\documentclass[12pt, reqno]{amsart}
\usepackage{amssymb,amsthm,amsfonts,amsmath}

\usepackage{hyperref}
\usepackage{mathrsfs}

\usepackage[dvips]{graphicx}

\newcommand\Y{\mathbb Y}
\newcommand\Z{\mathbb Z}
\newcommand\C{\mathbb C}
\newcommand\R{\mathbb R}

\newcommand\al{\alpha}

\newcommand\Ga{\Gamma}
\newcommand\de{\delta}

\newcommand\la{\lambda}
\newcommand\si{\sigma}
\newcommand\tsi{\widetilde\si}
\newcommand\epsi{\varepsilon}
\newcommand\om{\omega}

\newcommand\Conf{\operatorname{Conf}}
\newcommand\const{\operatorname{const}}
\newcommand\tr{\operatorname{tr}}

\newcommand\inv{\operatorname{inv}}
\newcommand\out{{\operatorname{\,out}}}
\newcommand\inn{{\operatorname{\,in}}}
\newcommand\sparse{{\text{\rm sparse}}}

\newcommand\wt{\widetilde}

\newcommand\LL{\mathcal L_{1|2}}

\newcommand\unP{\underline P\,}
\newcommand\unK{\underline K\,}

\newcommand\unX{\underline X}

\newcommand\X{\mathfrak X}

\newtheorem{theorem}{Theorem}[section]
\newtheorem{proposition}[theorem] {Proposition}

\newtheorem{corollary}[theorem]{Corollary}
\newtheorem{lemma}[theorem]{ Lemma}

\theoremstyle{definition}
\newtheorem{definition}[theorem]{Definition}
\newtheorem{remark}[theorem]{Remark}
\newtheorem{example}[theorem]{Example}

\numberwithin{equation}{section}

\newtheorem{claim}{Claim}
\newtheorem{problem}{Problem}

\begin{document}

\title[The quasi-invariance property]
{The quasi-invariance property for the Gamma kernel determinantal measure}

\author{Grigori Olshanski}
\address{Institute for Information Transmission Problems, Bolshoy
Karetny 19,  Moscow 127994, Russia; Independent University of Moscow, Russia}
\email{olsh2007@gmail.com}

\date{November 19, 2009}

\thanks{At various stages of work, the present
research was supported by the RFBR grants 07-01-91209 and 08-01-00110, by the
project SFB 701 (Bielefeld University), and by the grant ``Combinatorial
Stochastic Processes" (Utrecht University).}

\begin{abstract}
The Gamma kernel is a projection kernel of the form
$(A(x)B(y)-B(x)A(y))/(x-y)$, where $A$ and $B$ are certain functions on the
one-dimensional lattice expressed through Euler's $\Ga$-function. The Gamma
kernel depends on two continuous parameters; its principal minors  serve as the
correlation functions of a determinantal probability measure $P$ defined on the
space of infinite point configurations on the lattice. As was shown earlier
(Borodin and Olshanski, Advances in Math. 194 (2005), 141-202;
arXiv:math-ph/0305043), $P$ describes the asymptotics of certain ensembles of
random partitions in a limit regime.

Theorem: The determinantal measure $P$ is quasi-invariant with respect to
finitary permutations of the nodes of the lattice.

This result is motivated by an application to a model of infinite particle
stochastic dynamics.
\end{abstract}

\maketitle

\tableofcontents

\section*{Introduction}

\subsection{Preliminaries: a general problem}\label{0-1}
Recall a few well-known notions from measure theory. Let $\mathfrak A$ be a
Borel space (that is, a set with a distinguished sigma-algebra of subsets). Two
Borel measures $P_1, P_2$ on $\mathfrak A$ are said to be {\it equivalent\/} if
$P_1$ has a density with respect to $P_2$ and vice versa. They are said to {\it
disjoint\/} or {\it mutually singular\/} if there exist disjoint Borel subsets
$B_1$ and $B_2$ such that $P_1$ is supported by $B_1$ and $P_2$ is supported by
$B_2$ (that is, $P_1(\mathfrak A\setminus B_1)=P_2(\mathfrak A\setminus
B_2)=0$). Assume $G$ is a group acting on $\mathfrak A$ by Borel
transformations; then a Borel measure $P$ is said to be $G$-{\it
quasi-invariant\/} if $P$ is equivalent to its transform by any element $g\in
G$.

In practice, especially for measures living on ``large'' spaces, verifying the
property of equivalence, disjointness or quasi-invariance, and explicit
computation of densities (Radon--Nikod\'ym derivatives) for equivalent measures
can be a nontrivial task. There exist nice general results for particular
classes of measures: infinite product measures (Kakutani's theorem \cite{Ka}),
Gaussian measures on infinite-dimensional spaces (Feldman--Hajek's theorem and
related results, see \cite[Ch. II]{Kuo}), Poisson measures (see \cite{Bro}).

Assume that $\X$ is a locally compact space, take as the ``large'' space
$\mathfrak A$ the space $\Conf(\X)$ of locally finite point configurations on
$\X$, and assume that the measures under consideration are probability measures
on $\mathfrak A=\Conf(\X)$; they are also called {\it point processes\/} on
$\X$ (for fundamentals of point processes, see, e.g., \cite{Le}). Poisson
measures are just the simplest yet important example of point processes. The
next by complexity example is the class of {\it determinantal\/} measures
(processes). Determinantal measures are specified by their correlation kernels
which are functions $K(x,y)$ on $\X\times\X$. Note an analogy with covariation
kernels of Gaussian measures which are also functions in two variables. Note
also that, informally, Poisson measures can be viewed as a degenerate case of
determinantal measures corresponding to kernels $K(x,y)$ concentrated on the
diagonal $x=y$.

Many concrete examples of determinantal measures are furnished by random matrix
theory and other sources, see, e.g., the surveys \cite{So} and \cite{Bor}. The
interest to determinantal measures especially increased in the last years.
However, to the best of my knowledge, the following problem was never discussed
in the literature:

\begin{problem}\label{P1}
{\it Assume we are given two determinantal measures, $P_1$ and $P_2$ on a
common space $\Conf(\X)$. How to test their equivalence (or, on the contrary,
disjointness)? Is it possible to decide this by inspection of the respective
correlation kernels $K_1(x,y)$ and $K_2(x,y)$?}
\end{problem}

One could imagine that equivalence $P_1\sim P_2$ holds if the kernels are close
to each other in an appropriate sense. However, there is a subtlety here, see
Subsection \ref{1-6} below.

Let $G$ be a group of homeomorphisms $g:\X\to\X$. Then $G$ also acts, in a
natural way, on the space $\Conf(\X)$ and hence on the space of probability
measures on $\Conf(\X)$. Observe that the latter action preserves the
determinantal property: If $P$ is a determinantal measure on $\Conf(\X)$ with
correlation kernel $K(x,y)$, then the transformed measure $g(P)$ is
determinantal, too, and $K(g^{-1}x, g^{-1}y)$ serves as its correlation kernel.
Thus, the question of $G$-quasi-invariance of $P$ becomes a special instance of
Problem \ref{P1}:

\begin{problem}\label{P2}
{\it Let $P$ and $G$  be as above. How to test whether $P$ is
$G$-quasi-invariant? Is it possible to decide this by comparing the correlation
kernels $K(x,y)$ and $K(g^{-1}x, g^{-1}y)$ for $g\in G$?}
\end{problem}

I think it would be interesting to develop general methods for solving Problem
\ref{P2} and more general Problem \ref{P1}. They seem to be nontrivial even in
the case when $\X$ is a countably infinite set with discrete topology.

\subsection{The Gamma kernel measure}
In the present paper we are dealing with a concrete model of determinantal
measures, introduced in \cite{BO2}. The space $\X$ is assumed to be discrete
and countable; it is convenient to identify it with the lattice
$\Z':=\Z+\frac12$ of half--integers. Then the space $\Conf(\X)=\Conf(\Z')$ is
simply the space of all subsets of $\Z'$. We consider a two-parameter family of
kernels on $\Z'\times\Z'$. Following \cite{BO2}, we denote them as
$\unK_{z,z'}(x,y)$; here $z$ and $z'$ are some continuous parameters, and $x,y$
are the arguments, which range over $\Z'$. Each kernel is real-valued and
symmetric. Moreover, it is a projection kernel meaning that it corresponds to a
projection operator in the Hilbert space $\ell^2(\Z')$. Like many examples of
kernels from random matrix theory, our kernels can be written in the so-called
integrable form \cite{IIKS}, \cite{De}
$$
\frac{\mathcal A(x)\mathcal B(y)-\mathcal B(x)\mathcal A(y)}{x-y},
$$
resembling Christoffel--Darboux kernels associated to orthogonal polynomials.
In our situation $\mathcal A$ and $\mathcal B$ are certain functions on the
lattice $\Z'$, which are expressed through Euler's $\Ga$-function. For this
reason we call $\unK_{z,z'}(x,y)$ the {\it Gamma kernel\/}. In \cite{BO2} we
conjectured that the Gamma kernel might be a universal microscopic limit of the
Christoffel--Darboux kernels for generic discrete orthogonal polynomials, in an
appropriate asymptotic regime.

The Gamma kernel serves as the correlation kernel for a determinantal measure
on $\Conf(\Z')$, called the {\it Gamma kernel measure\/} and denoted as
$\unP_{z,z'}$. According to the general definition of determinantal measures
(see \cite{So}, \cite{Bor}), the measure $\unP_{z,z'}$ is characterized by its
correlation functions
$$
\rho_n(x_1,\dots,x_n):=\unP_{z,z'}\{X\in\Conf(\Z')\mid X\ni x_1,\dots,x_n\}
$$
which in turn are equal to principal $n\times n$ minors of the kernel:
$$
\rho(x_1,\dots,x_n):=\det[\unK_{z,z'}(x_i,x_j)]_{i,j=1}^n.
$$
Here $n=1,2,\dots$ and $x_1,\dots,x_n$ is an arbitrary  $n$-tuple
$x_1,\dots,x_n$ of pairwise distinct points from $\Z'$.

As shown in \cite{BO2}, the Gamma kernel measure arises from several models of
representation--theoretic origin, through certain limit transitions.

A more detailed information about $\unK_{z,z'}(x,y)$ and $\unP_{z,z'}$ is given
in Section \ref1 below, see also \cite{BO2}, \cite{Ol2}.

\subsection{The main result}
We take as $G$ the group $\mathfrak S$ of permutations of the set $\Z'$ fixing
all but finitely many points. Such permutations are said to be {\it
finitary\/}. Clearly, $\mathfrak S$ is a countable group. It is generated by
the elementary transpositions $\si_n$ of the lattice $\Z'$: Here $n\in\Z$ and
$\si_n$ transposes the points $n-\frac12$ and $n+\frac12$ of $\Z'$. Each
permutation $\si\in\mathfrak S$ induces, in a natural fashion, a transformation
of the space $\Conf(\Z')$, which in turn results in a transformation $P\mapsto
\si(P)$ of probability measures on $\Conf(\Z')$.

The main result of the paper says that the Gamma kernel measure is
quasi-invariant with respect to the action of the group $\mathfrak S$:

\medskip
\noindent{\bf Main Theorem.} {\it For any $\sigma\in\mathfrak S$, the measures
$\unP_{z,z'}$ and $\si(\unP_{z,z'})$ are equivalent. Moreover, the
Radon--Nikod\'ym derivative $\si(\unP_{z,z'})/\unP_{z,z'}$ can be explicitly
computed}.
\medskip

This result gives a solution to the first question of Problem \ref{P2} in a
concrete situation. As will be shown in another paper, the quasi-invariance
property established in the theorem  makes it possible to construct an
equilibrium Markov process on $\Conf(\Z')$ with determinantal dynamical
correlation functions and equilibrium distribution $\unP_{z,z'}$. This
application is one of the motivations of the present work. \footnote{A
connection between quasi-invariance and existence of  Markov dynamics,
sometimes in hidden form, is present in various situations. See, e.g.,
\cite{AKR}, \cite{SY}.}

It seems plausible that $\unP_{z,z'}$ is {\it not\/} quasi-invariant with
respect to the transformations of $\Conf(\Z')$ generated by the translations of
the lattice. Note that the translation $x\mapsto x+k$ with $k\in\Z$ amounts to
the shift $(z,z')\to(z-k,z'-k)$ of the parameters (see Theorem \ref{1.4}). One
can ask, more generally, whether any two Gamma kernel measures with distinct
parameters are disjoint. \footnote{The pair $(z,z')$ should be viewed as an
{\it unordered\/} pair of parameters, because the transposition
$z\leftrightarrow z'$ does not affect the measure, see Theorem \ref{1.4}.}

\subsection{Scheme of proof of Main Theorem}\label{0-4} The proof relies on
the fact that for fixed $z,z'$, the measure $\unP_{z,z'}$ can be approximated
by simpler measures which are $\mathfrak S$--quasiinvariant and whose
Radon--Nikod\'ym derivatives (with respect to the action of the group
$\mathfrak S$) are readily computable.

The approximating measures depend on an additional parameter $\xi\in(0,1)$ and
are denoted as $\unP_{z,z',\xi}$. These are purely atomic probability measures
supported by a single $\mathfrak S$--orbit. They come from certain probability
distributions on Young diagrams, and are called the {\it z--measures\/}
(Kerov--Olshanski--Vershik \cite{KOV}, Borodin--Olshanski \cite{BO1}). As $\xi$
goes to $1$, the measures $\unP_{z,z',\xi}$ weakly converge to $\unP_{z,z'}$:
this is simply the initial definition of $\unP_{z,z'}$ given in \cite{BO2}.

What we actually need to prove is that the convergence of the measures holds
not only in the weak topology (that is, on bounded continuous test functions)
but also in a much stronger sense: Namely,
$$
\langle F, \unP_{z,z',\xi}\rangle\, \underset{\xi\to1}{\longrightarrow}\,
\langle F, \unP_{z,z'}\rangle
$$
for certain test functions $F$ which, like the Radon--Nikod\'ym derivatives,
may be unbounded and not everywhere defined. Here and in the sequel the angular
brackets denote the pairing between functions and measures.

To explain this point more precisely we need some preparation.

First of all, it is convenient to transform all the measures in question by
means of an involutive homeomorphism of the compact space $\Conf(\mathfrak
\Z')$. This homeomorphism, denoted as ``$\inv$'', assigns to a configuration
$X\in\Conf(\Z')$ its symmetric difference with the set
$\Z'_-=\{\dots,-\frac32,-\frac12\}$.

An equivalent description is the following. Regard $X$ as a configuration of
charged particles occupying some of the sites of the lattice $\Z'$, while the
holes (that is, the unoccupied sites of $\Z'$) are interpreted as
anti--particles with opposite charge.  Now, the new configuration $\inv(X)$ is
formed by the particles sitting to the right of 0 and the anti--particles to
the left of 0. We call ``$\inv$'' the {\it particle/hole involution\/} on
$\Z'_-$.

For instance, if $X=\Z'_-$ then $\inv(X)=\varnothing$, the empty configuration.
The configuration $X=\Z'_-$ plays a distinguished role because the $\mathfrak
S$--orbit of this configuration is the support of the pre--limit measures
$\unP_{z,z',\xi}$. The map ``$\inv$'' transforms this distinguished orbit into
the set of all finite {\it balanced\/} configurations, that is, finite
configurations with equally many points to the right and to the left of $0$.

Note that the transform by ``$\inv$'' leaves intact the action of all the
elementary transpositions $\si_n$ with $n\ne0$, only the action of $\si_0$ is
perturbed.

Note also that if $\unP$ is a determinantal measure on $\Conf(\Z')$ then so is
its push--forward $P:=\inv(\unP)$, and there is a simple relation between the
correlation kernels of $\unP$ and $P$ (\cite[Appendix]{BOO}). If the kernel of
$\unP$ is symmetric then that of $P$ has a different kind of symmetry: it is
symmetric with respect to an indefinite inner product (see \cite[Proposition
2.3 and Remark 2.4]{BO1}), which reflects the presence of two kinds of
particles.

Instead of the measures $\unP_{z,z',\xi}$ and $\unP_{z,z'}$ we will deal with
their transforms by ``$\inv$'', denoted as
$P_{z,z',\xi}:=\inv(\unP_{z,z',\xi})$ and $P_{z,z'}:=\inv(\unP_{z,z'})$.
Clearly, the transform does not affect the formulation of the theorem, only the
initial action of the group $\mathfrak S$ on $\Conf(\Z')$ has to be conjugated
by the involution: an element $\si\in\mathfrak S$ now acts as the
transformation
\begin{equation}\label{f0.1}
\tsi:=\inv\circ\,\si\circ\inv.
\end{equation}

An advantage of the transformed measures as compared to the initial ones is
that the pre--limit measures $P_{z,z',\xi}$ live on finite configurations. In a
weaker form, this property is inherited by the limit measures. Namely, let us
say that a configuration $X\in\Conf(\Z')$ is {\it sparse\/} if
$$
\sum_{x\in X}|x|^{-1}<\infty.
$$
Denote the set of all sparse configurations as $\Conf_\sparse(\Z')$.  There is
a natural embedding $\Conf_\sparse(\Z')\hookrightarrow\ell^1(\Z')$ assigning to
a sparse configuration $X$ its characteristic function multiplied by the
function $|x|^{-1}$, and we equip $\Conf_\sparse(\Z')$ with the
``$\ell^1$--topology'', that is, the one induced by the norm of the Banach
space $\ell^1(\Z')$. The $\ell^1$--topology is finer than the topology induced
from the ambient space $\Conf(\Z')$.

Now we are in a position to describe the scheme of proof.

\begin{claim}\label{C1}
{\it The limit measures $P_{z,z'}$ are concentrated on the set of sparse
configurations.}
\end{claim}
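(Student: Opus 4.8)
The plan is to establish that the limit measure $P_{z,z'}$ assigns full mass to the set of sparse configurations by controlling the expected value of the $\ell^1$-functional $\sum_{x\in X}|x|^{-1}$ using the one-point correlation function. This is the standard first-moment criterion: since the quantity $\sum_{x\in X}|x|^{-1}$ is a nonnegative random variable, if its expectation under $P_{z,z'}$ is finite, then the functional is almost surely finite, which is exactly the statement that $P_{z,z'}(\Conf_{\sparse}(\Z'))=1$.

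Let me set this up. So the key computation is to show
$$
\Big\langle \sum_{x\in X}|x|^{-1},\,P_{z,z'}\Big\rangle \;=\; \sum_{x\in\Z'} |x|^{-1}\,\rho_1(x)\;<\;\infty,
$$
where $\rho_1(x)=K(x,x)$ is the diagonal value of the correlation kernel of $P_{z,z'}$ (i.e.\ the transformed kernel obtained from $\unK_{z,z'}$ by the involution ``$\inv$''). The interchange of summation and expectation is justified by the monotone convergence theorem, since all terms are nonnegative. Because $|x|^{-1}\sim |x|^{-1}$ is only borderline summable, the finiteness of the sum hinges entirely on the rate of decay of $\rho_1(x)$ as $|x|\to\infty$. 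So the first reduction I would carry out is to replace the statement about the whole configuration by this single inequality on the density.

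The main work is then the asymptotic analysis of the diagonal correlation $\rho_1(x)$. I would appeal to the explicit form of the Gamma kernel: since $\unK_{z,z'}(x,y)$ is built from the functions $\mathcal A,\mathcal B$ expressed through $\Ga$-functions, its diagonal values admit an explicit expression, and after applying the particle/hole involution ``$\inv$'' (which for the density amounts to replacing $\rho_1(x)$ by $1-\unK_{z,z'}(x,x)$ on the negative half-line $\Z'_-$ and leaving it unchanged on the positive half-line) one obtains a closed formula for the transformed $\rho_1(x)$. I expect the decay to be of order $|x|^{-2}$, governed by the asymptotics of the ratio of $\Ga$-functions, which would make $\sum_x |x|^{-1}\rho_1(x)$ convergent with room to spare. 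I would extract this decay from the known asymptotic behavior of $\unK_{z,z'}(x,x)$ recorded in the references \cite{BO2}, \cite{Ol2}.

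\emph{The hard part} will be controlling the behavior near the origin on \emph{both} sides and, more subtly, making sure the involution does not destroy the needed decay. On the positive half-line the density $\rho_1(x)=\unK_{z,z'}(x,x)$ decays, but on the negative half-line the involution produces $1-\unK_{z,z'}(x,x)$, so I must verify that $\unK_{z,z'}(x,x)\to 1$ fast enough as $x\to-\infty$ that the \emph{hole} density $1-\unK_{z,z'}(x,x)$ is itself of order $|x|^{-2}$. This asymmetric behavior — particle density small for large positive $x$, hole density small for large negative $x$ — is precisely the ``balanced'' structure that the involution is designed to exploit, and checking that both tails contribute finitely to $\sum_x |x|^{-1}\rho_1(x)$ is where the genuine estimate lies; the contribution from the finitely many nodes near $0$ is automatically finite and requires no delicate argument.
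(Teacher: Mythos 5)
Your proposal follows essentially the same route as the paper: the first-moment criterion (Proposition \ref{2.2} with $r(x)=|x|$) combined with the decay of the density function read off from the explicit Gamma-kernel formulas (Corollary \ref{1.6}), including the correct observation that on $\Z'_-$ the involution forces you to control the hole density $1-\unK_{z,z'}(x,x)$, which the paper handles via the symmetry $(z,z')\to(-z,-z')$. One correction: the density of $P_{z,z'}$ decays only like $C(z,z')/|x|$, not like $|x|^{-2}$ as you anticipate --- on the diagonal the Gamma kernel behaves as $\const\cdot(\psi(z+x+\tfrac12)-\psi(z'+x+\tfrac12))\sim (z-z')\cdot\const/x$ --- but this slower rate still makes $\sum_x |x|^{-1}\rho_1(x)$ comparable to $\sum_x |x|^{-2}$, hence finite, so your argument goes through unchanged.
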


The claim makes sense because the set $\Conf_\sparse(\Z')$ is a Borel subset in
$\Conf(\Z')$.

Given a function $f$ on the lattice $\Z'$ such that $|f(x)|=O(|x|^{-1})$, we
define a function $\Phi_f(X)$ on the set of sparse configurations by the
formula
\begin{equation}\label{f0.mult}
\Phi_f(X)=\prod_{x\in X}(1+f(x))
\end{equation}
(the product is convergent). Such functions $\Phi_f$ will be called {\it
multiplicative functionals\/} on configurations.  Any multiplicative functional
$\Phi_f$ is continuous in the $\ell^1$--topology.

Given a permutation $\si\in\mathfrak S$ and a measure $P$ on $\Conf(\Z')$, we
denote by $\tsi(P)$ the push--forward of $P$ under the transformation $\tsi$,
see \eqref{f0.1}.

Let $z,z'$ be fixed and $\xi$ range over $(0,1)$. For any $\si\in\mathfrak S$,
let $\mu_{z,z',\xi}(\si,X)$ be the Radon--Nikod\'ym derivative of the measure
$\tsi(P_{z,z',\xi})$ with respect to the measure $P_{z,z',\xi}$. That is,
$$
\mu_{z,z',\xi}(\si,X)=\frac{\tsi(P_{z,z',\xi})(X)}{P_{z,z',\xi}(X)}
=\frac{P_{z,z',\xi}(\tsi^{-1}(X))}{P_{z,z',\xi}(X)};
$$
here $X$ belongs to the countable set of finite balanced configurations.

\begin{claim}\label{C2}
{\it Fix an arbitrary $\si\in\mathfrak S$.

{\rm(i)} The function $\mu_{z,z',\xi}(\si,X)$ has a unique extension to a
continuous function on $\Conf_\sparse(\Z')$.

{\rm(ii)} As $\xi\to1$, the extended functions obtained in this way converge
pointwise to a continuous function $\mu_{z,z'}(\si,X)$ on $\Conf_\sparse(\Z')$.

{\rm(iii)} The limit function $\mu_{z,z'}(\si,X)$ can be written as a finite
linear combination of multiplicative functionals of the form \eqref{f0.mult}.}
\end{claim}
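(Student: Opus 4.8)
My plan is to reduce everything to the elementary transpositions $\si_n$ and then read off the densities from the explicit weights of the z--measures, the inv involution being exactly what converts those densities into multiplicative functionals. First I would record the cocycle identity
$$
\mu_{z,z',\xi}(\si'\si'',X)=\mu_{z,z',\xi}(\si',X)\,\mu_{z,z',\xi}(\si'',\wt{\si'}{}^{-1}X),
$$
valid on the support of $P_{z,z',\xi}$ and immediate from the definition of the Radon--Nikod\'ym derivative together with $\widetilde{\si'\si''}=\wt{\si'}\,\wt{\si''}$. Let $\mathcal M$ be the set of finite linear combinations of multiplicative functionals \eqref{f0.mult}. Then $\mathcal M$ is closed under multiplication, because $\Phi_f\Phi_g=\Phi_{f+g+fg}$ and $|f+g+fg|=O(|x|^{-1})$ when $f$ and $g$ have this decay; it contains the one--site occupation indicators $\mathbf 1[x_0\in X]$, each of which is an affine combination of the constant $1$ and a multiplicative functional supported at $x_0$; and it is stable under $X\mapsto\wt\si{}^{-1}X$, since $\wt\si$ changes the occupation of only finitely many sites. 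Each $\wt\si$ is, moreover, a homeomorphism of $\Conf_\sparse(\Z')$ for the $\ell^1$--topology and does not depend on $\xi$, so both continuity on $\Conf_\sparse(\Z')$ and pointwise convergence as $\xi\to1$ survive the three operations just listed. Because the $\si_n$ generate $\mathfrak S$, the cocycle identity then reduces claims (i)--(iii) to the case of a single generator $\si_n$.

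Next I would compute $\mu_{z,z',\xi}(\si_n,\cdot)$ by hand. Undoing the inv involution and using the Maya encoding of partitions, $\wt{\si_n}$ amounts to adjoining or deleting one box of content $n$ (for $n=0$ this is the diagonal box, which is why $\wt{\si_0}$ is the perturbed transposition and alters the occupations at both $\pm\frac12$). On the finite balanced configurations the density equals $1$ when the sites $n\pm\frac12$ carry equal occupations, and otherwise equals a ratio $M_{z,z',\xi}(\la')/M_{z,z',\xi}(\la)$ with $\la'=\la\pm\square$. Substituting the hook--content product formula for the z--measure weights, the content factors produce the explicit constants $\xi^{\pm1}(z+n)^{\pm1}(z'+n)^{\pm1}$, while the hook factors produce a product of terms of the form $(h/(h+1))^{\pm2}$ over the arm and the leg of the adjoined box. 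In Maya terms this splits into a product over the holes lying on one side of $n$ and the particles lying on the other; and here the inv involution does its work, for under ``$\inv$'' these holes and particles together become the points of the transformed configuration $X$, so the hook product is literally a multiplicative functional $\Phi_{f_n^\xi}(X)$ with $|f_n^\xi(x)|=O(|x|^{-1})$. Assembling the cases by means of the one--site indicators exhibits $\mu_{z,z',\xi}(\si_n,\cdot)$ as a member of $\mathcal M$, which settles (i) and (iii) before the limit; uniqueness of the continuous extension follows from the density of the finite balanced configurations in $\Conf_\sparse(\Z')$.

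For part (ii) I would pass to the limit $\xi\to1$. The constant prefactors converge immediately, and I would show that $f_n^\xi\to f_n$ with a bound $|f_n^\xi(x)|\le C|x|^{-1}$ uniform in $\xi$, where $f_n$ has the same decay; the absolutely convergent products then give $\Phi_{f_n^\xi}\to\Phi_{f_n}$ pointwise on $\Conf_\sparse(\Z')$. Hence $\mu_{z,z',\xi}(\si_n,\cdot)\to\mu_{z,z'}(\si_n,\cdot)\in\mathcal M$ pointwise, and the reduction of the first paragraph carries (i)--(iii) back up to all of $\mathfrak S$.

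The principal obstacle I foresee is the combinatorial core of the second paragraph: rewriting the ratio of hook--length products as an honest multiplicative functional and extracting the decay $|f_n^\xi(x)|=O(|x|^{-1})$ uniformly in $\xi$. Both the convergence of the infinite product (which rests on sparseness) and the limit $\xi\to1$ depend on this decay, so the telescoping of the hook factors must be organized to display it, rather than merely to hold for fixed finite $\la$. The exceptional generator $\wt{\si_0}$, which inserts a diagonal box and thereby adds a point on each side of $0$, will have to be verified on its own.
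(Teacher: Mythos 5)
Your proposal is correct and follows the same global strategy as the paper (Proposition \ref{3.1} and the discussion after it): the cocycle identity reduces everything to the generators $\si_n$, the class $\mathcal M$ of linear combinations of multiplicative functionals is closed under products, cylinder corrections and precomposition with $\tsi^{-1}$, and the density for a single $\si_n$ is computed explicitly and recognized as an element of $\mathcal M$ with the required $O(|x|^{-1})$ decay. The one substantive divergence is exactly at the point you flag as your ``principal obstacle'': you propose to telescope the hook--length products in $M_{z,z',\xi}(\la\pm\square)/M_{z,z',\xi}(\la)$, whereas the paper avoids hooks altogether by first rewriting the weight $P_{z,z',\xi}(X)$ in closed multiplicative form in the modified Frobenius coordinates (formula \eqref{f3.6}, based on the expression for $\dim\la/|\la|!$ in terms of the $p_i,q_j$ from \cite{Ol1}). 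With \eqref{f3.6} the ratio for $\si_n$ is literally a one--line cancellation, and the product over the points of $X$ with factors $1+O(|x|^{-1})$ is manifest --- no reorganization of hook factors, and no bookkeeping of the finitely many holes/particles near the origin that your Maya--diagram identification would otherwise have to absorb into the one--site indicators. Your route does work (the hook multiset of $\la$ is $\{x-y\}$ over particle--hole pairs of the Maya diagram with $y<x$, and moving one particle changes it by a product over the remaining particles and holes), but \eqref{f3.6} is the device that dissolves the difficulty. Finally, note a simplification you missed in part (ii): the dimension ratio and the Pochhammer ratios are independent of $\xi$, so $\xi$ enters $\mu_{z,z',\xi}(\si_n,\cdot)$ only through the prefactor $\xi^{\pm1}$; your functions $f_n^\xi$ are in fact $\xi$--free, the uniform--in--$\xi$ bound is vacuous, and the limit in (ii) amounts to setting $\xi=1$ in that prefactor, which is how the paper defines $\mu_{z,z'}(\si,X)$ in Definition \ref{3.3}.
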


Here continuity is assumed with respect to the $\ell^1$--topology. Actually, a
somewhat stronger claim holds, see Proposition \ref{3.1} and the subsequent
discussion.

Claim \ref{C2} suggests that the limit function $\mu_{z,z'}(\si,X)$ might serve
as the Radon--Nikod\'ym derivative for the limit measure, that is,
$$
\tsi(P_{z,z'})=\mu_{z,z'}(\si,\,\cdot\,)P_{z,z'}.
$$

This relation is indeed true. We reduce it to the following claim.

\begin{claim}\label{C3} {\it Let $f$ be an arbitrary  function on $\Z'$ such that
$|f(x)|=O(|x|^{-1})$. Then the multiplicative functional $\Phi_f$ given by
\eqref{f0.mult} is absolutely integrable with respect to both the pre--limit
and limit measures, and we have}
$$
\lim_{\xi\to1}\langle\Phi_f,P_{z,z',\xi}\rangle=\langle\Phi_f,P_{z,z'}\rangle.
$$
\end{claim}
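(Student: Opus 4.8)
The plan is to represent both pairings as Fredholm determinants of the correlation kernels and to reduce the desired limit to a convergence of these determinants. Recall that for a determinantal point process on $\Z'$ with correlation kernel $K$ the expectation of a multiplicative functional admits the expansion
\[
\langle\Phi_f,P\rangle=\sum_{n=0}^\infty\frac1{n!}\sum_{\substack{x_1,\dots,x_n\in\Z'\\ \text{distinct}}}f(x_1)\cdots f(x_n)\,\det\big[K(x_i,x_j)\big]_{i,j=1}^n,
\]
which, when it converges absolutely, equals the Fredholm determinant $\det(1+fK)$ in $\ell^2(\Z')$. First I would apply this to the pre--limit kernels $K_{z,z',\xi}$ (here the computation is essentially finite, since these measures sit on finite configurations) and to the limit kernel $K_{z,z'}$.

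For absolute integrability, and hence well--definedness of the determinants, I would bound the series with $f$ replaced by $|f|$. Using the particle/hole description one passes to the genuine projection kernel $\unK$ of the original picture on the half--line $x>0$ and to its complement $1-\unK$ on $x<0$; on each sector Hadamard's inequality for positive semidefinite matrices gives $\det[K(x_i,x_j)]\le\prod_i K(x_i,x_i)$, so that the $n$--th term of the $|f|$--series is dominated by $\frac1{n!}\big(\sum_x|f(x)|\,K(x,x)\big)^n$. The total series then converges to $\exp\big(\sum_x|f(x)|K(x,x)\big)$, which is finite because $\sum_x|f(x)|\,K(x,x)\le C\sum_x|x|^{-1}K(x,x)<\infty$, the last sum being finite precisely by the estimate underlying Claim \ref{C1} (concentration on sparse configurations). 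The same bound, applied uniformly in $\xi$, will also supply the domination needed for the limit.

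To obtain the limit itself I would show that $|f|^{1/2}K_{z,z',\xi}\,|f|^{1/2}\to|f|^{1/2}K_{z,z'}\,|f|^{1/2}$ in trace norm as $\xi\to1$, after which continuity of the Fredholm determinant on the trace ideal closes the argument. The two ingredients are the entrywise convergence $K_{z,z',\xi}(x,y)\to K_{z,z'}(x,y)$, which is already contained in the construction of $P_{z,z'}$ as a limit of the $z$--measures, and a uniform tail estimate: the off--diagonal decay of the Gamma kernel together with the diagonal bound above should let one truncate to a finite box with an error small uniformly in $\xi$, thereby reducing trace--norm convergence to convergence of finitely many matrix entries.

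The main obstacle is this last step: upgrading the entrywise (pointwise) convergence of the kernels to trace--norm convergence of the weighted operators, uniformly as $\xi\to1$. This is exactly where the explicit expressions of $K_{z,z',\xi}$ and $K_{z,z'}$ through Euler's $\Ga$--function are required: one must produce tail bounds $|K_{z,z',\xi}(x,y)|\le\Psi(x,y)$, valid uniformly for $\xi$ near $1$, with $\sum_{x,y}|f(x)|^{1/2}\Psi(x,y)|f(y)|^{1/2}<\infty$, so that the truncation error and the limit may be interchanged. I also anticipate a minor but genuine bookkeeping point coming from the $J$--Hermitian (indefinite--metric) symmetry of the involuted kernels: both the Hadamard domination and the trace--norm estimates are cleanest when carried out through the genuine projection $\unK$ and its complement, and one has to check that this particle/hole decomposition does not spoil the uniformity in $\xi$.
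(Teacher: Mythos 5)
Your overall strategy --- express both pairings as Fredholm determinants of the weighted correlation operators and pass to the limit there --- is exactly the paper's (Proposition \ref{2.7} together with Theorems \ref{4.2} and \ref{4.3}). But two of your steps do not go through as stated. First, the Hadamard bound $\det[K(x_i,x_j)]\le\prod_i K(x_i,x_i)$ is false for the kernel $K$ of the involuted measure $P_{z,z'}$: that kernel is only $J$--Hermitian, and when the set $\{x_1,\dots,x_n\}$ meets both $\Z'_+$ and $\Z'_-$ the matrix $[K(x_i,x_j)]$ is not positive semidefinite, so the determinant is not controlled by its diagonal. A two--site example: for the rank--one projection onto $(1,1)/\sqrt2$, the probability of ``particle at site $1$, hole at site $2$'' is $\tfrac12$, while the product of the corresponding one--point densities is $\tfrac14$; the mixed $2\times2$ determinant exceeds the product of its diagonal entries. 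You cannot ``apply Hadamard on each sector'' because the determinant does not decouple across the sectors. (A usable substitute is $\rho(X')\le\min\bigl(\det[\unK]_{X'_+},\det[1-\unK]_{X'_-}\bigr)\le\prod_{x\in X'}\rho_1(x)^{1/2}$, which restores summability since $|f(x)|\rho_1(x)^{1/2}=O(|x|^{-3/2})$; the paper instead gets integrability directly from Proposition \ref{2.7} once $A_hK_{z,z'}A_h\in\LL(H)$ is known.)

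The more serious gap is the main step. You aim for trace--norm convergence of the full weighted operator via a uniform entrywise dominating kernel $\Psi$ with $\sum_{x,y}|x|^{-1/2}\Psi(x,y)|y|^{-1/2}<\infty$. No such $\Psi$ exists: the integrable form of the Gamma kernel gives only $|K_{z,z'}(x,y)|=O(|x-y|^{-1})$ (up to bounded factors, for the principal series), and the double sum $\sum_{x\ne y}|x|^{-1/2}|y|^{-1/2}|x-y|^{-1}$ already diverges logarithmically (take $y=x+a$ with $1\le a\le x$). So truncation to a finite box cannot have uniformly small trace--norm error, and entrywise convergence cannot be upgraded this way. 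This is precisely the obstruction the paper is organized around: it proves trace--norm convergence only for the diagonal blocks, exploiting that $(A_hK_{z,z',\xi}A_h)_{++}$ is \emph{nonnegative} (so weak convergence plus convergence of traces suffices, Propositions \ref{5.1}, \ref{5.2}, \ref{5.4}), and for the off--diagonal blocks it proves only Hilbert--Schmidt convergence (Propositions \ref{6.2}, \ref{6.3}), where the $|x-y|^{-2}$ decay does make the relevant double sum converge. That weaker combination is enough because the Fredholm determinant extends continuously to the algebra $\LL(H)$. The author explicitly states that trace--norm convergence of the whole operator is not known, so the target of your final step may not even be true; you should replace it by the block decomposition and the $\LL(H)$--topology.
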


This claim is stronger than the assertion about the weak convergence of
measures $P_{z,z',\xi}\to P_{z,z'}$ which was known previously. Indeed, weak
convergence of measures on $\Conf(\Z')$ means convergence on continuous test
functions, while multiplicative functionals are, generally speaking, unbounded
functions on $\Conf_\sparse(\Z')$ and thus cannot be extended to continuous
functions on the compact space $\Conf(\Z')$.

To prove Claim \ref{C3} we use the well--known fact that the expectation of a
multiplicative functional with respect to a determinantal measure can be
expressed as a Fredholm determinant involving the correlation kernel. This
makes it possible to reformulate the claim  in terms of the correlation
operators $K_{z,z', \xi}$ and $K_{z,z'}$ (these are operators in the Hilbert
space $\ell^2(\Z')$ whose matrices are the correlation kernels of the measures
$P_{z,z',\xi}$ and $P_{z,z'}$, respectively).

The reformulation is given in Claim \ref{C4} below. Represent the Hilbert space
$H:=\ell^2(\Z')$ as the direct sum of two subspaces $H_{\pm}=\ell^2(\Z'_\pm)$
according to the splitting $\Z'=\Z'_+\sqcup\Z'_-$ (positive and negative
half--integers). Then any bounded operator in $H$ can be written as a
$2\times2$ matrix with operator entries (or ``blocks''). Let $\LL(H)$ denote
the set (actually, algebra) of bounded operators in $H$ whose two diagonal
blocks are trace class operators and two off--diagonal blocks are
Hilbert--Schmidt operators. If $\mathcal K\in\LL(H)$ then the Fredholm
determinant $\det(1+\mathcal K)$ makes sense (\cite[Appendix]{BOO}). We equip
$\LL(H)$ with the {\it combined topology\/} determined by the trace class norm
on the diagonal blocks and the Hilbert--Schmidt norm on the off--diagonal ones.

\begin{claim}\label{C4}
{\it Let $A$ stand for the operator of pointwise multiplication
by the function $x\mapsto|x|^{-1/2}$ in the space $H$. The operator
$AK_{z,z'}A$ lies in $\LL(H)$, and,  as $\xi$ goes to $1$, the operators
$AK_{z,z',\xi}A$ approach the operator $AK_{z,z'}A$ in the combined topology of
$\LL(H)$.}
\end{claim}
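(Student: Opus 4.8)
The plan is to prove Claim \ref{C4} by establishing two things: first that the limit operator $AK_{z,z'}A$ actually belongs to $\LL(H)$, and second that the pre-limit operators $AK_{z,z',\xi}A$ converge to it in the combined topology. Both reduce to concrete estimates on the Gamma kernel $\unK_{z,z'}(x,y)$ and its pre-limit analogue, transported through the particle/hole involution ``$\inv$''. I would begin by recording the explicit integrable-form expressions for the kernels $K_{z,z',\xi}$ and $K_{z,z'}$ (as recalled in Section \ref1). The conjugation by $A = $ multiplication by $|x|^{-1/2}$ replaces the matrix entry $K(x,y)$ by $|x|^{-1/2}K(x,y)|y|^{-1/2}$, so the four blocks of $AK_{z,z'}A$ are indexed by the sign combinations of $(x,y)\in\Z'\times\Z'$, and the weights $|x|^{-1/2}|y|^{-1/2}$ are precisely what one expects to convert the projection-kernel decay into summability.

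For membership in $\LL(H)$, I would treat the diagonal and off-diagonal blocks separately. For the off-diagonal blocks (say $x>0$, $y<0$), I need the Hilbert--Schmidt norm $\sum_{x>0,\,y<0}|x|^{-1}|y|^{-1}|\unK_{z,z'}(x,y)|^2$ to be finite; here the decay of the Gamma kernel together with the two extra factors $|x|^{-1}$, $|y|^{-1}$ should give convergence of the double sum. The diagonal blocks are more delicate because I must control the \emph{trace class} norm, not just the Hilbert--Schmidt norm. The natural route is to exploit the fact that $K_{z,z'}$ comes from a projection operator and to use the integrable structure: writing the kernel as $(\mathcal A(x)\mathcal B(y)-\mathcal B(x)\mathcal A(y))/(x-y)$ lets one factor a diagonal block through the functions $\mathcal A$ and $\mathcal B$ weighted by $|x|^{-1/2}$, and I would estimate the trace norm by bounding these weighted sequences in $\ell^2$ and controlling the $(x-y)^{-1}$ factor. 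This is where I expect the main obstacle to lie: obtaining a genuine trace-class (not merely Hilbert--Schmidt) bound on the diagonal blocks requires more than crude entrywise estimates, and one typically needs either a clean factorization of each diagonal block as a product of two Hilbert--Schmidt operators or a direct summation of singular values using the special structure of $\mathcal A,\mathcal B$ expressed through $\Ga$-functions.

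For the convergence statement, the key input is the known pointwise (entrywise) convergence $\unK_{z,z',\xi}(x,y)\to\unK_{z,z'}(x,y)$ as $\xi\to1$, which underlies the previously established weak convergence. I would upgrade this to convergence in the combined norm by combining pointwise convergence of the weighted matrix entries with a dominated-convergence argument: I would produce $\xi$-uniform bounds $|\unK_{z,z',\xi}(x,y)|\le C(x,y)$, with $C$ summable against the weights in the appropriate (trace or Hilbert--Schmidt) sense on each block, and then conclude that the block norms of the differences tend to $0$. Concretely, for the Hilbert--Schmidt off-diagonal blocks this is a direct dominated-convergence argument in $\ell^2$ of the product lattice; for the trace-class diagonal blocks I would again pass through the factorization into Hilbert--Schmidt pieces, so that trace-norm convergence follows from Hilbert--Schmidt convergence of the factors. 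The uniform domination in $\xi$ is the technically demanding part, since it requires control of the Gamma-type functions $\mathcal A_\xi,\mathcal B_\xi$ that is uniform as $\xi\to1$; but once such uniform estimates are in hand, the convergence in the combined topology of $\LL(H)$ follows from the triangle inequality applied block by block.
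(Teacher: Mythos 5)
There is a genuine gap at the central point of the claim: you never actually produce a mechanism for controlling the \emph{trace-class} norm of the diagonal blocks, and the route you gesture at is not the workable one. You propose to ``factor a diagonal block through the functions $\mathcal A$ and $\mathcal B$'' from the integrable form $(\mathcal A(x)\mathcal B(y)-\mathcal B(x)\mathcal A(y))/(x-y)$; but the Christoffel--Darboux structure does not give a factorization of the block into a product of two Hilbert--Schmidt operators, precisely because of the $(x-y)^{-1}$ denominator, and you explicitly leave this ``main obstacle'' unresolved. The device the paper uses is different and much simpler: since $\unK_{z,z',\xi}$ and $\unK_{z,z'}$ are (limits of) orthogonal projections, their diagonal blocks are \emph{nonnegative} operators, hence so are $(A\,K\,A)_{++}$ and $(A\,K\,A)_{--}$. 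For a nonnegative operator, trace-class membership is equivalent to finiteness of the trace $\sum_x |x|^{-1}K(x,x)$ (which follows from the $|x|^{-1}$ decay of the diagonal, Corollary \ref{1.6}), and convergence in trace norm is equivalent to weak convergence together with convergence of traces (\cite[Proposition A.9]{BOO}). This positivity argument is what replaces the factorization you were looking for; without it, or without an equally effective substitute such as the identity $A(K)_{++}A=(AE_+K)(KE_+A)$ coming from $K^2=K$, your plan for the diagonal blocks does not close.

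A second, smaller issue is that essentially all of the paper's technical content is deferred in your proposal to ``once such uniform estimates are in hand.'' The dominated-convergence strategy you describe for the off-diagonal blocks is reasonable in outline (the paper instead proves convergence of the Hilbert--Schmidt \emph{norms} and combines it with entrywise convergence, using the Hilbert-space structure of the Hilbert--Schmidt class), but in either formulation the real work is the uniform-in-$\xi$ control of the kernels near $\xi=1$. The paper obtains this from double contour integral representations, with a nontrivial change of variables $\om=\xi^{-1/2}+(1-\xi)u$ and separate estimates on the interior and exterior parts of the contours; moreover, for the $+-$ block the representation itself must first be modified (via $\om_2\mapsto\om_2^{-1}$) because the original one produces divergent series when $y$ ranges over $\Z'_-$. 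None of this is visible in your outline, so even granting the corrected treatment of the diagonal blocks, the proposal is a plan rather than a proof.
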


Note that the operator $A^2$ {\it is not\/} in $\LL(H)$, because the series
$\sum |x|^{-1}$ taken over all $x\in\Z'$ is divergent. This is the source of
difficulties. For instance, the assertion that $AK_{z,z'}A$ belongs to $\LL(H)$
is not a formal consequence of the boundedness of $K_{z,z'}$.

Claim \ref{C4} is the key technical result of the paper. The proof relies on
explicit expressions for the correlation kernels in terms of contour integrals
and requires a considerable computational work.

I do not know whether the operators $AK_{z,z',\xi}A$ approach  $AK_{z,z'}A$
simply in the trace class norm. The point is that the diagonal blocks are
Hermitian nonnegative operators while the operators themselves are not. For
nonnegative operators, one can use the fact that the convergence in the trace
class norm is equivalent to the weak convergence together with the convergence
of traces. For non--Hermitian operators, dealing with the trace class norm is
difficult, while the Hilbert--Schmidt norm turns out to be much easier to
handle. Fortunately, for the off--diagonal blocks, the convergence in the
Hilbert--Schmidt norm already suffices.

\subsection{Organization of the paper}
Section \ref{1} contains the basic notation and definitions related to the
measures under consideration and their correlation kernels. Section \ref{2}
starts with basic facts related to multiplicative functionals and their
connection to Fredholm determinants; then the proof of Claim \ref{C1} follows;
it is readily derived from the explicit expression for the first correlation
function of the measure $\unP_{z,z'}$. Section \ref{3} is devoted to the proof
of Claim \ref{C2}. In Section \ref{4}, we formulate the main result (Theorem
\ref{4.1}). Then we reduce to it to Theorem \ref{4.2} and next to Theorem
\ref{4.3}; they correspond to Claims \ref{C3} and \ref{C4}, respectively. The
main technical work is done in Sections \ref{5} and \ref{6}, where we prove
Theorem \ref{4.3} (or Claim \ref{C4}) separately for diagonal and off--diagonal
blocks.

\subsection{Acknowledgment}
I am very much indebted to Alexei Borodin for a
number of important suggestions which helped me in dealing with asymptotics of
contour integral representations. I am also grateful to Leonid Petrov and
Sergey Pirogov for valuable remarks.

\section{Z--measures and related objects}\label{1}

\subsection{Partitions and lattice point configurations}\label{1-1} A {\it
partition\/} is an infinite sequence $\la=(\la_1,\la_2,\dots)$ of nonnegative
integers $\la_i$ such that $\la_i\ge\la_{i+1}$ and only finitely many $\la_i$'s
are nonzero. We set $|\la|=\sum\la_i$. Let $\Y$ denote the set of all
partitions; it is a countable set. Following \cite{Ma}, we identify partitions
and Young diagrams.

Let $\Z'$ denote the set of all half--integers; that is, $\Z'=\Z+\frac12$. By
$\Z'_+$ and $\Z'_-$ we denote the subsets of positive and negative
half--integers, so that $\Z'$ is the disjoint union of $\Z'_+$ and $\Z'_-$.

Subsets of $\Z'$ are viewed as {\it configurations of particles\/} occupying
the nodes of the lattice $\Z'$. The unoccupied nodes are called {\it holes\/}.
Let $\Conf(\Z')$ denote the space of all particle configurations on $\Z'$. The
space $\Conf(\Z')$ can be identified with the infinite product space
$\{0,1\}^{\Z'}$ and we equip it with the product topology. In this topology,
$\Conf(\Z')$ is a totally disconnected compact space.

Recall (see Subsection \ref{0-4}) that the {\it particle/hole involution\/} on
$\Z'_-$ is the involutive map $\Conf(\Z')\to\Conf(\Z')$ keeping intact
particles and holes on $\Z'_+\subset\Z'$ and changing particles by holes and
vice versa on $\Z'_-\subset\Z'$. We denote the particle/hole involution by the
symbol ``$\inv$''. In a more formal description, ``$\inv$'' assigns to a
configuration its symmetric difference with $\Z'_-$. In particular,
$\inv(\Z'_-)=\varnothing$.

To a partition $\la\in\Y$ we assign the semi--infinite point configuration
$$
\unX(\la)=\{\la_i-i+\tfrac12\}_{i=1,2,\dots}\in\Conf(\Z').
$$
Note that among $\la_i$'s some terms may repeat while the numbers
$\la_i-i+\frac12$ are all pairwise distinct. Clearly, the correspondence
$\la\mapsto\unX(\la)$ is one--to--one. The configuration $\unX(\la)$ is
sometimes called the {\it Maya diagram\/} of $\la$, see Miwa--Jimbo--Date
\cite{MJD}.

For instance, the Maya diagram of the zero partition $\la=(0,0,\dots)$ is
$\Z'_-$. Any Maya diagram can be obtained from this one by finitely many
elementary moves consisting in shifting one particle to the neighboring
position on the right provided that it is unoccupied.

A {\it finite\/} configuration $X\subset\Z'$ is called {\it balanced\/} if
$|X\cap\Z'_+|=|X\cap\Z'_-|$. An important fact is that ``$\inv$'' establishes a
bijective correspondence between the Maya diagrams $\unX(\la)$ and the balanced
configurations. We set
$$
X(\la)=\inv(\unX(\la)), \qquad X_{\pm}(\la)=X(\la)\cap\Z'_\pm.
$$

An alternative interpretation of the balanced configuration $X(\la)$ is as
follows: $X(\la)=X_+(\la)\cup X_-(\la)$ with
$$
X_+(\la)=\{p_1<\dots<p_d\}, \quad X_-(\la)=\{-q_d<\dots<-q_1\}
$$
(recall that $|X_+(\la)|=|X_-(\la)|$), where the positive half--integers $p_i$
and $q_i$ are the {\it modified Frobenius coordinates\/} (Vershik--Kerov
\cite{VK}) of the Young diagram $\la$. They differ from the conventional
Frobenius coordinates \cite{Ma} by the additional summand $\frac12$. A slight
divergence with the conventional notation is that we arrange the coordinates in
the ascending order.

A direct explanation: $d$ is the number of diagonal boxes in $\la$ and
$$
\aligned (\la_1-1+\tfrac12,\la_2-2+\tfrac12,\dots,\la_d-d+\tfrac12)
&=(p_d,p_{d-1}\dots,p_1)\\
(\la'_1-1+\tfrac12,\la'_2-2+\tfrac12,\dots,\la'_d-d+\tfrac12)&=(q_d,q_{d-1},\dots,q_1),
\endaligned
$$
where $\la'$ is the transposed diagram.

Thus, we have defined two embeddings of the countable set $\Y$ into the space
$\Conf(\Z')$, namely, $\la\mapsto\unX(\la)$ and $\la\mapsto X(\la)$. These two
embeddings are related to each other by the particle/hole involution on
$\Z'_-$.

Note that each of the two embeddings maps $\Y$ onto a dense subset in
$\Conf(\Z')$.

\subsection{Z-measures on partitions}\label{1-2}Here we introduce a family
$\{M_{z,z',\xi}\}$ of probability measures on $\Y$, called the {\it
z--measures\/}. The subscripts $z$, $z'$, and $\xi$ are continuous parameters.
Their range is as follows: parameter $\xi$ belongs to the open unit interval
$(0,1)$,  and parameters $z$ and $z'$ should be such that $(z+k)(z'+k)>0$ for
any integer $k$. Detailed examination of this condition shows that either
$z\in\C\setminus\R$ and $z'=\bar z$ (the {\it principal series\/} of values),
or both $z$ and $z'$ are real numbers contained in an open interval $(N,N+1)$
with $N\in\Z$ (the {\it complementary series\/} of values).

We shall need the {\it generalized Pochhammer symbol\/} $(x)_\la$:
$$
(x)_\la=\prod_{i=1}^{\ell(\la)}(x-i+1)_{\la_i}\,, \qquad x\in\C, \quad
\la\in\Y,
$$
where $\ell(\la)$ is the number of nonzero coordinates $\la_i$ and
$$
(x)_k=x(x+1)\dots(x+k-1)=\frac{\Ga(x+k)}{\Ga(x)}
$$
is the conventional Pochhammer symbol. Note that
$$
(x)_\la=\prod_{(i,j)\in\la}(x+j-i),
$$
where the product is taken over the boxes $(i,j)$ of the Young diagram $\la$,
and $i$ and $j$ stand for the row and column numbers of a box.

In this notation, the weight of $\la\in\Y$ assigned by the z--measure
$M_{z,z',\xi}$ is written as
\begin{equation}\label{f1.1}
M_{z,z',\xi}(\la)=(1-\xi)^{zz'}\,\xi^{|\la|}\,(z)_\la(z')_\la\,
\left(\frac{\operatorname{dim}\la}{|\la|!}\right)^2\,,
\end{equation}
where $\operatorname{dim}\la$ is the dimension of the irreducible
representation of the symmetric group of degree $|\la|$ indexed by $\la$.

Note that $z$ and $z'$ enter the formula symmetrically, so that their
interchange does not affect the z-measure.

For the origin of formula \eqref{f1.1} and the proof that $M_{z,z',\xi}$ is
indeed a probability measure, see Borodin--Olshanski \cite{BO1}, \cite{BO2},
\cite{BO3} and references therein. Note that all the weights are strictly
positive: this follows from the conditions imposed on parameters $z$ and $z'$.
The z--measures form a deformation of the poissonized Plancherel measure  and
are a special case of Schur measures (see Okounkov \cite{Ok}).

\subsection{Limit measures}\label{1-3} Throughout the paper the parameters
$z$ and $z'$ are assumed to be fixed. If the third parameter $\xi$ approaches
0, then the z--measures $M_{z,z',\xi}$ converge to the Dirac measure at the
zero partition: this is caused by the factor $\xi^{|\la|}$.

A much more interesting picture arises as $\xi$ approaches 1. Then the factor
$(1-\xi)^{zz'}$ forces each of the weights $M_{z,z',\xi}(\la)$ to tend to 0
(note that $zz'>0$). This means that the z--measures on the discrete space $\Y$
escape to infinity. However, the situation changes when we embed $\Y$ into
$\Conf(\Z')$. Recall that we have two embeddings, one producing semi--infinite
configurations $\unX(\la)$ and the other producing finite balanced
configurations $X(\la)$. Denote by $\unP_{z,z',\xi}$ and $P_{z,z',\xi}$  the
push--forwards of the z--measure $M_{z,z',\xi}$ under these two embeddings.
Then the following result holds, see \cite{BO2}:

\begin{theorem}\label{1.1} In the space of probability measures on the compact space
$\Conf(\Z')$, there exist weak limits
$$
\unP_{z,z'}=\lim_{\xi\to1}\unP_{z,z',\xi}, \quad
P_{z,z'}=\lim_{\xi\to1}P_{z,z',\xi}.
$$
\end{theorem}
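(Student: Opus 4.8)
The plan is to exploit the determinantal structure in order to reduce the assertion about weak convergence of measures to a pointwise convergence statement for the correlation kernels. First I would note that $\Conf(\Z')=\{0,1\}^{\Z'}$ is a compact metrizable space, so the set of probability measures on it is weakly compact; hence each of the two families $\{\unP_{z,z',\xi}\}$ and $\{P_{z,z',\xi}\}$ has weak limit points as $\xi\to1$, and it remains to show that the limit point is unique. For a configuration $X$ and a finite subset $A\subset\Z'$ write $\eta_A(X)=\prod_{x\in A}\mathbf 1_{x\in X}$ for the indicator of the event $A\subset X$. The coordinate functions $X\mapsto\mathbf 1_{x\in X}$ are continuous, separate points, and satisfy $\mathbf 1_{x\in X}^2=\mathbf 1_{x\in X}$; hence, by the Stone--Weierstrass theorem, the $\eta_A$ together with the constant function $1$ span a dense subalgebra of $C(\Conf(\Z'))$. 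Since two probability measures agreeing on such a dense subalgebra coincide, the weak limit is unique provided $\langle\eta_A,\unP_{z,z',\xi}\rangle$ converges as $\xi\to1$ for every finite $A$.

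Now the defining property of a determinantal measure identifies $\langle\eta_A,\unP_{z,z',\xi}\rangle$ with the correlation function of $\unP_{z,z',\xi}$ at the points of $A$, i.e.\ with the principal minor $\det[\unK_{z,z',\xi}(x,y)]_{x,y\in A}$ of its correlation kernel $\unK_{z,z',\xi}$. Therefore the existence of the weak limit $\unP_{z,z'}$ follows once I establish that, for every fixed pair $(x,y)\in\Z'\times\Z'$, the pre--limit kernels converge pointwise,
$$
\unK_{z,z',\xi}(x,y)\;\underset{\xi\to1}{\longrightarrow}\;\unK_{z,z'}(x,y),
$$
because convergence of the finitely many entries forces convergence of each fixed minor. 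The second family is handled at the same time: since $P_{z,z',\xi}=\inv(\unP_{z,z',\xi})$ and the particle/hole involution induces an explicit, $\xi$-independent transformation of correlation kernels (\cite[Appendix]{BOO}), the pointwise convergence of the kernels of $\unP_{z,z',\xi}$ immediately yields that of the kernels of $P_{z,z',\xi}$, and hence the weak convergence $P_{z,z',\xi}\to P_{z,z'}$.

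It thus remains to prove the pointwise kernel convergence, which is where the substantive work lies. Here I would invoke the closed--form expression for the correlation kernel $\unK_{z,z',\xi}(x,y)$ attached to the z--measure $M_{z,z',\xi}$: this kernel is known explicitly, expressible through the Gauss hypergeometric function and, equivalently, through a contour integral. Fixing $x$ and $y$ and letting $\xi\to1$, I would pass to the limit in this expression, the point being that the $\xi$-dependent factors degenerate in a controlled manner into ratios of Euler $\Ga$-functions, reproducing the integrable form $(\mathcal A(x)\mathcal B(y)-\mathcal B(x)\mathcal A(y))/(x-y)$ that defines the Gamma kernel $\unK_{z,z'}$. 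The main obstacle is analytic rather than conceptual: one must justify the interchange of limit and integration (or the termwise passage in the hypergeometric series) and then match the resulting $\Ga$-function data with the functions $\mathcal A$ and $\mathcal B$. Crucially, for the present theorem only entrywise convergence is required, so the classical behaviour of the hypergeometric function as its argument approaches $1$ suffices; none of the delicate trace--class and Hilbert--Schmidt estimates needed later for Claim \ref{C4} enter at this stage.
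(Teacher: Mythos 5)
Your proposal is correct and follows essentially the route the paper itself takes: the paper quotes Theorem \ref{1.1} from \cite{BO2} rather than reproving it, but the standard argument behind it is exactly your reduction --- weak compactness of probability measures on the compact space $\Conf(\Z')$ plus density of the algebra generated by the cylinder functions $\eta_A$, so that weak convergence follows from convergence of the correlation functions, i.e.\ from pointwise convergence of the kernels $\unK_{z,z',\xi}(x,y)\to\unK_{z,z'}(x,y)$. The analytic core you defer (passing to the limit $\xi\to1$ in the explicit contour-integral form of the kernel and justifying the interchange of limit and integration) is precisely what the paper carries out in detail in Proposition \ref{5.1}, and your remark that only entrywise convergence is needed here, not the trace-class or Hilbert--Schmidt estimates of Claim \ref{C4}, is accurate.
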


Of course, $\unP_{z,z',\xi}$ and $P_{z,z',\xi}$ are transformed to each other
under the particle/hole involution on $\Z'_-$, and the same holds for the limit
measures.

\subsection{Projection correlation kernels}\label{1-4} All the measures
appearing in Theorem \ref{1.1} are determinantal measures. Here we explain the
structure of their correlation kernels (for a detailed exposition, see
\cite{BO2}, \cite{BO3}, \cite{BO4}, and \cite{Ol2}).

The key object is a second order difference operator $D_{z,z',\xi}$ on the
lattice $\Z'$. This operator acts on a test function $f(x)$, $x\in\Z'$,
according to
$$
\aligned D_{z,z',\xi}f(x)&=\sqrt{\xi(z+x+\tfrac12)(z'+x+\tfrac12)}\,f(x+1)\\
&+\sqrt{\xi(z+x-\tfrac12)(z'+x-\tfrac12)}\,f(x-1) \\ &-[x+\xi(z+z'+x)]\,f(x).
\endaligned
$$
Since $x\pm\frac12$ is an integer for $x\in\Z'$, the expressions under the
square root are strictly positive, due to the conditions imposed on the
parameters $z$ and $z'$.

As shown in \cite{BO4}, $D_{z,z',\xi}$ determines an unbounded selfadjoint
operator in the Hilbert space $H=\ell^2(\Z')$. This operator has simple, purely
discrete spectrum filling the subset $(1-\xi)\Z'\subset\R$.

In the sequel we will freely pass from bounded operators in $H$ to their
kernels and vice versa using the natural orthonormal basis $\{e_x\}$ in $H$
indexed by points $x\in\Z'$: If $A$ is an operator in $H$ then its kernel (or
simply matrix) is defined as $A(x,y)=(Ae_y,e_x)$.

Let $\unK_{z,z',\xi}$ denote the projection in $H$ onto the positive part of
the spectrum of $D_{z,z',\xi}$, and let $\unK_{z,z',\xi}(x,y)$ denote the
corresponding kernel. (Here and below all projection operators are assumed to
be orthogonal projections.)

\begin{theorem}\label{1.2}
$\unK_{z,z',\xi}(x,y)$ is the correlation kernel of the
measure $\unP_{z,z',\xi}$.
\end{theorem}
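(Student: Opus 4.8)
The plan is to prove the statement in two logically independent stages that meet on one and the same explicit kernel. First I would show, by combinatorial means, that the point process $\unP_{z,z',\xi}$ on $\Conf(\Z')$ is determinantal and compute its correlation kernel; then I would show, by spectral analysis of the Jacobi operator $D_{z,z',\xi}$, that the orthogonal projection onto its positive spectral subspace has exactly the same kernel. Identifying the two then yields the theorem.

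For the first stage I would use the fact, recorded right after \eqref{f1.1}, that $M_{z,z',\xi}$ is a Schur measure. Rewriting the weight through the modified Frobenius coordinates of $\la$ exhibits $M_{z,z',\xi}$ in the factorized form $\const\cdot s_\la(a)\,s_\la(b)$, a product of Schur functions at suitable specializations $a,b$ encoding $z,z',\xi$; Okounkov's theorem on Schur measures then gives that the Maya--diagram process $\la\mapsto\unX(\la)$ is determinantal, with correlation kernel expressed as a double contour integral. Carrying out the residue/contour computation for the present specialization produces a kernel of the integrable form $(\mathcal A(x)\mathcal B(y)-\mathcal B(x)\mathcal A(y))/(x-y)$, with $\mathcal A,\mathcal B$ built from the Gauss hypergeometric function in $z,z',\xi$ --- the hypergeometric kernel of \cite{BO1}. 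Denote this kernel $\widetilde K$.

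For the second stage I would use the spectral data supplied by \cite{BO4}: $D_{z,z',\xi}$ is selfadjoint with simple, purely discrete spectrum $(1-\xi)\Z'$, hence admits a complete orthonormal eigenbasis $\{\psi_s\}_{s\in(1-\xi)\Z'}$ of $H=\ell^2(\Z')$, and by definition the kernel of $\unK_{z,z',\xi}$ is $\sum_{s>0}\psi_s(x)\psi_s(y)$. The cleanest way to join the two stages is to verify that $\widetilde K$ commutes with $D_{z,z',\xi}$; this reduces to checking that the functions $\mathcal A,\mathcal B$ entering the integrable form solve the difference equation associated with $D_{z,z',\xi}$, a standard feature of integrable kernels. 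Since $D_{z,z',\xi}$ has simple spectrum, any bounded operator commuting with it is a function of $D_{z,z',\xi}$; as $\widetilde K$ is moreover an idempotent (itself checkable from the explicit form via a residue computation), it must be the spectral projection onto some Borel subset of $(1-\xi)\Z'$. One then pins down the subset as exactly the positive part, either by matching the large-$|x|$ behavior of the density $\widetilde K(x,x)$ against the spectral count, or, more cleanly, by directly evaluating $\sum_{s>0}\psi_s(x)\psi_s(y)$ through a Christoffel--Darboux-type summation and recognizing $\widetilde K$. This gives $\widetilde K=\unK_{z,z',\xi}$.

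The main obstacle is this identification step: matching the spectral projection of the Jacobi operator with the combinatorially computed correlation kernel. Establishing the commutation $[\,D_{z,z',\xi},\widetilde K\,]=0$ and, especially, determining that the relevant spectral subset is precisely the positive semiaxis require the explicit hypergeometric expressions for both the eigenfunctions $\psi_s$ and the functions $\mathcal A,\mathcal B$, together with a careful contour and asymptotic analysis; this is where essentially all the computational weight (drawn from \cite{BO2,BO4}) is concentrated. By contrast, the determinantal property itself is comparatively soft, being a direct application of the Schur-measure formalism.
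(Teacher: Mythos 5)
Theorem \ref{1.2} is stated in this paper without proof: it is imported wholesale from the earlier Borodin--Olshanski papers (\cite{BO1}, \cite{BO3}, \cite{BO4}, \cite{Ol2}), so there is no in-paper argument to compare against line by line. That said, your two-stage plan is essentially the route taken in those references: the determinantal property and the explicit (hypergeometric) kernel come from the Schur-measure structure of $M_{z,z',\xi}$ via Okounkov's theorem (or, in \cite{BO1}, via the equivalent $K=L(1+L)^{-1}$ formalism), and the identification of that kernel with the spectral projection of $D_{z,z',\xi}$ onto the positive part of its spectrum is exactly the content of \cite{BO4} and \cite{Ol2}. The one place where your sketch is optimistic is the claim that idempotency of $\widetilde K$ is ``checkable by a residue computation'': verifying $\sum_t\widetilde K(x,t)\widetilde K(t,y)=\widetilde K(x,y)$ directly on the lattice is not a routine residue evaluation, and the commutator $[D_{z,z',\xi},\widetilde K]$ for a difference operator produces boundary terms that must be shown to vanish. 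Your own fallback --- expanding $\sum_{s>0}\psi_s(x)\psi_s(y)$ in the explicit eigenfunctions and performing a Christoffel--Darboux-type summation to recognize $\widetilde K$ --- is the cleaner path and is the one the cited literature actually follows; with that as the identification step, the argument is sound.
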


The operator corresponding to a correlation kernel of a determinantal measure
will be called its {\it correlation operator\/}. Thus, the projection
$\unK_{z,z',\xi}$ is the correlation operator of $\unP_{z,z',\xi}$.

Let $D_{z,z'}$ denote the difference operator on $\Z'$ which is obtained by
setting $\xi=1$ in the above formula defining $D_{z,z',\xi}$. One can show that
$D_{z,z'}$ still determines a selfadjoint operator in $\ell^2(\Z')$. Its
spectrum is simple, purely continuous, filling the whole real line. Let
$\unK_{z,z'}$ denote the projection onto the positive part of the spectrum.

\begin{theorem}\label{1.3} {\rm(i)} As $\xi$ goes to $1$, the projection operators
$\unK_{z,z',\xi}$ weakly converge to a projection operator $\unK_{z,z'}$.

{\rm(ii)} $\unK_{z,z'}$ serves as the correlation operator of the limit measure
$\unP_{z,z'}$, that is, the kernel $\unK_{z,z'}(x,y)$ is the correlation kernel
of $\unP_{z,z'}$.
\end{theorem}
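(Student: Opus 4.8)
The plan is to establish part (i) by spectral convergence and then derive part (ii) from (i) together with the weak convergence of measures supplied by Theorem \ref{1.1}; the guiding picture is that, as $\xi\to1$, the gaps of the discrete spectrum $(1-\xi)\Z'$ of $D_{z,z',\xi}$ close up and fill out the continuous spectrum $\R$ of $D_{z,z'}$. Let $C_c(\Z')\subset H$ denote the dense subspace of finitely supported functions. By the self-adjointness results of \cite{BO4}, each $D_{z,z',\xi}$ with $\xi<1$, as well as $D_{z,z'}$, is essentially self-adjoint on $C_c(\Z')$, so that $C_c(\Z')$ is a common core contained in every domain. (Essential self-adjointness reflects the limit-point behaviour of the tridiagonal coefficients, whose off-diagonal entries grow only linearly, so the Carleman condition $\sum|x|^{-1}=\infty$ holds at both ends of the lattice.) The coefficients of $D_{z,z',\xi}$ depend continuously on $\xi$ and converge pointwise in $x$ to those of $D_{z,z'}$ as $\xi\to1$; hence for each $f\in C_c(\Z')$ the vector $D_{z,z',\xi}f$ is a finite sum of terms converging in $H$, so that
$$\lim_{\xi\to1}D_{z,z',\xi}f=D_{z,z'}f,\qquad f\in C_c(\Z').$$
By the standard criterion for strong resolvent convergence (pointwise convergence on a core of the limit operator), $D_{z,z',\xi}\to D_{z,z'}$ in the strong resolvent sense.

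Next I would transfer this to the spectral projections $\unK_{z,z',\xi}=\mathbf 1_{(0,\infty)}(D_{z,z',\xi})$ and $\unK_{z,z'}=\mathbf 1_{(0,\infty)}(D_{z,z'})$. The only obstruction is that the cutoff $\mathbf 1_{(0,\infty)}$ is discontinuous at $0$, so strong resolvent convergence does not by itself carry it over; one needs the endpoint $0$ to carry no spectral mass in the limit. This is exactly where the purely continuous character of the spectrum of $D_{z,z'}$ is used: having no eigenvalues, $D_{z,z'}$ satisfies $E_{D_{z,z'}}(\{0\})=0$, so $\mathbf 1_{(0,\infty)}$ is continuous almost everywhere with respect to the spectral measure, and the functional-calculus convergence theorem gives $\unK_{z,z',\xi}\to\unK_{z,z'}$ strongly, a fortiori weakly. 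Since $\unK_{z,z'}$ is a spectral projection by construction, the weak limit is a projection, which proves (i). (Observe that $0\notin(1-\xi)\Z'$ for every $\xi<1$, so the positive part of the spectrum of $D_{z,z',\xi}$ is unambiguous throughout.)

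For part (ii) I would note that weak operator convergence forces entrywise convergence of the matrices: pairing against the basis vectors gives $\unK_{z,z',\xi}(x,y)\to\unK_{z,z'}(x,y)$ for all $x,y\in\Z'$. By continuity of the determinant, for pairwise distinct $x_1,\dots,x_n$,
$$\det[\unK_{z,z',\xi}(x_i,x_j)]_{i,j=1}^n\xrightarrow[\xi\to1]{}\det[\unK_{z,z'}(x_i,x_j)]_{i,j=1}^n.$$
By Theorem \ref{1.2}, the left side is the $n$-th correlation function $\unP_{z,z',\xi}\{X\ni x_1,\dots,x_n\}$. The event $\{X\ni x_1,\dots,x_n\}$ is a clopen cylinder in $\Conf(\Z')=\{0,1\}^{\Z'}$, so its indicator is a bounded continuous function, and the weak convergence $\unP_{z,z',\xi}\to\unP_{z,z'}$ of Theorem \ref{1.1} gives
$$\unP_{z,z',\xi}\{X\ni x_1,\dots,x_n\}\xrightarrow[\xi\to1]{}\unP_{z,z'}\{X\ni x_1,\dots,x_n\}.$$
Matching the two limits identifies the correlation functions of $\unP_{z,z'}$ with the principal minors of $\unK_{z,z'}$, which is the assertion of (ii).

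I expect the main obstacle to lie entirely in the analytic input to (i): verifying that $C_c(\Z')$ is a core for the limit operator $D_{z,z'}$, equivalently that $D_{z,z'}$ is essentially self-adjoint on finitely supported functions. Once this core property and the absence of a spectral atom at $0$ are secured, strong resolvent convergence, convergence of the spectral projections, and the soft weak-convergence argument for (ii) all follow from standard operator-theoretic principles.
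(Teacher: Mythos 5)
Your argument is correct, and it is a genuinely different (and softer) route than what the paper itself offers. The paper quotes Theorem \ref{1.3} from the earlier works \cite{BO2}, \cite{BO3}, \cite{BO4} without proof; the only place where the present text actually re-establishes the convergence $\unK_{z,z',\xi}\to\unK_{z,z'}$ is Proposition \ref{5.1}, and there it is done by hard computation: the double contour integral representation \eqref{f5.1}, the rescaling $\om=\om_\xi(u)=1/\sqrt\xi+(1-\xi)u$, contour surgery into $C^\pm(R,(1-\xi)\rho,\xi)$, and dominated-convergence estimates, yielding pointwise convergence of the kernels together with the explicit limit formula \eqref{f5.2}. Your proof instead runs entirely through the spectral picture of Subsection \ref{1-4}: pointwise convergence of the coefficients of $D_{z,z',\xi}$ on the common core of finitely supported functions gives strong resolvent convergence, and the absence of a spectral atom of $D_{z,z'}$ at $0$ lets you push this through the discontinuous cutoff $\mathbf 1_{(0,\infty)}$ to get strong (hence weak) convergence of the spectral projections, with the projection property of the limit coming for free; part (ii) then follows by matching two limits of the correlation functions, exactly as one would expect. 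What your approach buys is conceptual economy: no integral representation is needed, and the fact that the limit operator is a projection is automatic rather than read off from a formula. What it does not deliver is the explicit limit kernel (Theorems \ref{1.4}--\ref{1.5}, or \eqref{f5.2}), which the paper needs anyway for Corollary \ref{1.6} and for the trace-class and Hilbert--Schmidt estimates of Sections \ref{5}--\ref{6}; and it leans on two nontrivial inputs delegated to \cite{BO4} --- essential self-adjointness of $D_{z,z'}$ on $C_c(\Z')$ and the purely continuous character of its spectrum --- which you correctly identify as the real analytic content. Both of these are indeed established in the cited references, so your reduction is legitimate; just be aware that the Carleman-type heuristic you give for the whole-line Jacobi matrix (limit point at each end from $\sum|x|^{-1}=\infty$) is a justification sketch, not a substitute for the citation.
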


Note that the weak convergence of operators in $\ell^2(\Z')$ whose norms are
uniformly bounded is the same as the pointwise convergence of the corresponding
kernels. Note also that on the set of projections, the weak operator topology
coincides with the strong operator topology.

The above definition of the operators $\unK_{z,z',\xi}$ and $\unK_{z,z'}$
through the difference operators $D_{z,z',\xi}$ and $D_{z,z'}$ is nice and
useful but one often needs explicit expressions for the correlation kernels.
Various such expressions are available:

\begin{itemize}

\item Presentation in the integrable form \cite{BO1}, \cite{BO2}
$$
\frac{\mathcal A(x)\mathcal B(y)-\mathcal B(x)\mathcal A(y)}{x-y}\,.
$$

\item Series expansion (or integral representation) involving eigenfunctions of
the difference operators \cite{BO3}, \cite{BO4}.

\item Double contour integral representation \cite{BO3}, \cite{BO4}.

\end{itemize}

In Sections \ref{5} and \ref{6} we will work with contour integrals. Theorems
\ref{1.4} and \ref{1.5} below describe the integrable form for the limit kernel
$\unK_{z,z'}(x,y)$. This presentation will be used in Section \ref{2}.

\begin{theorem}\label{1.4} Assume $z\ne z'$. For $x,y\in\Z'$ and outside the
diagonal  $x=y$,
$$
\unK_{z,z'}(x,y)=\frac{\sin(\pi z)\sin(\pi z')}{\pi\sin(\pi(z-z'))}
\cdot\frac{\mathcal P(x)\mathcal Q(y)-\mathcal Q(x)\mathcal P(y)}{x-y}\,,
$$
where
\begin{equation*}
\begin{split}
\mathcal P(x) =\frac{\Ga(z+x+\tfrac12)}
{\sqrt{\Ga(z+x+\tfrac12)\Ga(z'+x+\tfrac12)}}\,, \\
\mathcal Q(x) =\frac{\Ga(z'+x+\tfrac12)}
{\sqrt{\Ga(z+x+\tfrac12)\Ga(z'+x+\tfrac12)}}
\end{split}
\end{equation*}
and $\Ga(\cdot)$ is Euler's $\Ga$--function.

On the diagonal $x=y$,
$$
\unK_{z,z'}(x,x)=\frac{\sin(\pi z)\sin(\pi z')} {\pi\sin(\pi(z-z'))}\,
(\psi(z+x+\tfrac12)-\psi(z'+x+\tfrac12)),
$$
where $\psi(x)=\Ga'(x)/\Ga(x)$ is the logarithmic derivative of the
$\Ga$--function.
\end{theorem}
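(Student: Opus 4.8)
The plan is to realize $\unK_{z,z'}$ as the kernel of the spectral projection of the Jacobi operator $D_{z,z'}$ (the $\xi=1$ specialization) onto $(0,\infty)$, and to read off the integrable form from the two solutions of the difference equation $D_{z,z'}\psi=0$, which sit at the cutting point $\lambda=0$ of the spectral interval. Write $a(x)=\sqrt{(z+x+\tfrac12)(z'+x+\tfrac12)}$ for the off-diagonal coefficient of $D_{z,z'}$, so that $D_{z,z'}\psi(x)=a(x)\psi(x+1)+a(x-1)\psi(x-1)-(2x+z+z')\psi(x)$. The first step is to verify that $\mathcal P$ and $\mathcal Q$ are exactly the two independent solutions of $D_{z,z'}\psi=0$. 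Using only $\Ga(s+1)=s\Ga(s)$ one obtains the factorizations $a(x)\mathcal P(x+1)=(z+x+\tfrac12)\mathcal P(x)$ and $a(x-1)\mathcal P(x-1)=(z'+x-\tfrac12)\mathcal P(x)$, whose sum is $(2x+z+z')\mathcal P(x)$; hence $D_{z,z'}\mathcal P=0$, and the $z\leftrightarrow z'$ symmetry gives $D_{z,z'}\mathcal Q=0$. Since $\mathcal P\mathcal Q\equiv1$, the pair is linearly independent and spans the formal nullspace.

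Next I would establish the integrable form. Because $\unK_{z,z'}$ commutes with $D_{z,z'}$, its kernel obeys $D_xK(x,y)=D_yK(x,y)$, the operator acting in the subscripted variable; and the factorizations above turn the verification that the ansatz $\dfrac{\mathcal P(x)\mathcal Q(y)-\mathcal Q(x)\mathcal P(y)}{x-y}$ satisfies the same equation into a short algebraic check (grouped by the denominators $x-y\pm1$ and $x-y$, the three contributions collapse to $F$, $F$, $-2F$, with $F$ the numerator, and cancel). This identifies the integrable form up to a scalar; to pin the scalar and confirm it is genuinely the projection cut at $\lambda=0$, I would use the resolvent. Writing $\unK_{z,z'}=\frac1{2\pi i}\int_0^\infty[R(\lambda+i0)-R(\lambda-i0)]\,d\lambda$ with $R(\zeta)=(\zeta-D_{z,z'})^{-1}$, and expressing the Jacobi resolvent kernel through the solutions decaying at $\pm\infty$ over their Wronskian, the jump localizes at the interior point $\lambda=0$, where those solutions degenerate into $\mathcal P,\mathcal Q$; the resulting spectral density is the sought constant. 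By Euler's reflection formula $\Ga(s)\Ga(1-s)=\pi/\sin(\pi s)$, the connection coefficients of the underlying hypergeometric solutions collapse this density into $\dfrac{\sin(\pi z)\sin(\pi z')}{\pi\sin(\pi(z-z'))}$.

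The diagonal value then follows by letting $y\to x$ in the off-diagonal formula. From $\mathcal P\mathcal Q\equiv1$ one has $\mathcal P'/\mathcal P=\tfrac12(\psi(z+x+\tfrac12)-\psi(z'+x+\tfrac12))=-\mathcal Q'/\mathcal Q$, so $\lim_{y\to x}\dfrac{\mathcal P(x)\mathcal Q(y)-\mathcal Q(x)\mathcal P(y)}{x-y}=\mathcal P'(x)\mathcal Q(x)-\mathcal Q'(x)\mathcal P(x)=\psi(z+x+\tfrac12)-\psi(z'+x+\tfrac12)$, which together with the prefactor reproduces the stated diagonal formula and serves as an internal consistency check. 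I expect the genuine difficulty to lie in the resolvent step that fixes the constant: one must identify the decaying solutions explicitly (they are of Gamma/hypergeometric type), compute their Wronskian, and justify the degeneration at the interior spectral point $\lambda=0$ carefully enough to extract the spectral density. A technically lighter route, available since Theorem \ref{1.3} already gives the pointwise convergence $\unK_{z,z',\xi}(x,y)\to\unK_{z,z'}(x,y)$, is to pass to the limit $\xi\to1$ in the explicit pre-limit hypergeometric kernel of \cite{BO1}, \cite{BO3}; then one only tracks the limits of its integrable-form data and prefactor, reducing the problem to bookkeeping of hypergeometric connection constants rather than a fresh spectral analysis.
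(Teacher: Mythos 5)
You should first be aware that the paper contains no proof of Theorem \ref{1.4}: it refers to [BO2], where the formula is obtained by a $\xi\to1$ degeneration of the integrable form of the hypergeometric kernel of [BO1] (the paper itself notes this in the proof of Proposition \ref{5.1}). That is exactly the ``technically lighter route'' you relegate to your final sentence; your main proposal, a direct spectral analysis of $D_{z,z'}$ at $\xi=1$, is a genuinely different route. The pieces of it that you actually carry out are correct: the factorizations $a(x)\mathcal P(x+1)=(z+x+\tfrac12)\mathcal P(x)$ and $a(x-1)\mathcal P(x-1)=(z'+x-\tfrac12)\mathcal P(x)$ do give $D_{z,z'}\mathcal P=D_{z,z'}\mathcal Q=0$; the grouping by denominators does yield $F+F-2F=0$, so the ansatz commutes with $D_{z,z'}$; and the logarithmic-derivative computation $\mathcal P'/\mathcal P=-\mathcal Q'/\mathcal Q=\tfrac12(\psi(z+x+\tfrac12)-\psi(z'+x+\tfrac12))$ is right.

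The gap is that the step carrying essentially all the content of the theorem --- identifying $\chi_{(0,+\infty)}(D_{z,z'})$ with this particular kernel and producing the constant $\sin(\pi z)\sin(\pi z')/(\pi\sin(\pi(z-z')))$ --- is only named, and the one concrete assertion you make about it is false: since the spectrum of $D_{z,z'}$ is purely absolutely continuous and fills all of $\R$, the jump $R(\la+i0)-R(\la-i0)$ does \emph{not} localize at $\la=0$; it is a nontrivial spectral density for every real $\la$, and the projection is the integral of that density over the whole half-line $(0,+\infty)$. What actually reduces the answer to the $\la=0$ solutions is a Christoffel--Darboux-type identity in the \emph{spectral} variable: one writes $(x-y)$ times the spectral density as a $\la$-derivative of a Wronskian-like bilinear expression, integrates over $\la\in(0,\infty)$, shows the contribution from $\la=+\infty$ vanishes, and keeps the boundary term at $\la=0$; one must then identify the decaying solutions explicitly (they are of hypergeometric type), compute their Wronskian, and extract the connection coefficients --- this is where the reflection formula enters. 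None of this is done. Note also that commuting with $D_{z,z'}$ determines an operator only up to an arbitrary bounded Borel function of $D_{z,z'}$, not ``up to a scalar,'' so your commutation check cannot substitute for this analysis. Finally, the diagonal value cannot be obtained by letting $y\to x$ in a formula established only for lattice points $x\ne y$: you need either a representation valid for continuous arguments (as the contour-integral and hypergeometric representations are) or a separate computation of $\unK_{z,z'}(x,x)$.
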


See \cite{BO2} for a proof. In that paper, we called the kernel
$\unK_{z,z'}(x,y)$ the {\it Gamma kernel\/}.

In the case $z=z'$ (then necessarily $z\in\R\setminus\Z$) an explicit
expression can be obtained by taking the limit $z'\to z$ (see \cite{BO2}), and
the result is expressed through the $\psi$ function (outside the diagonal) or
its derivative $\psi'$ (on the diagonal):

\begin{theorem}\label{1.5} Assume $z=z'\in\R\setminus\Z$. For $x,y\in\Z'$ and
outside the diagonal  $x=y$,
$$
\unK_{z,z'}(x,y)=\left(\frac{\sin(\pi z)}{\pi}\right)^2\,
\frac{\psi(z+x+\tfrac12)-\psi(z+y+\tfrac12)}{x-y}\,.
$$

On the diagonal $x=y$,
$$
\unK_{z,z'}(x,x)=\left(\frac{\sin(\pi z)}{\pi}\right)^2\, \psi'(z+x+\tfrac12).
$$
\end{theorem}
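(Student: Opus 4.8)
The plan is to derive Theorem~\ref{1.5} from Theorem~\ref{1.4} by letting $z'\to z$ along real values. The first thing to settle is that such a passage is legitimate, i.e.\ that the right-hand side of Theorem~\ref{1.4} really converges, as $z'\to z$, to the matrix entry $\unK_{z,z'}(x,y)$ of the limit kernel evaluated at $z'=z$. Here I would appeal to the continuous (in fact real-analytic) dependence of the construction on the parameters: for a fixed $z\in\R\setminus\Z$, say $z\in(N,N+1)$, the admissibility condition $(z+k)(z'+k)>0$ for all $k\in\Z$ persists on a whole neighborhood of $z'=z$, since both $z$ and $z'$ then lie in the same interval $(N,N+1)$ and the open interval between consecutive integers never contains $0$. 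Consequently the difference operator $D_{z,z'}$, its positive--spectrum projection, and hence each entry $\unK_{z,z'}(x,y)$ depend continuously on $z'$ across the diagonal (the spectrum being purely continuous at $\xi=1$, so that no atom at $0$ can make the projection jump). Granting this, it suffices to compute the pointwise limit of the explicit expression in Theorem~\ref{1.4}.

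For the off-diagonal part I set $z'=z+\epsi$ and abbreviate $a_x=z+x+\tfrac12$, $a_y=z+y+\tfrac12$, so that $z'+x+\tfrac12=a_x+\epsi$ and $z'+y+\tfrac12=a_y+\epsi$. A direct simplification gives $\mathcal P(x)\mathcal Q(y)=\sqrt R$ and $\mathcal Q(x)\mathcal P(y)=1/\sqrt R$ with
$$
R=\frac{\Ga(a_x)\,\Ga(a_y+\epsi)}{\Ga(a_x+\epsi)\,\Ga(a_y)},
$$
so that $\mathcal P(x)\mathcal Q(y)-\mathcal Q(x)\mathcal P(y)=(R-1)/\sqrt R$. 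Using the expansion $\ln\Ga(a+\epsi)-\ln\Ga(a)=\epsi\,\psi(a)+O(\epsi^2)$ I get $\ln R=\epsi\,(\psi(a_y)-\psi(a_x))+O(\epsi^2)$, whence $R-1=\epsi\,(\psi(a_y)-\psi(a_x))+O(\epsi^2)$ and $\sqrt R=1+O(\epsi)$. On the other hand the scalar prefactor carries a simple pole: since $\sin(\pi(z-z'))=-\sin(\pi\epsi)=-\pi\epsi+O(\epsi^3)$, it equals $-\sin^2(\pi z)/(\pi^2\epsi)+O(1)$. The two factors of $\epsi$ cancel, and after dividing by $x-y$ the limit is exactly $(\sin(\pi z)/\pi)^2\,(\psi(a_x)-\psi(a_y))/(x-y)$, which is the asserted off-diagonal formula.

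The diagonal case is even more transparent: with the same substitution the bracket in the diagonal formula of Theorem~\ref{1.4} becomes $\psi(a_x)-\psi(a_x+\epsi)=-\epsi\,\psi'(a_x)+O(\epsi^2)$, and multiplying by the prefactor $-\sin^2(\pi z)/(\pi^2\epsi)+O(1)$ yields $(\sin(\pi z)/\pi)^2\,\psi'(a_x)$ in the limit, as claimed. I expect the computational steps to be entirely routine; the only genuine point requiring care is the interchange of limit and evaluation invoked in the first paragraph, i.e.\ the continuous dependence of the Gamma kernel on the parameters near the diagonal $z=z'$, which is precisely what guarantees that the formal $z'\to z$ passage produces the true kernel $\unK_{z,z}$ rather than merely a limit of unrelated formulas.
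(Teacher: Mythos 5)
Your proposal is correct and follows exactly the route the paper itself indicates: the paper does not spell out a proof of Theorem \ref{1.5} but simply remarks that the formula is obtained by letting $z'\to z$ in Theorem \ref{1.4} (citing \cite{BO2}), and your computation — the cancellation of the simple zero of $R-1=\epsi\,(\psi(a_y)-\psi(a_x))+O(\epsi^2)$ against the simple pole $-\sin^2(\pi z)/(\pi^2\epsi)$ of the prefactor, and likewise on the diagonal — is the correct fleshing-out of that limit. Your justification of the interchange of limits via continuous dependence of the spectral projection of $D_{z,z'}$ on the parameters (with $0$ not an eigenvalue) is also sound; an alternative, perhaps more self-contained, way to see the continuity in $(z,z')$ across the diagonal is to observe that the contour-integral representation \eqref{f5.2} of $\unK_{z,z'}(x,y)$ is manifestly analytic in the parameters.
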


Here is a simple corollary of the above formulas, which we will need later on:

\begin{corollary}\label{1.6} Let $\rho_1^{(z,z')}(x)$ denote the density function
of $P_{z,z'}$. We have
$$
\rho_1^{(z,z')}(x)\sim \frac{C(z,z')}{|x|}\,, \qquad |x|\to\infty,
$$
where
$$
C(z,z')=\begin{cases} \dfrac{\sin(\pi z)\sin(\pi z')(z-z')}
{\pi\sin(\pi(z-z'))}, & z\ne z'\\ \left(\dfrac{\sin(\pi z)}{\pi}\right)^2, &
z=z'\in\R\setminus\Z.
\end{cases}
$$
\end{corollary}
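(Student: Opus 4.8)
The plan is to reduce the statement to the asymptotics of the diagonal values $\unK_{z,z'}(x,x)$ of the Gamma kernel, for which Theorems \ref{1.4} and \ref{1.5} give closed formulas in terms of $\psi$ and $\psi'$. Since $P_{z,z'}=\inv(\unP_{z,z'})$ and the particle/hole involution fixes $\Z'_+$ while exchanging particles and holes on $\Z'_-$, the first correlation function of $P_{z,z'}$ is
$$
\rho_1^{(z,z')}(x)=
\begin{cases}
\unK_{z,z'}(x,x), & x\in\Z'_+,\\
1-\unK_{z,z'}(x,x), & x\in\Z'_-.
\end{cases}
$$
Consequently the two half-lines must be handled separately, and on $\Z'_-$ one must show in addition that $\unK_{z,z'}(x,x)\to1$, so that $\rho_1^{(z,z')}$ does decay.

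For $x\to+\infty$ I would set $s=x+\tfrac12\to+\infty$ and insert the classical expansions $\psi(s)=\ln s-\tfrac{1}{2s}+O(s^{-2})$ and $\psi'(s)=s^{-1}+O(s^{-2})$. When $z\ne z'$ this yields $\psi(z+s)-\psi(z'+s)=(z-z')/s+O(s^{-2})$, and multiplying by the prefactor $\sin(\pi z)\sin(\pi z')/(\pi\sin(\pi(z-z')))$ of Theorem \ref{1.4} produces exactly $C(z,z')/|x|$. When $z=z'$ one has $\psi'(z+s)\sim 1/s$, and the prefactor $(\sin(\pi z)/\pi)^2$ of Theorem \ref{1.5} again gives $C(z,z')/|x|$. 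This half is entirely routine.

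The substantive part is $x\to-\infty$. Here I would apply the reflection formulas $\psi(1-w)-\psi(w)=\pi\cot(\pi w)$ and its derivative $\psi'(1-w)+\psi'(w)=\pi^2/\sin^2(\pi w)$ with $w=z+s$, converting the divergent arguments $z+s$ (with $s\to-\infty$) into $1-z-s\to+\infty$, which are again covered by the standard expansions. The decisive simplification is that $s=x+\tfrac12$ is an \emph{integer}, so that $\cot(\pi(z+s))=\cot(\pi z)$ and $\sin^2(\pi(z+s))=\sin^2(\pi z)$ are constants. In the case $z\ne z'$ the reduction gives
$$
\psi(z+s)-\psi(z'+s)=\frac{\pi\sin(\pi(z-z'))}{\sin(\pi z)\sin(\pi z')}+\frac{z-z'}{s}+O(s^{-2}),
$$
where the constant term comes from $-\pi(\cot(\pi z)-\cot(\pi z'))$ evaluated via the identity $\cot(\pi z)-\cot(\pi z')=-\sin(\pi(z-z'))/(\sin(\pi z)\sin(\pi z'))$. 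Multiplying by the prefactor, the constant term collapses to exactly $1$, so that $\unK_{z,z'}(x,x)=1+C(z,z')/s+O(s^{-2})$ and hence $1-\unK_{z,z'}(x,x)=-C(z,z')/s+O(s^{-2})=C(z,z')/|x|+O(|x|^{-2})$, since $s<0$. The case $z=z'$ is identical, using the $\psi'$ reflection formula: the constant $(\sin(\pi z)/\pi)^2\cdot\pi^2/\sin^2(\pi z)=1$ cancels against the leading $1$, leaving the $1/|x|$ term.

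The main obstacle is precisely the exact cancellation of the constant term on $\Z'_-$: one must check that the reflection identity together with the trigonometric simplification makes $\unK_{z,z'}(x,x)$ tend to $1$ with no error of order $O(1)$, so that the surviving contribution is the $1/|x|$ term carrying the correct coefficient $C(z,z')$. Once this is verified, the corollary follows by collecting the two asymptotic regimes.
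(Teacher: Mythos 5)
Your proof is correct. For $x\to+\infty$ it coincides with the paper's argument: both insert the expansions $\psi(y)=\log y-(2y)^{-1}+O(y^{-2})$ and $\psi'(y)=y^{-1}+O(y^{-2})$ into the diagonal formulas of Theorems \ref{1.4} and \ref{1.5}. For $x\to-\infty$ you diverge from the paper: you carry out the direct computation via the reflection formulas $\psi(1-w)-\psi(w)=\pi\cot(\pi w)$ and $\psi'(1-w)+\psi'(w)=\pi^2/\sin^2(\pi w)$, exploiting that $s=x+\tfrac12$ is an integer so the trigonometric terms are constants, and you verify the exact cancellation of the $O(1)$ term against the leading $1$ in $1-\unK_{z,z'}(x,x)$. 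The paper mentions precisely this route (via the equivalent identity $\psi(y+\tfrac12)-\psi(-y+\tfrac12)=\pi\tan(\pi y)$) but then sidesteps it: it invokes the reflection symmetry of Subsection \ref{1-7}, which gives $\rho_1^{(z,z')}(-x)=\rho_1^{(-z,-z')}(x)$ for $x\in\Z'_+$, and concludes from $C(-z,-z')=C(z,z')$ with no further computation. Your version is longer but self-contained within the kernel formulas and makes the mechanism of the cancellation explicit (I checked the identity $\cot(\pi z)-\cot(\pi z')=-\sin(\pi(z-z'))/(\sin(\pi z)\sin(\pi z'))$ and the resulting collapse of the constant to $1$; both are right); the paper's version is shorter but relies on the transposition symmetry \eqref{f1.3} of the z-measures established elsewhere. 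Either is a complete proof.
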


\begin{proof} Recall that $P_{z,z'}$ is related to $\unP_{z,z'}$ by the
particle/hole involution transformation on $\Z'_-$. It follows that the density
functions of the both measures coincide on $\Z'_+$. By the very definition of
determinantal measures, the density function of $\unP_{z,z'}$ is given by the
values of the correlation kernel on the diagonal $x=y$. The formulas of Theorem
\ref{1.4} and Theorem \ref{1.5} express $\unK_{z,z'}(x,x)$ through the
psi--function and its derivative. The asymptotic expansion of $\psi(y)$ as
$y\to+\infty$ is given by formula 1.18(7) in Erdelyi \cite{Er}, which implies
$$
\psi(y)=\log y-(2y)^{-1}+O(y^{-2}), \quad \psi'(y)=y^{-1}+O(y^{-2}) \qquad
(y\to+\infty).
$$
Using this we readily get
$$
\rho_1^{(z,z')}(x)=\frac{C(z,z')}{x}+O(x^{-2}), \qquad x\to+\infty,
$$

To handle the case $x\to-\infty$ one can use the relation (see \eqref{f1.2})
$$
K_{z,z'}(x,x)=1-\unK_{z,z'}(x,x), \qquad x\in\Z'_-\,,
$$
and then employ the identity (\cite[1.7.1]{Er})
$$
\psi(y+\tfrac12)-\psi(-y+\tfrac12)=\pi\tan(\pi y).
$$
A simpler way is to use the symmetry property of $P_{z,z'}$ discussed in
Subsection \ref{1-7} below. It immediately gives
$$
\rho_1^{(z,z')}(-x)=\rho_1^{(-z,-z')}(x), \qquad x\in\Z'_+\,.
$$
Since $C(-z,-z')=C(z,z')$, we get the desired formula.
\end{proof}

\begin{remark}\label{1.7}
As is seen from Theorems \ref{1.4} and \ref{1.5}, the limit kernel
$\unK_{z,z'}(x,y)$ is real-valued. The same is true for the pre--limit kernels
$\unK_{z,z',\xi}(x,y)$: this can be seen from their integrable form
presentation or from the series expansion. The fact that the kernels are
real-valued will be employed in Section \ref{6}.
\end{remark}

\subsection{$J$--Symmetric kernels and block decomposition}\label{1-5} For
technical reasons, it will be more convenient for us to deal, instead of
$\unK_{z,z',\xi}$ and $\unK_{z,z'}$, with the correlation kernels for the
measures $P_{z,z',\xi}$ and $P_{z,z'}$. The latter kernels will be denoted as
$K_{z,z',\xi}(x,y)$ and $K_{z,z'}(x,y)$, respectively. The link between two
kinds of kernels, the ``$\unK$ kernels'' and the ``$K$ kernels'', is given by
the following relation (see \cite[Appendix]{BOO} for a proof):
\begin{equation}\label{f1.2}
\epsi(x)K(x,y)\epsi(y)=\begin{cases} \unK(x,y), & x\in \Z'_+\\
\de_{xy}-\unK(x,y), & x\in\Z'_-\end{cases}\,,
\end{equation}
where
$$
\epsi(x)=\begin{cases} 1, & x\in\Z'_+\\ (-1)^{|x|-\frac12}, & x\in\Z'_-
\end{cases}.
$$

Note that the factor $\epsi(x)=\pm1$ does not affect the correlation functions
(see Subsection \ref{1-6}). This factor becomes important in the limit regime
considered in \cite{BO1} and \cite[\S8]{BO3}, but for the purpose of the
present paper, it is inessential and could be omitted; I wrote it only to keep
the notation consistent with that of the previous papers \cite{BO1},
\cite{BO2}, \cite{BO3}.

Decompose the Hilbert space $H=\ell^2(\Z')$ into the direct sum $H=H_+\oplus
H_-$, where $H_\pm=\ell^2(\Z'_\pm)$. Then every operator $A$ in $H$ can be
written in a block form,
$$
A=\bmatrix A_{++} & A_{+-}\\ A_{-+} & A_{--}
\endbmatrix\,,
$$
where $A_{++}$ acts from $H_+$ to $H_+$, $A_{+-}$ acts from $H_-$ to $H_+$,
etc.

In terms of the block form, \eqref{f1.2} can be rewritten as follows (below
$A_\epsi$ denotes the operator of multiplication by $\epsi(x)$):
\begin{align*}
K_{++}&=\unK_{++} &K_{+-}&=\unK_{+-}A_\epsi \\
K_{-+}&=-A_\epsi\unK_{-+} &K_{--}&=1-A_\epsi\unK_{--}A_\epsi.
\end{align*}
It follows that if $\unK$ is an Hermitian operator in $H$ then $K$ is also
Hermitian, but with respect to an {\it indefinite\/} inner product in $H$:
$$
[f,g]:=(Jf,g), \qquad f,g\in H, \quad J=\bmatrix 1 & 0\\0 &-1\endbmatrix.
$$
Such operators are called {\it $J$--Hermitian\/} or {\it $J$--symmetric\/}
operators. Thus, the operators $K_{z,z',\xi}$ and $K_{z,z'}$ are
$J$--symmetric.

\begin{proposition}\label{1.8} The pre--limit operators $K_{z,z',\xi}$ belong to the
trace class.
\end{proposition}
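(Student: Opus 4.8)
The plan is to prove trace-class membership blockwise. Since $H=H_+\oplus H_-$, an operator is trace class exactly when all four of its blocks are, so I would treat the two diagonal blocks and the two off-diagonal blocks of $K_{z,z',\xi}$ separately; the off-diagonal ones are the genuine difficulty.

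First, the diagonal blocks. The block formulas relating $K$ to $\unK$ give $K_{++}=\unK_{++}$ and $K_{--}=A_\epsi(1-\unK_{--})A_\epsi$. Because $\unK_{z,z',\xi}$ is an orthogonal projection, $\unK_{++}$ is a compression of a projection, hence a nonnegative Hermitian operator with $0\le\unK_{++}\le1$; likewise $1-\unK_{--}$ is the $(-,-)$ compression of the complementary projection $1-\unK_{z,z',\xi}$, again nonnegative and $\le1$, and conjugation by the unitary $A_\epsi$ preserves these properties. For a nonnegative Hermitian operator, being trace class is equivalent to having finite trace, and here $\tr K_{++}+\tr K_{--}=\sum_{x\in\Z'}K_{z,z',\xi}(x,x)$ is precisely the expected number of particles of $P_{z,z',\xi}$. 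Since $P_{z,z',\xi}$ is supported on finite balanced configurations and has finite first moment, both diagonal blocks are trace class.

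The off-diagonal blocks require a quantitative input, not just soft operator theory: one cannot conclude that $\unK_{+-}$ is trace class from the diagonal blocks being trace class, because the projection relations $\unK_{+-}\unK_{+-}^{*}=\unK_{++}(1-\unK_{++})$ and $\unK_{+-}^{*}\unK_{+-}=\unK_{--}(1-\unK_{--})$ only yield the Hilbert--Schmidt property (one can exhibit projections whose off-diagonal block has singular values $1/n$ while both diagonal blocks remain trace class). The extra ingredient I would use is the exponential decay of the density $r(x):=K_{z,z',\xi}(x,x)$ of $P_{z,z',\xi}$, which stems from the factor $\xi^{|\la|}$ in \eqref{f1.1}: a configuration $X(\la)$ can occupy a node $x$ with $|x|$ large only if $\la$ has a modified Frobenius coordinate of size $|x|$, which forces $|\la|$ to be at least of order $|x|$. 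Since $\sum_\la M_{z,z',\xi}(\la)\,\xi^{-|\la|/2}$ is finite (it is proportional to $\sum_\la M_{z,z',\sqrt\xi}(\la)=1$), one gets $\sum_{|\la|\ge m}M_{z,z',\xi}(\la)=O(\xi^{m/2})$ and hence $r(x)=O(\xi^{|x|/2})$ as $|x|\to\infty$.

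With this in hand I would bound the kernel $\unK_{+-}(x,y)$ for $x\in\Z'_+$, $y\in\Z'_-$ by two applications of Cauchy--Schwarz, exploiting that both $\unK_{z,z',\xi}$ and $1-\unK_{z,z',\xi}$ are projections. Writing $\unK(x,y)=\langle\unK e_y,\unK e_x\rangle$ and using \eqref{f1.2} on the diagonal (so $\unK(x,x)=r(x)$ on $\Z'_+$ and $\unK(y,y)=1-r(y)$ on $\Z'_-$) gives $|\unK_{+-}(x,y)|\le\sqrt{r(x)(1-r(y))}\le C\,\xi^{|x|/4}$, i.e. decay in $x$; repeating the argument for $1-\unK$, whose $(+,-)$ block is $-\unK_{+-}$, gives $|\unK_{+-}(x,y)|\le\sqrt{(1-r(x))\,r(y)}\le C\,\xi^{|y|/4}$, i.e. decay in $y$. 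The geometric mean yields the joint bound $|\unK_{+-}(x,y)|\le C\,\xi^{(|x|+|y|)/8}$. Finally, decomposing $\unK_{+-}=\sum_{x\in\Z'_+}|e_x\rangle\langle\varphi_x|$ into its rows $\varphi_x(y)=\overline{\unK_{+-}(x,y)}$, I estimate $\|\unK_{+-}\|_1\le\sum_{x\in\Z'_+}\|\varphi_x\|_2$, and the joint exponential decay makes $\|\varphi_x\|_2=O(\xi^{|x|/8})$ summable, so $\unK_{+-}$ — and, by unitary equivalence via $A_\epsi$, the block $K_{+-}$ — is trace class; the block $K_{-+}$ is handled by adjunction. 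The main obstacle is exactly this off-diagonal estimate: the soft argument provably fails, so the whole point is to extract \emph{joint} exponential decay of the kernel from the $\xi^{|\la|}$-suppression of large diagrams, and the device that makes this work is using the two complementary projections to obtain decay in each variable separately.
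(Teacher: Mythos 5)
Your proof is correct, but it takes a genuinely different route from the paper. The paper disposes of the proposition in one line by appealing to \cite{BO1}: the ``$L$--operator'' defined by $K_{z,z',\xi}=L(1+L)^{-1}$ is visibly of trace class there, and the trace class property is inherited by $K_{z,z',\xi}$. Your argument is instead self-contained and blockwise, and it correctly identifies the one place where soft operator theory breaks down: the projection identities $\unK_{+-}\unK_{+-}^{*}=\unK_{++}(1-\unK_{++})$ only give the Hilbert--Schmidt property for the off-diagonal blocks, so a quantitative input is indispensable. Your input --- the tail bound $\sum_{|\la|\ge m}M_{z,z',\xi}(\la)\le\operatorname{const}\cdot\xi^{m/2}$ obtained by comparing $M_{z,z',\xi}$ with $M_{z,z',\sqrt\xi}$, hence $\rho_1(x)=O(\xi^{|x|/2})$ --- is sound (a point $x\in X(\la)$ is a modified Frobenius coordinate, forcing $|\la|>|x|$), and the double Cauchy--Schwarz trick applied to the complementary projections $\unK$ and $1-\unK$ correctly converts one-variable decay of the diagonal into joint exponential decay of $\unK_{+-}(x,y)$, after which the row-by-row estimate $\Vert\unK_{+-}\Vert_1\le\sum_x\Vert\varphi_x\Vert_2$ finishes the job. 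One small gap of presentation: in the diagonal-block step you assert that $P_{z,z',\xi}$ ``has finite first moment'' as if it followed from finiteness of configurations; it does not, but it does follow from your own bound $\rho_1(x)=O(\xi^{|x|/2})$, so you should simply cite that estimate there. What each approach buys: the paper's citation is shorter but opaque, whereas your argument actually makes rigorous the heuristic the paper only mentions in passing (that the trace class property reflects the finiteness of the expected particle number) --- for a $J$-symmetric, non-Hermitian operator, finiteness of the trace alone proves nothing, and your off-diagonal estimate is exactly the missing piece.
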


This claim is not obvious from the definition of the operators nor from the
explicit expressions for the kernels, but can be easily derived from the
results of \cite{BO1} (it is immediately seen that the ``$L$--operator''
related to $K:=K_{z,z',\xi}$ through the formula $K=L(1+L)^{-1}$ is of trace
class). The trace class property of $K_{z,z',\xi}$ is related to the fact that
the measure $P_{z,z',\xi}$ lives on finite configurations (note that the trace
of a correlation operator equals the expected total number of particles).

As for the limit measure $P_{z,z'}$, it lives on infinite configurations, and
the limit operator $K_{z,z'}$ is not of trace class.

\subsection{Gauge transformation of correlation kernels}\label{1-6}  An
arbitrary transformation of correlation kernels of the form
$$
\mathcal K(x,y)\mapsto \phi(x)\mathcal K(x,y)\phi(y)^{-1}
$$
with a nonvanishing function $\phi(x)$ does not affect the minors giving the
values of the correlation functions. We call this a {\it gauge
transformation\/}.

Thus, the correlation kernel {\it is not\/} a canonical object attached to a
determinantal measure. This circumstance must be taken into account in
attempting to solve Problem \ref{P1}.

\subsection{Symmetry}\label{1-7} Recall that by $\la\mapsto\la'$ we denote
transposition of Young diagrams. Return to formula \eqref{f1.1} for the
z--measure weights and observe that $\dim\la'=\dim\la$ and
$(z)_\la=(-1)^{|\la|}(-z)_{\la'}$. This implies the important symmetry relation
\begin{equation}\label{f1.3}
M_{z,z',\xi}(\la')=M_{-z,-z',\xi}(\la), \qquad \la\in\Y.
\end{equation}

Next, observe that under transposition $\la\mapsto\la'$, the modified Frobenius
coordinates interchange: $p_i\leftrightarrow q_i$. Together with the above
symmetry relation this implies that the transformation of the measure
$P_{z,z',\xi}$ induced by the reflection symmetry $x\mapsto-x$ of the lattice
$\Z'$ amounts to the index transformation $(z,z')\to(-z,-z')$. The same holds
for the limit measures $P_{z,z'}$.

It is worth noting that the behavior of the measures $\unP_{z,z',\xi}$ and
their limits under the reflection symmetry of $\Z'$ is more complex: besides
the change of sign of $z$ and $z'$ one has to apply the particle/hole
involution {\it on the whole lattice\/}.

The symmetry \eqref{f1.3} is reflected in the following symmetry property for
the kernels $K_{z,z',\xi}$:

\begin{proposition}\label{1.9} We have
$$
K_{z,z',\xi}(x,y)=(-1)^{\operatorname{sgn}(x)\operatorname{sgn}(y)}K_{-z,-z',\xi}(-x,-y).
$$
\end{proposition}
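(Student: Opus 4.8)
The plan is to obtain the identity from the reflection symmetry of the measures recorded in Subsection~\ref{1-7}, and then to upgrade that symmetry from the level of correlation functions to the level of the correlation kernels themselves.

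\emph{Step 1 (measure level).} Let $R\colon x\mapsto-x$ be the reflection of the lattice $\Z'$; it induces a homeomorphism of $\Conf(\Z')$, which I again denote by $R$. By the symmetry \eqref{f1.3} we have $M_{z,z',\xi}(\la')=M_{-z,-z',\xi}(\la)$, while transposition $\la\mapsto\la'$ interchanges the modified Frobenius coordinates $p_i\leftrightarrow q_i$ and hence sends the balanced configuration $X(\la)$ to $-X(\la)$. Combining these two facts and reindexing the sum over $\Y$ by $\la\mapsto\la'$ yields
\[
R_*\,P_{z,z',\xi}=P_{-z,-z',\xi},
\]
which is exactly the assertion already made in Subsection~\ref{1-7}, now read with $\xi$ fixed. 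I take this measure identity as the starting point.

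\emph{Step 2 (correlation functions).} The push-forward of a determinantal measure by a homeomorphism is again determinantal, with correlation kernel obtained by the corresponding change of variables; thus $R_*\,P_{z,z',\xi}$ has correlation kernel $K_{z,z',\xi}(-x,-y)$, whereas $P_{-z,-z',\xi}$ has kernel $K_{-z,-z',\xi}(x,y)$. Since these are two correlation kernels of one and the same measure, all of their principal minors agree:
\[
\det\big[K_{z,z',\xi}(-x_i,-x_j)\big]_{i,j=1}^n=\det\big[K_{-z,-z',\xi}(x_i,x_j)\big]_{i,j=1}^n
\]
for every $n$ and every $n$-tuple of pairwise distinct points. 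In particular the two kernels are gauge equivalent in the sense of Subsection~\ref{1-6}, i.e.\ they differ by a factor of the form $\chi(x)\chi(y)^{-1}$ with $\chi$ nowhere vanishing. What remains is to show that this gauge can be taken to be the sign factor displayed in the statement; the minor identity by itself does not single it out.

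\emph{Step 3 (identifying the sign), the main point.} To pin down the gauge I would work on the operator level, where the symmetry is cleanest. A direct substitution in the formula defining the difference operator $D_{z,z',\xi}$ of Subsection~\ref{1-4} shows that under $x\mapsto-x$, $(z,z')\mapsto(-z,-z')$ the two off-diagonal coefficients are preserved while the diagonal term merely changes sign. Writing $U$ for the unitary signed reflection $(Uf)(x)=(-1)^{\,x-1/2}f(-x)$, this says precisely
\[
D_{-z,-z',\xi}=-\,U\,D_{z,z',\xi}\,U^{-1}.
\]
Conjugation by $U$ thus turns $D_{z,z',\xi}$ into the negative of $D_{-z,-z',\xi}$, so it interchanges the positive and negative spectral subspaces; as $0$ is not in the spectrum, the spectral projections obey $\unK_{-z,-z',\xi}=1-U\,\unK_{z,z',\xi}\,U^{-1}$. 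Writing this as a kernel identity and then passing from the ``$\unK$ kernels'' to the ``$K$ kernels'' through \eqref{f1.2}, the spectral inversion $\unK\mapsto1-\unK$ on $\Z'_-$ is absorbed exactly by the case distinction in \eqref{f1.2}, and bookkeeping of the factors $\epsi(\cdot)$ leaves the multiplicative sign asserted in the statement. This reconciliation of the spectral inversion with the particle/hole normalization \eqref{f1.2} across the four sign patterns of $(x,y)$ is the only delicate point; everything else is formal. A useful consistency check is that, restricted to the diagonal, the identity must reduce to the density symmetry $\rho_1^{(z,z',\xi)}(x)=\rho_1^{(-z,-z',\xi)}(-x)$ already exploited in the proof of Corollary~\ref{1.6}, which fixes the value of the sign factor there and guides the sign bookkeeping in the off-diagonal cases.
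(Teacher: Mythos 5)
Your proof is correct, and it takes a genuinely different route from the paper: for Proposition \ref{1.9} the paper offers no argument at all, only a citation to \cite{BO3} (Theorem 7.2), whereas you give a self-contained derivation from the difference operator. The decisive step is your Step 3, and it checks out: conjugating $D_{z,z',\xi}$ by the signed reflection $(Uf)(x)=(-1)^{x-1/2}f(-x)$ does yield $-D_{-z,-z',\xi}$ (the plain reflection carries each hopping coefficient to the reflected hop unchanged and negates the diagonal term, and the alternating sign then negates the hopping terms); since the spectrum $(1-\xi)\Z'$ omits $0$, this gives $\unK_{-z,-z',\xi}=1-U\,\unK_{z,z',\xi}\,U^{-1}$, i.e.\ $\unK_{-z,-z',\xi}(x,y)=\delta_{xy}-(-1)^{x-y}\unK_{z,z',\xi}(-x,-y)$, and feeding this into \eqref{f1.2} produces the factor $+1$ when $x$ and $y$ lie on the same side of $0$ and $-1$ otherwise, i.e.\ the factor $\operatorname{sgn}(x)\operatorname{sgn}(y)$. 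That is what the displayed formula must mean: read literally with $\operatorname{sgn}\in\{\pm1\}$ the expression $(-1)^{\operatorname{sgn}(x)\operatorname{sgn}(y)}$ would equal $-1$ in every case, contradicting positivity of the density on the diagonal, and your diagonal consistency check correctly detects the right normalization. Two small caveats. In Step 2, the inference from equality of all principal minors to gauge equivalence of the kernels is not automatic (it requires a nonvanishing/irreducibility hypothesis); but you do not actually use it, so nothing is lost. And the sign ``bookkeeping'' in Step 3 is announced rather than carried out --- it is routine but it is where the content lives, so a complete write-up should display the four cases of $(\operatorname{sgn}x,\operatorname{sgn}y)$. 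What your approach buys is independence from the external reference and an explanation of \emph{why} the sign appears (it is the interaction of the spectral inversion $\unK\mapsto1-\unK$ with the particle/hole normalization \eqref{f1.2}); what the paper's citation buys is brevity.
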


This follows from  \cite[Theorem 7.2]{BO3}, see the comments to this theorem.
Passing to the limit as $\xi\to1$, we get the same property for the limit
kernel $K_{z,z'}(x,y)$.

\section{Multiplicative functionals and Fredholm determinants}\label{2}

\subsection{Generalities}\label{2-1} Let $\X$ be a countable set. Below we
will return to $\X=\Z'$ but at this moment we do not need any structure on
$\X$.

As in Subsection \ref{1-1}, we mean by a {\it configuration\/} in $\X$ an
arbitrary subset $X\subseteq\X$ and we denote by $\Conf(\X)$ the space of all
configurations. Again, we equip $\Conf(\X)$ with the topology determined by the
identification $\Conf(\X)$ with the infinite product space $\{0,1\}^{\X}$; then
$\Conf(\X)$ becomes a metrizable compact topological space. We also endow
$\Conf(\X)$ with the corresponding Borel structure.

Given a configuration $X\in\Conf(\X)$, let $1_X$ denote its indicator function:
for $x\in\mathfrak X$, the value $1_X(x)$ equals 1 or 0 depending on whether
$x$ belongs or not to $X$. Viewing $1_X$ as a collection of $0$'s and $1$'s
indexed by points $x\in\X$ we just get the identification of $\Conf(\X)$ with
$\{0,1\}^{\X}$.

A function $F(X)$ on $\Conf(\X)$ is said to be a {\it cylinder function\/} if
it depends only on the intersection $X\cap Y$ with some finite subset
$Y\subset\X$. Cylinder functions are continuous and form a dense subalgebra in
$C(\Conf(\X))$, the Banach algebra of continuous functions on the compact space
$\Conf(\X)$. We will need cylinder functions in Section \ref{4}.

\subsection{Multiplicative functionals $\Phi_f$\,}\label{2-2} To a function
$f(x)$ on $\mathfrak X$, we would like to assign a {\it multiplicative
functional\/} on $\Conf(\mathfrak X)$ by means of the formula
$$
\Phi_f(X)=\prod_{x\in X}(1+f(x))=\prod_{x\in\X}(1+1_X(x)f(x)), \quad
X\in\Conf(\mathfrak X).
$$
Let us say that $\Phi_f(X)$ is {\it defined at $X$\/} if the above product is
absolutely convergent, which is equivalent to saying that the sum
\begin{equation}\label{f2.1}
\sum_{x\in X}|f(x)|=\sum_{x\in\X}1_X(x)|f(x)|
\end{equation}
is finite. Thus, the domain of definition for $\Phi_f$ is the set of all
configurations $X$ for which \eqref{f2.1} is finite.

Obviously, this set coincides with the whole space $\Conf(\X)$ if and only if
$f$ belongs to $\ell^1(\mathfrak X)$. In particular, this happens if $f$
vanishes outside a finite subset $Y\subset\X$, and then $\Phi_f$ is simply a
cylinder function. However, we will need to deal with multiplicative
functionals which are defined on a proper subset of $\Conf(\X)$ only. Observe
that for any function $f$, the domain of definition of $\Phi_f$ is a Borel
subset in $\Conf(\mathfrak X)$ (more precisely, a subset of type $F_\sigma$),
and $\Phi_f$ is a Borel function on this subset, because $\Phi_f$ is a
pointwise limit of cylinder functions.

Given a probability measure $P$ on $\Conf(\mathfrak X)$, it is important for us
to see if the domain of definition of $\Phi_f$ is of full $P$--measure. Here is
a simple sufficient condition for this, expressed in terms of the density
function $\rho_1(x)$. Recall its meaning: $\rho_1(x)$ is the probability that
the random (with respect to $P$) configuration $X$ contains $x$.

\begin{proposition}\label{2.1} Let $P$ be a probability measure on $\Conf(\mathfrak X)$,
$\rho_1(x)$ be its density function, and $f(x)$ be a function on $\mathfrak X$.
If
$$
\sum_{x\in\mathfrak X}\rho_1(x)|f(x)|<\infty
$$
then the multiplicative functional $\Phi_f(X)$ is defined $P$--almost
everywhere on\/ $\Conf(\mathfrak X)$.
\end{proposition}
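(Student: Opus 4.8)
The plan is to exploit the criterion recalled just before the statement: by \eqref{f2.1}, the functional $\Phi_f$ is defined at $X$ precisely when
$$
S(X):=\sum_{x\in X}|f(x)|=\sum_{x\in\X}1_X(x)|f(x)|
$$
is finite. Since $S$ is a nonnegative Borel function on $\Conf(\X)$ (being a monotone limit of cylinder functions), it suffices to show that $S<\infty$ $P$--almost everywhere, and the natural route is to bound its $P$--expectation $\langle S,P\rangle$.

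First I would integrate term by term. As every summand $1_X(x)|f(x)|$ is nonnegative, Tonelli's theorem permits interchanging the summation over $x\in\X$ with the integration against $P$, so that
$$
\langle S,P\rangle=\sum_{x\in\X}|f(x)|\int_{\Conf(\X)}1_X(x)\,P(dX)
=\sum_{x\in\X}|f(x)|\,\rho_1(x).
$$
Here the inner integral equals $P\{X:x\in X\}=\rho_1(x)$ by the very definition of the density function recalled above. By the hypothesis of the proposition, the right-hand side is finite.

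The conclusion then follows from the elementary fact that a nonnegative measurable function with finite integral is finite almost everywhere: since $\langle S,P\rangle<\infty$, we have $S(X)<\infty$ for $P$--almost all $X$, which is exactly the assertion that $\Phi_f$ is defined $P$--almost everywhere on $\Conf(\X)$.

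I do not expect any genuine obstacle here. The only point deserving a word of care is the term-by-term integration, but this is immediate from the nonnegativity of the summands (Tonelli), so no absolute-convergence assumption is needed at this stage; the passage from finite expectation to almost-everywhere finiteness is standard.
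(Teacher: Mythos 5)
Your argument is exactly the paper's own proof: regard the sum $\sum_{x\in X}|f(x)|$ as a nonnegative function of $X$, compute its expectation termwise (Tonelli) to obtain $\sum_{x}\rho_1(x)|f(x)|<\infty$, and conclude that the sum is finite $P$--almost everywhere. The proposal is correct and adds only the explicit mention of Tonelli, which the paper leaves implicit.
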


\begin{proof}
Regard the quantity \eqref{f2.1} as a function in $X$, with  values in
$[0,+\infty]$. This function is almost everywhere finite if its expectation is
finite. Now, take the expectation of the right--hand side of \eqref{f2.1}.
Since the expectation of $1_X(x)$ is $\rho_1(x)$, the result is
$\sum_{x\in\mathfrak X}\rho_1(x)|f(x)|$, which is finite by the assumption.
\end{proof}

Fix a function $r(x)>0$ on $\X$. Let us say that a configuration $X$ is {\it
$r$--sparse\/} (more precisely, sparse with respect to weight $r^{-1}$) if the
series
$$
\sum_{x\in X}r^{-1}(x)=\sum_{x\in\mathfrak X}1_X(x)r^{-1}(x)
$$
converges. Let $\Conf_r(\X)$ denote the subset of all $r$--sparse
configurations. If the series $\sum_{x\in\mathfrak X}r^{-1}(x)$ converges then
obviously $\Conf_r(\X)=\Conf(\X)$; otherwise $\Conf_r(\X)$ is a proper subset
of $\Conf(\X)$. Note that it is a Borel subset (more precisely, a subset of
type $F_\sigma$).

\begin{proposition}\label{2.2}
Let $P$ be a probability measure on\/ $\Conf(\X)$ and
$\rho_1(x)$ be its density function. If
$$
\sum_{x\in\X}\rho_1(x)r^{-1}(x)<\infty
$$
then $P$ is concentrated on the Borel subset\/ $\Conf_r(\X)$.
\end{proposition}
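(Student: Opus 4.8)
The plan is to mimic the argument just used for Proposition \ref{2.1}, observing that being $r$--sparse is exactly the condition that the nonnegative sum $\sum_{x\in X}r^{-1}(x)$ be finite. First I would regard the quantity
$$
S(X):=\sum_{x\in\X}1_X(x)\,r^{-1}(x)
$$
as a Borel function on $\Conf(\X)$ with values in $[0,+\infty]$; by definition a configuration $X$ is $r$--sparse precisely when $S(X)<\infty$. Thus it suffices to show that $S(X)<\infty$ for $P$--almost every $X$, which is the same as saying $P(\Conf_r(\X))=1$, i.e.\ that $P$ is concentrated on $\Conf_r(\X)$.

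The key step is to compute the expectation of $S$ with respect to $P$. Since all the summands $1_X(x)r^{-1}(x)$ are nonnegative, Tonelli's theorem (equivalently, monotone convergence applied to the partial sums over finite subsets of $\X$) permits interchanging the summation over $x$ with the integration against $P$:
$$
\langle S,P\rangle=\sum_{x\in\X}r^{-1}(x)\,\langle 1_{(\,\cdot\,)}(x),P\rangle=\sum_{x\in\X}\rho_1(x)\,r^{-1}(x),
$$
where I have used that the expectation of the function $X\mapsto 1_X(x)$ is, by the very meaning of the density function, the probability that the random configuration contains $x$, namely $\rho_1(x)$. By hypothesis this last sum is finite.

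Finally, a nonnegative Borel function with finite integral is finite almost everywhere; hence $S(X)<\infty$ for $P$--almost all $X$, which is the desired conclusion. I do not expect any genuine obstacle here: the only point requiring care is the legitimacy of exchanging summation and expectation, and this is immediate from the nonnegativity of all terms. In fact the statement is essentially the special case $|f|=r^{-1}$ of the mechanism behind Proposition \ref{2.1}, with the phrase ``domain of definition of $\Phi_f$'' replaced by ``set of $r$--sparse configurations''.
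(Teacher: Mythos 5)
Your proposal is correct and follows exactly the paper's own argument: define the nonnegative function $\varphi(X)=\sum_{x\in\X}1_X(x)r^{-1}(x)$, compute its expectation term by term to get $\sum_{x\in\X}\rho_1(x)r^{-1}(x)<\infty$, and conclude finiteness almost everywhere. The only difference is that you spell out the Tonelli/monotone-convergence justification for the interchange, which the paper leaves implicit.
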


\begin{proof}  The argument is the same as in the proof of Proposition \ref{2.1}.
Consider the function
$$
\varphi(X)=\sum_{x\in\X}1_X(x)r^{-1}(x), \qquad X\in\Conf(\X),
$$
which is allowed to take the value $+\infty$. We have to prove that
$\varphi(X)$ is finite almost surely with respect to the measure $P$. This is
obvious, because the expectation of $\varphi$ equals
$$
\sum_{x\in\X}\rho_1(x)r^{-1}(x),
$$
which is finite by the assumption.
\end{proof}

Consider the correspondence $X\mapsto m_X$ that assigns to a configuration $X$
the function $m_X(x)=1_X(x)r^{-1}(x)$ on $\X$. This correspondence determines
an embedding of the set $\Conf_r(\X)$ into the Banach space $\ell^1(\X)$. Using
this embedding we equip $\Conf_r(\X)$ with the topology induced by the norm
topology of $\ell^1(\X)$. Let us call this topology the {\it
$\ell^1$--topology\/}; of course, the definition depends on the choice of the
function $r(x)$. If the series $\sum_{x\in\mathfrak X}r^{-1}(x)$ diverges then
the $\ell^1$--topology on $\Conf_r(\X)$ is stronger than that induced by the
canonical topology of the space $\Conf(\X)$.

\begin{proposition}\label{2.3} Let $f$ be a function on $\X$ such that the function
$|f(x)|r(x)$ is bounded. Then the multiplicative functional $\Phi_f$ is well
defined on $\Conf_r(\X)$. Moreover, $\Phi_f$ is continuous in the
$\ell^1$--topology of\/ $\Conf_r(\X)$ defined above.
\end{proposition}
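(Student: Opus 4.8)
The plan is to exploit the hypothesis in the quantitative form $|f(x)|\le C\,r^{-1}(x)$, where $C:=\sup_x|f(x)|r(x)<\infty$. The well-definedness of $\Phi_f$ on $\Conf_r(\X)$ is then immediate: for any $X\in\Conf_r(\X)$ one has $\sum_{x\in X}|f(x)|\le C\sum_{x\in X}r^{-1}(x)<\infty$ by $r$-sparseness, so the product defining $\Phi_f(X)$ is absolutely convergent by the criterion recalled in Subsection \ref{2-2}. The substance of the proposition is the continuity statement, and here I would prove something stronger than plain continuity, namely a local Lipschitz bound with respect to the $\ell^1$-metric $d(X,Y):=\|m_X-m_Y\|_{\ell^1}=\sum_{x\in\X}|1_X(x)-1_Y(x)|\,r^{-1}(x)$.

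The key step is an elementary telescoping estimate for products. For finitely many numbers $a_1,\dots,a_n$ and $b_1,\dots,b_n$, writing the difference as the telescoping sum $\prod_i(1+a_i)-\prod_i(1+b_i)=\sum_j\bigl(\prod_{i<j}(1+a_i)\bigr)(a_j-b_j)\bigl(\prod_{i>j}(1+b_i)\bigr)$ and bounding each partial product by $\exp(\sum_i|a_i|)$, resp.\ $\exp(\sum_i|b_i|)$, gives
$$
\Bigl|\prod_i(1+a_i)-\prod_i(1+b_i)\Bigr|\le e^{2M}\sum_i|a_i-b_i|,\qquad M:=\max\Bigl(\textstyle\sum_i|a_i|,\ \sum_i|b_i|\Bigr).
$$
I would apply this with $a_x=1_X(x)f(x)$ and $b_x=1_Y(x)f(x)$, $x\in\X$. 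Since $\sum_x|a_x-b_x|=\sum_x|1_X(x)-1_Y(x)|\,|f(x)|\le C\,d(X,Y)$, while within the $\ell^1$-ball $B_S:=\{\,Z:\|m_Z\|_{\ell^1}\le S\,\}$ both $\sum_x|a_x|$ and $\sum_x|b_x|$ are bounded by $CS$, passing to the limit over an exhaustion of $\X$ by finite subsets (legitimate by the absolute convergence established above) yields, for $X,Y\in B_S$,
$$
|\Phi_f(X)-\Phi_f(Y)|\le C\,e^{2CS}\,d(X,Y).
$$
As every point $X_0\in\Conf_r(\X)$ has an $\ell^1$-neighborhood contained in some $B_S$, this local Lipschitz estimate gives continuity in the $\ell^1$-topology.

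I expect the main subtlety to be the following. One might first try to approximate $\Phi_f$ by the cylinder functions $\Phi_f^F(X)=\prod_{x\in X\cap F}(1+f(x))$, $F\subset\X$ finite, which are automatically $\ell^1$-continuous; but this route fails, because $\Phi_f^F\to\Phi_f$ is \emph{not} uniform on $\ell^1$-balls: for configurations $X$ avoiding $F$ the tail sum $\sum_{x\in X\setminus F}|f(x)|$ can remain comparable to $S$ no matter how large $F$ is, precisely because the series $\sum_x r^{-1}(x)$ is divergent in the case of interest. This is why one must control the \emph{difference} $\Phi_f(X)-\Phi_f(Y)$ directly through $d(X,Y)$, as the telescoping estimate does, rather than through a single truncation. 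Apart from this point the argument is routine, the only further care being the justification of the passage from finite to infinite products, which is supplied by the absolute convergence guaranteed in the well-definedness part.
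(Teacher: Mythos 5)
Your proof is correct and follows essentially the same route as the paper: both reduce the problem to viewing $\Phi_f(X)=\prod_{x\in\X}(1+m_X(x)g(x))$ with $g=fr\in\ell^\infty(\X)$ and $m_X\in\ell^1(\X)$, the paper disposing of continuity by a one-line appeal to the continuity of the $\ell^1$--$\ell^\infty$ pairing. Your telescoping estimate merely supplies the explicit local Lipschitz bound that the paper leaves implicit, and your closing remark about why truncation to cylinder functions does not converge uniformly on $\ell^1$--balls is a correct and pertinent observation.
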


\begin{proof} The first claim is trivial. Indeed, set $g(x)=f(x)r(x)$. This
function is in $\ell^\infty(\X)$ while the function $m_X(x)$ is in
$\ell^1(\X)$. Since the quantity \eqref{f2.1} can be represented as the pairing
between $|g|$ and $m_X$, we conclude that \eqref{f2.1} is finite.

To prove the second claim we observe that
$$
\Phi_f(X)=\prod_{x\in\X}(1+m_X(x)g(x)).
$$
Now the claim readily follows from the fact that the pairing between
$g\in\ell^\infty(\X)$ and $m_X\in\ell^1(\X)$ is continuous in the second
argument.
\end{proof}

\begin{example}\label{2.4} Let $\X=\Z'$ and $P=P_{z,z'}$. We know from Corollary
\ref{1.6} that in this concrete case the density function $\rho_1(x)$ on $\Z'$
decays as $|x|^{-1}$ as $x\to\pm\infty$. Then Proposition \ref{2.2} says that
$P_{z,z'}$ is concentrated on $\Conf_r(\Z')$ provided that the function
$r(x)>0$ on $\Z'$ is such that the series $\sum_{x\in\Z'}r^{-1}(x)|x|^{-1}$
converges. For instance, one may take $r(x)=|x|^\de$ with any $\de>0$. (Later
on we will choose $r(x)=|x|$.)

Proposition \ref{2.3} says that a multiplicative functional $\Phi_f$ is well
defined on $\Conf_r(\Z')$ if $f(x)=O(r^{-1}(x))$ as $x\to\pm\infty$. In
particular, $\Phi_f(X)$ is well defined for $P_{z,z'}$--almost all
configurations $X$ provided that $f$ satisfies the above condition for a
positive function $r$ such that $\sum_{x\in\Z'}r^{-1}(x)|x|^{-1}<\infty$. This
condition on $f$ essentially coincides with the condition of Proposition
\ref{2.1} (only that proposition avoids the intermediation of $r$), which is
not surprising because the both propositions exploit the same idea.
\end{example}

\subsection{Condition of integrability for $\Phi_f$\,} \label{2-3} Let again
$\mathfrak X$ be a countable set and $P$ be a probability Borel measure on
$\Conf(\mathfrak X)$. Here we give a condition for $\Phi_f$ to be not only
defined $P$--almost everywhere but also to have finite expectation. The
condition involves the correlation functions of all orders. It is convenient to
combine them into a single function $\rho(X')$ defined on arbitrary finite
subsets $X'\subset\mathfrak X$: By definition, $\rho(X')$ equals the
probability of the event that the random configuration $X$ contains $X'$.

Let $X'\Subset X$ mean that $X'$ is a finite subset of $X$. By $\mathbb
E_P(\,\cdot\,)$ we denote expectation with respect to $P$.

\begin{proposition}\label{2.5}
Let $f(x)$ be a function on $\mathfrak X$ such that
$$
\sum_{X'\Subset\mathfrak X}\rho(X')\prod_{x\in X'}|f(x)|<\infty.
$$
Then the multiplicative functional $\Phi_f$ is defined almost everywhere with
respect to $P$, is absolutely integrable, and its expectation equals
\begin{equation}\label{f2.2}
\mathbb E_P(\Phi_f)=\sum_{X'\Subset\mathfrak X}\rho(X')\prod_{x\in X'}f(x).
\end{equation}
\end{proposition}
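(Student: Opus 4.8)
The plan is to reduce everything to a single interchange of summation and expectation, legitimized by passing first through the nonnegative version of the statement. The starting point is the elementary expansion of the multiplicative functional as a sum over finite subconfigurations. Whenever $\sum_{x\in X}|f(x)|<\infty$ the infinite product defining $\Phi_f(X)$ converges absolutely, and standard facts on absolutely convergent products give
$$
\Phi_f(X)=\prod_{x\in X}(1+f(x))=\sum_{X'\Subset X}\,\prod_{x\in X'}f(x),
$$
the sum ranging over all finite subconfigurations $X'\subseteq X$ (each factor contributes either $1$ or $f(x)$). Rewriting the right-hand side by inserting the indicator $1_{X'\subseteq X}$, I would record the key pointwise identity
$$
\Phi_f(X)=\sum_{X'\Subset\X}\,1_{X'\subseteq X}\prod_{x\in X'}f(x),
$$
valid at every $X$ where $\Phi_f$ is defined.

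First I would treat the nonnegative case, replacing $f$ by $|f|$. Since all terms are then $\ge 0$, Tonelli's theorem applies with no integrability hypothesis and lets me interchange $\mathbb E_P$ with the sum over $X'$. Using that $\mathbb E_P(1_{X'\subseteq X})$ equals $\rho(X')$ by the very definition of $\rho$, this yields
$$
\mathbb E_P(\Phi_{|f|})=\sum_{X'\Subset\X}\rho(X')\prod_{x\in X'}|f(x)|,
$$
which is finite by hypothesis. Two consequences follow at once. The integrand $\Phi_{|f|}(X)=\prod_{x\in X}(1+|f(x)|)$ is finite for $P$-almost every $X$; since its finiteness is equivalent to $\sum_{x\in X}|f(x)|<\infty$, the functional $\Phi_f$ is defined $P$-a.e. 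Moreover $|\Phi_f(X)|\le\Phi_{|f|}(X)$ pointwise, so $\Phi_f$ is absolutely integrable.

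Having secured absolute integrability, I would then run the identical interchange for the signed $f$, now justified by Fubini's theorem — the absolute convergence of the double sum-integral is precisely what the previous step established. This gives the asserted formula
$$
\mathbb E_P(\Phi_f)=\sum_{X'\Subset\X}\rho(X')\prod_{x\in X'}f(x).
$$
The only genuine obstacle is legitimizing the exchange of the expectation with the sum over finite subconfigurations; the device that makes it routine is to carry out the nonnegative (Tonelli) computation first and exploit its finiteness in three ways simultaneously — to establish $P$-a.e.\ definiteness, to supply the dominating function $\Phi_{|f|}$ for absolute integrability, and to verify the hypothesis of the Fubini step. Everything else is the bookkeeping identity $\mathbb E_P(1_{X'\subseteq X})=\rho(X')$ together with the product expansion.
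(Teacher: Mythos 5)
Your proposal is correct and follows essentially the same route as the paper: expand the product as a sum over finite subconfigurations, take expectations term by term using $\mathbb E_P(\eta_{X'})=\rho(X')$, and do the nonnegative case first (Tonelli) so that its finiteness simultaneously yields a.e.\ definedness, the dominating function $\Phi_{|f|}$, and the justification for the signed interchange. The only cosmetic difference is that the paper obtains the a.e.\ definedness by citing Proposition \ref{2.1} rather than reading it off from the finiteness of $\mathbb E_P(\Phi_{|f|})$.
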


\begin{proof} The above condition on $f$ is stronger than the condition of
Proposition \ref{2.1}, so that the first claim follows from Proposition
\ref{2.1}. Checking the second and third claims uses the same argument as in
that proposition.

Observe that
$$
\prod_{x\in X}(1+|f(x)|)=\sum_{X'\Subset X}\prod_{x\in X'}|f(x)|
$$
in the sense that the both sides are simultaneously either finite or infinite,
and if they are finite then they are equal.

Denote by $\eta_{X'}(X)$ the function on $\Conf(\mathfrak X)$ equal to 1 or 0
depending on whether $X$ contains $X'$ or not. The above equality can be
rewritten as
$$
\prod_{x\in X}(1+|f(x)|)=\sum_{X'\Subset \mathfrak X}\eta_{X'}(X)\prod_{x\in
X'}|f(x)|.
$$
Take the expectation of the both sides. Since $\mathbb
E_P(\eta_{X'})=\rho(X')$, we get
$$
\mathbb E_P(\Phi_{|f|})=\sum_{X'\Subset\mathfrak X}\rho(X')\prod_{x\in
X'}|f(x)|<\infty.
$$
Thus, we have checked the second and third claims for the function $|f|$.

Since $|\Phi_f(X)|\le\Phi_{|f|}(X)$, it follows that $\Phi_f$ is absolutely
integrable. Now we can repeat the above argument with $f$ instead of $|f|$. The
above computation with $|f|$ provides the necessary justification for
manipulations with infinite sums.
\end{proof}

\subsection{Fredholm determinants} \label{2-4} Let $\mathfrak X$ and $P$ be as in
Subsection \ref{2-3}, and assume additionally that $P$ is determinantal with a
correlation kernel $K(x,y)$ corresponding to a bounded operator $K$ in the
Hilbert space $H:=\ell^2(\mathfrak X)$ (so $K$ is the correlation operator of
$P$).

For a bounded function $f(x)$ on $\mathfrak X$, we denote by $A_f$ the operator
in $H$ given by multiplication by $f$.

\begin{lemma}\label{2.6} If $f$ is finitely supported then
$$
\mathbb E_P(\Phi_f)=\det(1+A_fK).
$$
\end{lemma}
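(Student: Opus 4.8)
The plan is to prove the finite-dimensional identity $\mathbb E_P(\Phi_f)=\det(1+A_fK)$ by reducing both sides to the same finite sum over subsets of the (finite) support of $f$. Since $f$ is finitely supported, write $Y=\{y\in\X\mid f(y)\neq0\}$, a finite set. The multiplicative functional $\Phi_f(X)=\prod_{x\in X}(1+f(x))$ then depends only on $X\cap Y$, so $\Phi_f$ is a genuine cylinder function and no convergence issues arise; in particular Proposition~\ref{2.5} applies trivially and gives the expectation as a \emph{finite} sum. The key step is therefore to match that combinatorial sum against the expansion of the Fredholm determinant.

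First I would expand the left-hand side using Proposition~\ref{2.5}, which yields
$$
\mathbb E_P(\Phi_f)=\sum_{X'\Subset\X}\rho(X')\prod_{x\in X'}f(x).
$$
Because $\prod_{x\in X'}f(x)$ vanishes unless $X'\subseteq Y$, the sum collapses to a sum over subsets $X'\subseteq Y$, and by the determinantal property $\rho(X')=\det[K(x_i,x_j)]_{x_i,x_j\in X'}$ is a principal minor of $K$ indexed by $X'$. Next I would expand the right-hand side. Since $A_f$ is multiplication by the finitely supported function $f$, the operator $A_fK$ has finite rank (its range lies in the finite-dimensional span of $\{e_y\}_{y\in Y}$), so $\det(1+A_fK)$ is an honest finite determinant and the standard minor expansion of a Fredholm determinant applies:
$$
\det(1+A_fK)=\sum_{X'\Subset\X}\det[(A_fK)(x_i,x_j)]_{x_i,x_j\in X'}.
$$
The matrix entry is $(A_fK)(x,y)=f(x)K(x,y)$, so again only $X'\subseteq Y$ contribute, and factoring $f(x_i)$ out of the $i$-th row of each minor gives $\det[f(x_i)K(x_i,x_j)]=\big(\prod_{x\in X'}f(x)\big)\det[K(x_i,x_j)]$.

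Comparing the two expansions term by term over subsets $X'\subseteq Y$ shows they agree, since each contributes $\big(\prod_{x\in X'}f(x)\big)\rho(X')$. This completes the proof. The only point requiring a word of care is the passage from the Fredholm determinant to its finite minor expansion: I would justify it by noting that for a finite-rank (indeed finite-matrix) operator the Fredholm expansion terminates and reduces to the classical identity $\det(1+M)=\sum_{X'}\det M_{X'}$ summed over principal submatrices $M_{X'}$. I do not expect any serious obstacle here; the substance of the lemma is purely the bookkeeping matching of a subset-sum of principal minors on each side, and the finiteness of $\operatorname{supp}(f)$ removes all analytic difficulties. The real work lies entirely downstream, where $f$ is no longer finitely supported and the Fredholm determinant must be controlled in the combined topology of $\LL(H)$.
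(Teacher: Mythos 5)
Your proof is correct and follows essentially the same route as the paper's: both sides are expanded via Proposition~\ref{2.5} and the determinantal identity $\rho(X')\prod_{x\in X'}f(x)=\det\left([f(x)K(x,y)]_{x,y\in X'}\right)$, and the Fredholm determinant is reduced to the classical sum-of-principal-minors identity, legitimate here because $A_fK$ has only finitely many nonzero rows. The only cosmetic difference is that you explicitly collapse the sum to subsets of the support of $f$, which the paper leaves implicit.
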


Note that the determinant is well defined because, due to the assumption on
$f$, the operator $A_fK$ has finite--dimensional range.

\begin{proof} This directly follows from \eqref{f2.2}. Indeed, according to the
definition of determinantal measures, $\rho(X')=\det\left([K(x,y)]_{x,y\in
X'}\right)$. This implies
$$
\rho(X')\prod_{x\in X'}f(x)=\det\left([f(x)K(x,y)]_{x,y\in X'}\right).
$$
Then we employ a well--known identity from linear algebra: If $B=[B(x,y)]$ is a
matrix then $\det(1+B)$ equals the sum of the principal minors of $B$. The
identity holds for matrices of finite size but we can apply it to $B=A_fK$
because the matrix $[f(x)K(x,y)]$ has only finitely many nonzero rows. This
gives us the equality
$$
\sum_{X'\Subset\mathfrak X}\det\left([f(x)K(x,y)]_{x,y\in
X'}\right)=\det(1+A_fK),
$$
which concludes the proof.
\end{proof}

The hypothesis of the lemma is, of course, too restrictive: the above argument
can be easily extended to the case when the operator $A_fK$ is of trace class.
A slightly more general fact is established below in Proposition \ref{2.7},
which is specially adapted to the application we need.

First, state a few general results from \cite[Appendix]{BOO}.

Let $H=H_+\oplus H_-$ be a $\Z_2$--graded Hilbert space. Any operator $A$ in
$H$ can be written in block form,
$$
A=\begin{bmatrix} A_{++} & A_{+-}\\ A_{-+} & A_{--}
\end{bmatrix}
$$
where $A_{++}$ acts from $H_+$ to $H_+$, $A_{+-}$ acts from $H_-$ to $H_+$,
etc. Let $\LL(H)$ be the set of bounded operators $A$ whose diagonal blocks
$A_{++}$ and $A_{--}$ are trace class operators while the off--diagonal blocks
$A_{+-}$ and $A_{-+}$ are Hilbert--Schmidt operators. The set $\LL(H)$ is an
algebra. We equip it with the corresponding combined topology: the topology of
the trace class norm $\Vert\cdot\Vert_1$ for the diagonal blocks and the
topology of the Hilbert--Schmidt norm $\Vert\cdot\Vert_2$ for the off--diagonal
blocks.

There exists a unique continuous function on $\LL(H)$,
$$
A\to\det(1+A),
$$
coinciding with the conventional determinant when $A$ is a
finite rank operator. This function can be defined as
$$
\det(1+A)=\det((1+A)e^{-A})e^{\tr(A_{++})+\tr(A_{--})},
$$
where the determinant in the right--hand side is the conventional one: the
point is that  $A\mapsto(1+A)e^{-A}-1$ is a continuous map from $\LL(H)$ to the
set of trace class operators.

If $\{E_N\}_{N=1,2,\dots}$ is an ascending chain of projection operators in $H$
strongly convergent to 1 then
\begin{equation}\label{f2.3}
\det(1+A)=\lim_{N\to\infty}(1+E_NAE_N).
\end{equation}

{}From now on we assume $\mathfrak X=\Z'$ and we set

$$
H=\ell^2(\Z'), \quad H_+=\ell^2(\Z'_+), \quad  H_-=\ell^2(\Z'_-).
$$
As before, $\{e_x\}_{x\in\Z'}$ denotes the natural basis in $H$.

\begin{proposition}\label{2.7} Let $P$ be a determinantal probability measure on
$\Conf(\Z')$,  $K(x,y)$ be its correlation kernel and $K$ denote the
corresponding correlation operator in $H=\ell^2(\Z')$.

Further, assume that $f(x)$ is a function on $\Z'$ which can be written in the
form $f(x)=g(x)h^2(x)$, where $g(x)$ is bounded and $h$ is nonnegative and such
that $A_h K A_h\in\LL(H)$.

Then the functional $\Phi_f$ is defined almost everywhere with respect to $P$,
is absolutely integrable with respect to $P$, and
$$
\mathbb E_P(\Phi_f)=\det(1+A_gA_h K A_h).
$$
\end{proposition}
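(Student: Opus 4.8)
The plan is to prove Proposition \ref{2.7} by a truncation-and-limit argument, approximating $f$ by finitely supported functions and passing to the limit using the continuity of the Fredholm determinant on $\LL(H)$ established just above in the BOO framework. The statement of Lemma \ref{2.6} already handles finitely supported $f$, so the essential task is to upgrade that identity to the present more singular setting, where $A_f K$ need not be trace class but the symmetrized operator $A_h K A_h$ lies in $\LL(H)$.

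First I would set up the approximation. Let $\{Y_N\}$ be an ascending chain of finite subsets of $\Z'$ exhausting $\Z'$, and let $f_N := 1_{Y_N} f$ be the truncations. Each $f_N$ is finitely supported, so Lemma \ref{2.6} gives $\mathbb E_P(\Phi_{f_N}) = \det(1 + A_{f_N} K)$. On the left-hand side, I would show $\mathbb E_P(\Phi_{f_N}) \to \mathbb E_P(\Phi_f)$ by dominated convergence: the hypothesis $f = g h^2$ with $g$ bounded and $A_h K A_h \in \LL(H)$ should first be used to verify the integrability condition of Proposition \ref{2.5}, after which $|\Phi_{f_N}| \le \Phi_{|f|}$ provides the integrable dominating function and $\Phi_{f_N}(X) \to \Phi_f(X)$ pointwise wherever $\Phi_f$ is defined. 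This simultaneously establishes that $\Phi_f$ is defined $P$-almost everywhere and is absolutely integrable.

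For the right-hand side I would rewrite the determinant in its symmetrized form. Observe the algebraic identity $\det(1 + A_{f_N} K) = \det(1 + A_{g} 1_{Y_N} A_h K A_h)$, valid because $A_{f_N} = A_g A_{1_{Y_N}} A_h A_h$ and $\det(1+BC)=\det(1+CB)$ cyclically rearranges the factors; here I use $f = g h^2$ to distribute the two copies of $h$ symmetrically around $K$. The point of this rewriting is that $A_h K A_h \in \LL(H)$ by hypothesis, and multiplication by the bounded operator $A_g A_{1_{Y_N}}$ keeps the product inside $\LL(H)$. I would then argue that $A_g A_{1_{Y_N}} A_h K A_h \to A_g A_h K A_h$ in the combined topology of $\LL(H)$: each block of $A_h K A_h$ is multiplied by the truncation $A_{1_{Y_N}}$, which converges to the identity strongly, and strong multiplication by uniformly bounded operators is continuous in both the trace-class norm (on the diagonal blocks) and the Hilbert--Schmidt norm (on the off-diagonal blocks), precisely the two norms defining the combined topology. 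By continuity of $A \mapsto \det(1+A)$ on $\LL(H)$, the right-hand determinants converge to $\det(1 + A_g A_h K A_h)$.

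The main obstacle I anticipate is the convergence of the truncated operators in the $\LL(H)$ topology, specifically on the diagonal blocks, where the demanding trace-class norm is required. For a general trace-class operator $T$, multiplication $A_{1_{Y_N}} T \to T$ in trace-class norm holds because $A_{1_{Y_N}} \to 1$ strongly and left (or right) multiplication by a strongly convergent uniformly bounded net is $\Vert\cdot\Vert_1$-continuous on the trace-class ideal; I would verify this carefully, as it is the one place where boundedness of the multipliers alone is not obviously enough and one must invoke the ideal structure of the trace class. The off-diagonal Hilbert--Schmidt convergence is easier, since the Hilbert--Schmidt class is itself a Hilbert space and strong multiplier convergence transfers directly. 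Once these two block-wise convergences are in hand, matching the two limits yields $\mathbb E_P(\Phi_f) = \det(1 + A_g A_h K A_h)$, completing the proof.
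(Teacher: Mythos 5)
Your overall strategy --- truncate $f$ to finite support, apply Lemma \ref{2.6}, rewrite the determinant in the symmetrized form $\det(1+A_gA_{1_{Y_N}}A_hKA_h)$, and pass to the limit on both sides using the continuity of $A\mapsto\det(1+A)$ on $\LL(H)$ --- is essentially the paper's proof; the paper uses the two-sided compressions $E_N A_hKA_hE_N$ and invokes \eqref{f2.3} where you use one-sided multipliers $A_{1_{Y_N}}$ and prove the corresponding trace-norm/Hilbert--Schmidt convergence by hand, but these are the same computation (indeed $A_{1_{Y_N}}=E_N$ for $Y_N=[-N,N]\cap\Z'$, and for finite-rank perturbations the one- and two-sided versions of the determinant coincide by cyclicity).

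There is, however, one step you assert without justification, and it is the crux rather than a preliminary: the claim that the hypothesis ``$g$ bounded, $A_hKA_h\in\LL(H)$'' can ``first be used to verify the integrability condition of Proposition \ref{2.5}'', i.e.\ that $\sum_{X'\Subset\Z'}\rho(X')\prod_{x\in X'}|f(x)|<\infty$, so that $\Phi_{|f|}$ may serve as a dominating function. There is no separate route from the operator hypothesis to this bound; the only way to get it is to run your own truncation argument first for the nonnegative majorant $\|g\|_\infty h^2$. In that case the partial sums $\sum_{X'\subset Y_N}\rho(X')\prod_{x\in X'}\|g\|_\infty h^2(x)=\det\bigl(1+\|g\|_\infty A_{1_{Y_N}}A_hKA_hA_{1_{Y_N}}\bigr)$ are nondecreasing in $N$ (all terms are nonnegative), so monotone convergence replaces dominated convergence and no a priori integrability is needed; the convergence of the determinants then bounds the full series, which in turn dominates $\sum\rho(X')\prod|f|$. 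This is exactly why the paper organizes the proof as ``$g\equiv1$ first, then general bounded $g$''. As written, your dominated-convergence step is circular: the integrability of the dominating function is established by the very limit identity you are in the process of proving. The fix is purely organizational --- do the nonnegative case first --- but without it the proof is incomplete. (A second, cosmetic point: to pass from $\det(1+A_{f_N}K)$ to $\det(1+A_gA_{1_{Y_N}}A_hKA_h)$ you must cycle one factor $A_h$ from the left of $K$ to the far right, writing $A_{f_N}K=A_h\cdot(A_gA_{1_{Y_N}}A_hK)$; the factors of $A_{f_N}$ do not literally sit ``symmetrically around $K$'' before cycling. For finite-rank operators this is harmless, but state it correctly.)
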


\begin{proof} For $N=1,2,\dots$, let $E_N$ be the projection in $H$ onto the
finite--dimensional subspace of functions concentrated on $[-N,N]\cap\Z'$. As
$N\to\infty$, the projections $E_N$ converge to 1.

Assume first that $g\equiv1$. Then $f(x)$ is nonnegative and the same argument
as in Lemma \ref{2.6} shows that
\begin{gather*}
\det(1+E_NA_h K A_hE_N)=\det(1+A_fE_NKE_N)\\
=\sum_{X'\subset[-N,N]\cap\Z'}\det\left([K(x,y)]_{x,y\in X'}\right)\prod_{x\in X'}f(x)\\
= \sum_{X'\subset[-N,N]\cap\Z'}\rho(X')\prod_{x\in X'}f(x),
\end{gather*}
where we used the fact that $E_N$ and $A_h$ commute. As $N\to\infty$, the
resulting quantity converges to the infinite sum
$$
\sum_{X'\Subset\Z'}\rho(X')\prod_{x\in X'}f(x).
$$
On the other hand, by virtue of \eqref{f2.3},
$$
\det(1+E_NA_gA_h K A_hE_N)\to\det(1+A_h K A_h).
$$
Consequently,
$$
\det(1+A_hKA_h)=\sum_{X'\Subset\Z'}\rho(X')\prod_{x\in X'}f(x)=\mathbb
E_P(\Phi_f),
$$
where the last equality follows from Proposition \ref{2.5}.

For an arbitrary bounded $g$ the computation is the same, and the above
argument with $f\ge0$ is used to check the absolute convergence of the arising
infinite sum and to guarantee applicability of Proposition \ref{2.5}.
\end{proof}

\section{Radon--Nikod\'ym derivatives}\label{3}

Denote by $\mathfrak S$ the group of finitary permutations of the set $\Z'$.
This is a countable group generated by the elementary transpositions
$$
\dots,\si_{-1},\si_0,\si_1,\dots,
$$
where $\si_n$ transposes $n-\frac12$ and
$n+\frac12$. The action of the group $\mathfrak S$ on $\Z'$ induces its action
on $\Conf(\Z')$. For $\si\in\mathfrak S$, we denote the corresponding
transformation of $\Conf(\Z')$ by the same symbol $\si$.

Let us represent configurations $X\in\Conf(\Z')$ as two--sided infinite
sequences of black and white circles separated by vertical bars, like this:
$$
\cdots\bullet\mathop{|}_{n-2}\bullet
\mathop{|}_{n-1}\circ\mathop{|}_{n}\bullet\mathop{|}_{n+1}\circ
\mathop{|}_{n+2}\circ\cdots
$$

Here black and white circles represent particles and holes, respectively, and
the subscripts under the bars are used to mark the positions of integers
interlacing with half--integers. In this picture, the action of $\si_n$ affects
only the two--circle fragment around the $n$th vertical bar and amounts to
replacing  ``$\circ\mathop{|}\limits_{n}\bullet$'' by
``$\bullet\mathop{|}\limits_{n}\circ$'' and vice versa (the fragments
``$\circ\mathop{|}\limits_{n}\circ$'' and
``$\bullet\mathop{|}\limits_{n}\bullet$'' remain intact).

This action preserves the set of Maya diagrams $\unX(\la)$, so that we get an
action of $\mathfrak S$ on $\Y$, which can be directly described as follows:
application of the elementary transposition $\si_n$ to a Young diagram $\la$
amounts to adding or removing a box $(i,j)$ with $j-i=n$, if this operation is
possible. The transformation ``$\bullet\mathop{|}\limits_{n}\circ\to
\circ\mathop{|}\limits_{n}\bullet$'' corresponds to adding a box, and the
inverse transformation corresponds to removing a box. In particular, $\si_0$
adds/removes boxes on the main diagonal of $\la$.

Our aim is to study the transformation of the measures $\unP_{z,z',\xi}$ and
their limits $\unP_{z,z'}$ under the action of $\mathfrak S$. However, for
technical reasons, it is more convenient to deal with the measures
$P_{z,z',\xi}$ and $P_{z,z'}$ which are related to the former measures by the
particle/hole involution on $\Z'_-$. To this end we introduce the {\it modified
action\/} of $\mathfrak S$ on $\Conf(\Z')$: it differs from the natural one by
conjugation with the particle/hole involution. Given $\si\in\mathfrak S$, we
denote the modified action of $\si$ on $\Conf(\Z')$ by the symbol $\tsi$. The
relation between $\si$ and $\tsi$ is
$$
\tsi(X)=\inv(\si(\inv(X))), \qquad \si\in\mathfrak S, \quad X\in\Conf(\Z').
$$
The modified transformations $\tsi:\Conf(\Z')\to\Conf(\Z')$ induce
transformations of measures denoted as $P\mapsto \tsi(P)$.

Let us emphasize that the modified action is defined only on $\Conf(\Z')$, not
on $\Z'$ itself.

In the case of elementary transpositions $\si=\si_n$, the modified action
differs from the natural one for $n=0$ only. Namely, the modified action of
$\si_0$ amounts to switching ``$\circ\mathop{|}\limits_{0}\circ\leftrightarrow
\bullet\mathop{|}\limits_{0}\bullet$'', while the fragments
``$\bullet\mathop{|}\limits_{0}\circ$'' and
``$\circ\mathop{|}\limits_{0}\bullet$'' remain intact.

Recall that a finite configuration $X\Subset\Z'$ has the form $\inv(\unX(\la))$
with $\la\in\Y$ if and only if $X$ is {\it balanced\/} in the sense that
$|X\cap\Z'_+|=|X\cap\Z'_-|$. Since the initial action of $\mathfrak S$
preserves the set of the semi--infinite configurations of the form $\unX(\la)$,
the modified action preserves the set of the finite balanced configurations.
Obviously, if $\si\in\mathfrak S$,  $\unX=\unX(\la)$, and
$X=X(\la)=\inv(\unX)$, then we have
$$
\frac{\si(\unP_{z,z',\xi})(\unX)}{\unP_{z,z',\xi}(\unX)}
=\frac{\tsi(P_{z,z',\xi})(X)}{P_{z,z',\xi}(X)}\,.
$$
We introduce a special notation for this Radon--Nikod\'ym derivative:
\begin{equation}\label{f3.1}
\mu_{z,z',\xi}(\si,X):=\frac{\tsi(P_{z,z',\xi})(X)}{P_{z,z',\xi}(X)}
=\frac{P_{z,z',\xi}(\tsi^{-1}(X))}{P_{z,z',\xi}(X)}, \qquad \si\in\mathfrak S,
\end{equation}
where $X$ is a finite balanced configuration.

In the remaining part of the section we prove the following result.

\begin{proposition}\label{3.1} Fix an arbitrary couple $(z,z')$ of parameters
belonging to the principal or complementary series. Let $\xi$ range over
$(0,1)$ and $X$ range over the set of finite balanced configurations on $\Z'$.

For any fixed $\si\in\mathfrak S$, the Radon--Nikod\'ym derivative \eqref{f3.1}
can be written as a finite linear combination of multiplicative functionals of
the form $\Phi_f$ multiplied by factors $\xi^k$ with $k\in\Z$, where each
function $f(x)$ decays at infinity at least as $|x|^{-1}${\rm:}
\begin{equation}\label{f3.2}
\begin{split}
\mu_{z,z',\xi}(\si,X)=\sum_{i=1}^m a_i\xi^{k_i}\Phi_{f_i}(X), \\
a_i\in\R, \quad k_i\in\Z, \quad f_i(x)=O(|x|^{-1}).
\end{split}
\end{equation}
\end{proposition}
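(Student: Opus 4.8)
The plan is to exploit that $\mathfrak S$ is generated by the elementary transpositions $\si_n$ and that the quantity \eqref{f3.1} is a multiplicative \emph{cocycle}. Since the modified action is a homomorphism ($\widetilde{\si\tau}=\tsi\,\widetilde\tau$), a direct manipulation of \eqref{f3.1} gives
$$
\mu_{z,z',\xi}(\si\tau,X)=\mu_{z,z',\xi}(\si,X)\cdot\mu_{z,z',\xi}(\tau,\tsi^{-1}(X)).
$$
Let $\mathcal C$ denote the set of finite linear combinations $\sum a_i\xi^{k_i}\Phi_{f_i}$ with $a_i\in\R$, $k_i\in\Z$ and $f_i(x)=O(|x|^{-1})$ — exactly the class appearing in \eqref{f3.2}. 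I would first record three stability properties: (i) $\mathcal C$ is an algebra, because $\Phi_f\Phi_g=\Phi_{f+g+fg}$ and $f+g+fg=O(|x|^{-1})$ when $f,g=O(|x|^{-1})$; (ii) $\mathcal C$ contains every cylinder function, since $1_X(x)=\Phi_{\delta_x}(X)-1$ and products of indicators are handled by (i); and (iii) $\mathcal C$ is stable under $F\mapsto F\circ\tsi^{-1}$ for any $\si\in\mathfrak S$. Property (iii) is the one driving the induction: since $\tsi^{-1}$ alters occupation numbers only inside a finite window $W$, the product defining $\Phi_f(\tsi^{-1}(X))$ splits into the sites outside $W$, which reassemble into $\Phi_{f\cdot 1_{W^c}}(X)\in\mathcal C$, and the finitely many sites inside $W$, which contribute a cylinder factor, again in $\mathcal C$ by (ii). Granting (i)--(iii), the cocycle identity and induction on word length reduce the proposition to a single generator $\si=\si_n$.

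The core is then the explicit evaluation of $\mu_{z,z',\xi}(\si_n,X)$. I would translate everything back to the Maya diagram $\unX=\inv(X)$, on which $M_{z,z',\xi}$ lives and where $\tsi_n$ becomes the ordinary $\si_n$, i.e.\ the addition or removal of a box of content $n$ (a particle hopping between the adjacent sites $n-\tfrac12$ and $n+\tfrac12$ of $\unX$). Writing $X=X(\la)$ and $\mu=\la\pm\square$, the derivative equals $M_{z,z',\xi}(\mu)/M_{z,z',\xi}(\la)$, which by \eqref{f1.1} factors as the elementary term $\xi^{\pm1}\big((z+n)(z'+n)\big)^{\pm1}$, coming from $\xi^{|\la|}(z)_\la(z')_\la$, times the \emph{square} of the ratio of normalized dimensions $\dim/|\cdot|!$. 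For the latter I would use the hook-length formula: adding the box changes only the hooks in its arm and leg, and a short bookkeeping in the Maya picture yields
$$
\frac{\dim\mu}{(|\la|+1)\dim\la}
=\prod_{\substack{p\in\unX\\ p>n+\frac12}}\frac{p-(n+\tfrac12)}{p-(n-\tfrac12)}
\ \cdot\!\!\prod_{\substack{q\notin\unX\\ q<n-\frac12}}\frac{(n-\tfrac12)-q}{(n+\tfrac12)-q}\,,
$$
the two products (both finite, as $X$ is finite) running over the particles of $\unX$ to the right and the holes of $\unX$ to the left of the moved box.

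It remains to rewrite this product, and its square, as an element of $\mathcal C$ in the variable $X$. Applying $\inv$ (which flips particles and holes on $\Z'_-$ and is trivial on $\Z'_+$) turns the product over negative holes of $\unX$ into a product over particles of $X$ there, while the finitely many positive holes between $0$ and the moved box give a cylinder factor; the product over particles of $\unX$ to the right is already a product over $X_+$. Each resulting infinite product has the form $\prod_{x\in X}(1+f(x))$ with $1+f(x)=\big((x-n-\tfrac12)/(x-n+\tfrac12)\big)^{\pm2}$ on the relevant tail, so $f(x)=O(|x|^{-1})$ by Taylor expansion; the constant and the $\xi^{\pm1}$ are absorbed as in \eqref{f3.2}. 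Finally I would assemble the three mutually exclusive cases — both sites occupied or both empty (ratio $1$), box added, box removed — into one expression by multiplying the corresponding functionals by the cylinder indicators $1_X(n-\tfrac12),1_X(n+\tfrac12)$, the ``remove'' case being the same computation with reversed roles (producing $\xi^{-1}$ and reciprocal factors). By (i)--(ii) the result lies in $\mathcal C$. The cases $n\le-1$ and $n=0$ are handled identically, invoking the reflection symmetry of Subsection \ref{1-7} for the former.

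The main obstacle is the generator computation of the previous two paragraphs: converting the ratio of z-measure weights into a genuine multiplicative functional and, above all, verifying that the density $f$ decays like $|x|^{-1}$ and no slower. This borderline $O(|x|^{-1})$ decay is precisely what makes $\Phi_f$ available on the sparse configurations of Claim \ref{C1} (cf.\ Proposition \ref{2.3} with $r(x)=|x|$), and getting it right requires careful control of the telescoping of hook lengths and of the effect of the particle/hole involution on $\Z'_-$, rather than any soft argument.
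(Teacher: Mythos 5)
Your proposal is correct and follows essentially the same route as the paper: reduction to the generators $\si_n$ via the cocycle identity together with the stability of the class $\mathcal C$ under products, cylinder factors and composition with $\tsi^{-1}$, followed by an explicit evaluation of the one--box ratio whose tail factors $\bigl((x-n-\tfrac12)/(x-n+\tfrac12)\bigr)^{\pm2}=1+O(|x|^{-1})$ are exactly those the paper obtains. The only (minor) divergence is in how that ratio is produced: you compute $\dim\mu/\bigl((|\la|+1)\dim\la\bigr)$ from the hook--length formula read off the Maya diagram, whereas the paper substitutes the closed product formula \eqref{f3.6} for $P_{z,z',\xi}(X)$ in modified Frobenius coordinates (taken from \cite{Ol1}) and cancels common factors; the two computations yield the same expressions.
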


Let us emphasize that the right--hand side depends on $\xi$ through the factors
$\xi^k$ only. Proposition \ref{3.1} provides a refinement of Claim \ref{C2} of
the Introduction, as explained after the end of the proof of the proposition.

\begin{proof}
{\it Step\/} 1. Given a subset $X'\subseteq\Z'\cap[-N,N]$, where
$N\in\{1,2,\dots\}$, and a function $f^\out$ on $\Z'\setminus[-N,N]$, we set
\begin{equation*}
\begin{split}
\eta_{N,X'}(X)=\begin{cases} 1, & X\cap[-N,N]=X'\\ 0, &\text{\rm
otherwise}\end{cases}, \\
\Phi_{N,f^\out}(X)=\prod_{x\in
X\setminus[-N,N]}(1+f^\out(x)).
\end{split}
\end{equation*}

We will prove that for any $\si\in\mathfrak S$ and all $N$ large enough there
exists a representation of the form
\begin{equation}\label{f3.3}
\begin{gathered}
\mu_{z,z',\xi}(\si,X)=\sum_{i=1}^m a_i\xi^{k_i}\eta_{N,X'_i}(X)\Phi_{N,f^\out_i}(X), \\
a_i\in\R, \quad k_i\in\Z, \quad X'_i\subseteq\Z'\cap[-N,N], \quad
f^\out_i(x)=O(|x|^{-1}).
\end{gathered}
\end{equation}
Observe that \eqref{f3.3}  is  equivalent to \eqref{f3.2}, because each
function of the form $\eta_{N,X'}(X)$, being a cylinder functional depending on
$X\cap[-N,N]$ only, can be written as a linear combination of functionals of
the form
$$
X\mapsto\prod_{x\in X\cap[-N,N]}(1+f^\inn(x))
$$
with appropriate functions $f^\inn$ on $\Z'\cap[-N,N]$.

{\it Step\/} 2. Next, we want to reduce the problem to the particular case when
$\si$ is an elementary transposition. Since the elementary transpositions
generate the whole group $\mathfrak S$, to perform the desired reduction, it
suffices to prove that if the presentation \eqref{f3.3} exists for two elements
$\si,\tau\in\mathfrak S$ then it also exists for the product $\si\tau$.

It follows from the definition \eqref{f3.1} that
$$
\mu_{z,z',\xi}(\si\tau,X) =\mu_{z,z',\xi}(\si,X)\cdot
\mu_{z,z',\tau}(\tau,\wt\si^{-1}(X)).
$$
We may assume that $N$ is so large that the permutation $\si:\Z'\to\Z'$ does
not move points outside $[-N,N]$. Then it is clear that if the function
$X\mapsto \mu_{z,z',\xi}(\tau,X)$ admits a presentation of the form
\eqref{f3.3} then the same holds for the function $X\mapsto
\mu_{z,z',\xi}(\tau,\wt\si^{-1}(X))$ as well. Thus, it remains to check that
the set of functions admitting a representation of the form \eqref{f3.3} (with
$N$ fixed) is closed under multiplication. This is obvious, because the product
of two functionals of the form $\Phi_{N,f^\out}$ is a functional of the same
kind:
$$
\Phi_{N,f^\out}\Phi_{N,g^\out}=\Phi_{N,h^\out}
$$
with
$$
h^\out(x):=f^\out(x)g^\out(x)+f^\out(x)+g^\out(x),
$$
and, moreover, if $f^\out(x)=O(|x|^{-1})$ and $g^\out(x)=O(|x|^{-1})$ then
$h^\out(x)=O(|x|^{-1})$.

{\it Step\/} 3. Thus, we have to analyze the ratio
\begin{equation}\label{f3.4}
\mu_{z,z',\xi}(\si_n,X) =\frac{P_{z,z',\xi}(\tsi_n^{-1}(X))}{P_{z,z',\xi}(X)}
=\frac{P_{z,z',\xi}(\tsi_n(X))}{P_{z,z',\xi}(X)}\,,
\end{equation}
where the second equality holds because $\tsi_n^{-1}=\tsi_n$.

We aim to prove that for any $N>|n|$, there exists a single term representation
\begin{equation}\label{f3.5}
\begin{split}
\frac{P_{z,z',\xi}(\tsi_n(X))}{P_{z,z',\xi}(X)}=a\xi^k\Phi_{N,f^\out}(X),\\
k=0,\pm1, \quad f^\out(x)=O(|x|^{-1}),
\end{split}
\end{equation}
where, in contrast to \eqref{f3.3}, $a$, $k$, and $f^\out$ may depend on the
intersection $X':=X\cap[-N,N]$. Observe that once \eqref{f3.5} is established,
we can combine various variants of \eqref{f3.5} (which depend on $X'$) into a
single representation \eqref{f3.3} by making use of the factors
$\eta_{N,X'}(X)$. Thus, it suffices to prove \eqref{f3.5}.

{\it Step\/} 4. Here we exhibit a convenient explicit expression for
$P_{z,z',\xi}(X)$, where $X$ is an arbitrary balanced configuration. Employing
the notation introduced in Subsection \ref{1-1} we write
$$
X=\{-q_d,\dots,-q_1,p_1,\dots,p_d\}.
$$
By definition, $P_{z,z',\xi}(X)=M_{z,z',\xi}(\la)$, where $\la$ is such that
$X=X(\la)$. We have to rewrite the expression \eqref{f1.1} for
$M_{z,z',\xi}(\la)$ given in Subsection \ref{1-2} in terms of the $p_i$'s and
$q_i$'s. For the terms $(z)_\la$ and $(z')_\la$ this is easy, and for
$\dim\la/|\la|!$ we employ the formula
$$
\frac{\dim\la}{|\la|!}=\frac{\prod\limits_{1\le i<j\le
d}(p_j-p_i)(q_j-q_i)}{\prod\limits_{i=1}^d(p_i-\frac12)!(q_i-\frac12)!\cdot
\prod\limits_{i,j=1,\dots,d}(p_i+q_j)}
$$
given, e.g., in \cite[(2.7)]{Ol1}. The result is
\begin{equation}\label{f3.6}
\begin{split}
&P_{z,z',\xi}(X)=(1-\xi)^{zz'}\xi^{\sum_{i=1}^d(p_i+q_i)}(zz')^d \\
&\quad\times\prod_{i=1}^d \frac{(z+1)_{p_i-\frac12}(z'+1)_{p_i-\frac12}
(-z+1)_{q_i-\frac12}(-z'+1)_{q_i-\frac12}}{((p_i-\frac12)!)^2((q_i-\frac12)!)^2}\\
&\quad\times\frac{\prod\limits_{1\le i<j\le
d}(p_j-p_i)^2(q_j-q_i)^2}{\prod\limits_{i,j=1,\dots,d}(p_i+q_j)^2}.
\end{split}
\end{equation}
At first glance formula \eqref{f3.6} might appear cumbersome but actually it is
well suited for our purpose, because it already has multiplicative form and
after substitution into \eqref{f3.4} many factors are cancelled out.

Below we examine separately the three cases: $n=1,2,\dots$, $n=-1,-2,\dots$,
and $n=0$.

{\it Step\/} 5. Consider the case $n=1,2,\dots$. Then the transformed
configuration $\tsi_n(X)$ is the same as the configuration $\si_n(X)$, which in
turn either coincides with the initial configuration $X$ or differs from it by
shifting a single coordinate $p_i$ by $\pm1$. The shift $p_i\to p_i+1$ arises
if there exists $i$ such that $p_i=n-\frac12$ and either $i=d$ or $p_i+1\ne
p_{i+1}$, which means that $X$ contains the fragment
$\bullet\underset{n}{|}\circ$. The shift $p_i\to p_i-1$ arises if there exists
$i$ such that $p_i=n+\frac12$ and either $i=1$ or $p_i-1\ne p_{i-1}$, which
means that $X$ contains the fragment $\circ\underset{n}{|}\bullet$. In all
other cases $\si_n(X)=X$. Clearly, what of these possible variants takes place
is uniquely determined by the intersection $X':=X\cap[-N,N]$ (recall that, by
assumption, $N>|n|$).

If $\si_n(X)=X$ then the ratio \eqref{f3.4} simply equals 1.

If $\si_n$ transforms $p_i$ to $p_i\pm1$ then, as directly follows from
\eqref{f3.6}, the ratio \eqref{f3.4} equals
$$
\xi^{\pm1}\,
\left(\dfrac{(z+p_i\pm\tfrac12)(z'+p_i\pm\tfrac12)}{(p_i\pm\tfrac12)^2}\right)^{\pm1}
\dfrac{\prod\limits_{j:\, j\ne i}\left(\dfrac{p_j-p_i\mp1}{p_j-p_i}\right)^2}
{\prod\limits_j\left(\dfrac{q_j+p_i\pm1}{q_j+p_i}\right)^2}\,.
$$
This has the desired form \eqref{f3.5} with $k=\pm1$ and
$$
1+f^\out(x)=\begin{cases} \left(1\mp\dfrac1{x-p_i}\right)^2, & x>N\\
\left(1\pm\dfrac1{|x|+p_i}\right)^{-2}, & x<-N. \end{cases}
$$

{\it Step\/} 6. In the case $n=-1,-2,\dots$ one can repeat the argument of step
5. Alternatively, one can use the symmetry $p_i\leftrightarrow q_i$ (Subsection
\ref{1-7}).

{\it Step\/} 7. Finally, consider the case $n=0$. Then either $\tsi_0(X)=X$ or
$\tsi_0(X)$ differs from $X$ by adding or removing the couple of coordinates
$p_1=\frac12$, $q_1=\frac12$. Therefore,  ratio \eqref{f3.4} either equals 1 or
has the form
$$
(zz')^{\pm1}\xi^{\pm1}\left(\prod_j\dfrac{(p_j-\tfrac12)(q_j-\tfrac12)}
{(p_j+\tfrac12)(q_j+\tfrac12)}\right)^{\pm2}.
$$
This has the desired form \eqref{f3.5} with $k$ equal to $0$ or $\pm1$ and
$f^\out$ equal to 0 or
$$
1+f^\out(x)=\left(1-\frac1{2|x|}\right)^{\pm2}\left(1+\frac1{2|x|}\right)^{\mp2}.
$$
\end{proof}

In the discussion below we use the notions introduced in Subsection \ref{2-2}.

\begin{definition}\label{3.2}
Take the function $r(x)=|x|$ on $\Z'$. The corresponding
subset $\Conf_r(\Z')\subset\Conf(\Z')$ of $r$--sparse configurations will be
denoted as  $\Conf_\sparse(\Z')$. We equip $\Conf_\sparse(\Z')$ with the
$\ell^1$--topology.
\end{definition}

By virtue of Proposition \ref{2.3}, any function of the form \eqref{f3.2} is
well defined and continuous on $\Conf_\sparse(\Z')$. Thus, for any
$\si\in\mathfrak S$, the function $\mu_{z,z',\xi}(\si,X)$, initially defined on
finite balanced configurations $X$, admits a continuous extension to the larger
set $\Conf_\sparse(\Z')$. Although the presentation \eqref{f3.2} is not unique,
the result of the continuous extension provided by formula \eqref{f3.2} does
not depend on the specific presentation. Indeed, this follows from the fact
that the finite balanced configurations form a dense subset in
$\Conf_\sparse(\Z')$ (which is readily checked).

\begin{definition}\label{3.3}
For any $\si\in\mathfrak S$, let $\mu_{z,z'}(\si,X)$
stand for the function on $\Conf_\sparse(\Z')$ obtained by specializing $\xi=1$
in the right--hand side of formula \eqref{f3.2}.
\end{definition}

Obviously, $\mu_{z,z'}(\si,X)$ coincides with the pointwise limit, as
$\xi\to1$, of the continuous extensions of the functions
$\mu_{z,z',\xi}(\si,X)$. This shows that $\mu_{z,z'}(\si,X)$ does not depend on
a specific presentation \eqref{f3.2}. Moreover, Proposition \ref{2.3} ensures
that the limit function is continuous in the $\ell^1$--topology.

Thus, we have shown that Proposition \ref{3.1} implies Claim \ref{C2}
(Subsection \ref{0-4}) in a refined form.

\section{Main result: Formulation and beginning of proof}\label{4}

The following theorem is the main result of the paper.

\begin{theorem}\label{4.1} {\rm(i)} The measures $P_{z,z'}$ are quasiinvariant with
respect to the modified action of the group $\mathfrak S$ on probability
measures on the space $\Conf(\Z')$, as defined in Section \ref{3}.

{\rm(ii)} For any permutation $\si\in\mathfrak S$, the Radon--Nikod\'ym
derivative $\tsi(P_{z,z'})/P_{z,z'}$ coincides with the limit expression
$\mu_{z,z'}(\si,X)$ introduced in Definition \ref{3.3}, within a
$P_{z,z'}$--null set.
\end{theorem}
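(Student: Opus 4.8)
The plan is to transfer the obvious pre-limit identity to the limit measures via the limit transition $\xi\to1$. For a finite balanced configuration $X$ the function $\mu_{z,z',\xi}(\si,\,\cdot\,)$ is, by its very definition \eqref{f3.1}, the density of $\tsi(P_{z,z',\xi})$ with respect to $P_{z,z',\xi}$. Hence for every bounded Borel function $F$ on $\Conf(\Z')$ one has
\begin{align*}
\langle F\circ\tsi,\,P_{z,z',\xi}\rangle
&=\langle F,\,\tsi(P_{z,z',\xi})\rangle \\
&=\langle F\cdot\mu_{z,z',\xi}(\si,\,\cdot\,),\,P_{z,z',\xi}\rangle,
\end{align*}
where the first equality is the change-of-variables formula for push-forwards. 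I would first prove the theorem with $F$ ranging over cylinder functions, and then upgrade the resulting pairing identity to an identity of measures.

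The left-hand side is easy: $\tsi=\inv\circ\si\circ\inv$ is a homeomorphism of the compact space $\Conf(\Z')$, so $F\circ\tsi$ is again bounded and continuous, and by the weak convergence of Theorem \ref{1.1} the left-hand side tends to $\langle F\circ\tsi,P_{z,z'}\rangle=\langle F,\tsi(P_{z,z'})\rangle$. The delicate point is the right-hand side, and this is where the machinery of Section \ref{3} and Claim \ref{C3} is needed. By Proposition \ref{3.1}, $\mu_{z,z',\xi}(\si,X)=\sum_i a_i\xi^{k_i}\Phi_{f_i}(X)$ with $f_i(x)=O(|x|^{-1})$, the dependence on $\xi$ entering only through the scalar factors $\xi^{k_i}$. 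A cylinder function $F$ depending on $X\cap[-M,M]$ is a finite linear combination of multiplicative functionals $\Phi_g$ with finitely supported $g$, and $\Phi_g\Phi_{f_i}=\Phi_{h}$ with $h=g+f_i+gf_i=O(|x|^{-1})$. Thus $F\cdot\mu_{z,z',\xi}(\si,\,\cdot\,)$ is a finite combination $\sum_l b_l\xi^{m_l}\Phi_{h_l}$ with $h_l(x)=O(|x|^{-1})$, and Claim \ref{C3} applies to each term: $\langle\Phi_{h_l},P_{z,z',\xi}\rangle\to\langle\Phi_{h_l},P_{z,z'}\rangle$ while $\xi^{m_l}\to1$. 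Hence the right-hand side converges to $\langle F\cdot\mu_{z,z'}(\si,\,\cdot\,),P_{z,z'}\rangle$, where $\mu_{z,z'}(\si,\,\cdot\,)$ is the limit functional of Definition \ref{3.3}; it is $P_{z,z'}$-integrable precisely because it is a finite combination of multiplicative functionals covered by Claim \ref{C3}.

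Comparing the two limits yields $\langle F,\tsi(P_{z,z'})\rangle=\langle F\cdot\mu_{z,z'}(\si,\,\cdot\,),P_{z,z'}\rangle$ for all cylinder $F$. Since the cylinder sets form a $\pi$-system generating the Borel $\sigma$-algebra of $\Conf(\Z')$, two finite signed measures agreeing on all cylinder functions coincide, so $\tsi(P_{z,z'})=\mu_{z,z'}(\si,\,\cdot\,)\,P_{z,z'}$; this is assertion (ii). Assertion (i) follows formally: the identity, applied to every $\si\in\mathfrak S$, gives $\tsi(P_{z,z'})\ll P_{z,z'}$ for all $\si$, in particular $\widetilde{\si^{-1}}(P_{z,z'})\ll P_{z,z'}$. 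Applying the measurable isomorphism $\tsi$ to both sides and using $\tsi\circ\widetilde{\si^{-1}}=\widetilde{\si\si^{-1}}=\mathrm{id}$ (because $\si\mapsto\tsi$ is a genuine group action, $\inv$ being involutive) yields $P_{z,z'}\ll\tsi(P_{z,z'})$, so the two measures are equivalent.

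The substance of the argument is therefore not this reduction but the input Claim \ref{C3}: it asserts convergence of integrals of multiplicative functionals, which are in general \emph{unbounded} on $\Conf_\sparse(\Z')$ and hence lie strictly outside the reach of the weak convergence of Theorem \ref{1.1}. I would reduce Claim \ref{C3} to the operator statement Claim \ref{C4} by writing $\langle\Phi_f,P\rangle$ as a Fredholm determinant through Proposition \ref{2.7}, taking $h(x)=|x|^{-1/2}$ so that $A_hKA_h=AKA$ with $A$ as in Claim \ref{C4}; the continuity of the Fredholm determinant on $\LL(H)$ then turns the required convergence of pairings into the convergence $AK_{z,z',\xi}A\to AK_{z,z'}A$ in the combined topology. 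The genuinely hard part — verifying that $AK_{z,z'}A$ actually lands in $\LL(H)$ (which, as noted, is not a formal consequence of boundedness of $K_{z,z'}$ since $A^2\notin\LL(H)$) and controlling the diagonal blocks in trace-class and the off-diagonal blocks in Hilbert--Schmidt norm — is what the contour-integral estimates of the later sections are devoted to.
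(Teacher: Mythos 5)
Your argument is correct and follows essentially the same route as the paper: the pre-limit identity $\langle F^\si,P_{z,z',\xi}\rangle=\langle\mu_{z,z',\xi}(\si,\cdot)F,P_{z,z',\xi}\rangle$ for cylinder $F$, weak convergence for the left side, the decomposition of $F\cdot\mu_{z,z',\xi}(\si,\cdot)$ into multiplicative functionals $\Phi_h$ with $h(x)=O(|x|^{-1})$ plus Theorem \ref{4.2} (Claim \ref{C3}) for the right side, and the further reduction to Theorem \ref{4.3} via Proposition \ref{2.7} and continuity of the Fredholm determinant on $\LL(H)$. The only cosmetic differences are that you spell out the deduction of (i) from (ii) via the group property of $\si\mapsto\tsi$ and identify the measures through a $\pi$-system argument rather than density of cylinder functions in $C(\Conf(\Z'))$; both are sound.
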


Recall that each of the measures $P_{z,z'}$ is concentrated on the Borel subset
$\Conf_\sparse(\Z')$ (see Example \ref{2.4}) and each of the functions
$\mu_{z,z'}(\si,X)$ is well defined on the same subset and is a Borel function.

In this section, we will reduce Theorem \ref{4.1} to Theorem \ref{4.3} through
an intermediate claim, Theorem \ref{4.2}, which is of independent interest. The
proof of Theorem \ref{4.3} occupies Sections \ref{5} and \ref{6}.

As before, we use the angular brackets to denote the pairing between functions
and measures.

\begin{theorem}\label{4.2} Let $f(x)$ be an arbitrary function on $\Z'$ such that
$f(x)=O(|x|^{-1})$ as $x\to\pm\infty$. Then the multiplicative functional
$\Phi_f$ is absolutely integrable with respect to the measures $P_{z,z',\xi}$
and $P_{z,z'}$ and
$$
\lim_{\xi\to1}\langle\Phi_f,\, P_{z,z',\xi}\rangle=\langle\Phi_f,\,
P_{z,z'}\rangle.
$$
\end{theorem}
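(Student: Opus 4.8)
The plan is to express both expectations $\langle\Phi_f, P_{z,z',\xi}\rangle$ and $\langle\Phi_f, P_{z,z'}\rangle$ as Fredholm determinants via the machinery of Subsection \ref{2-4}, and then deduce the limit relation from a determinant continuity argument. Concretely, I write $f(x)=g(x)h^2(x)$ with $h(x)=|x|^{-1/2}$ and $g(x)=|x|f(x)$; by hypothesis $g$ is bounded. With $A=A_h$ the operator of multiplication by $|x|^{-1/2}$ (the operator denoted $A$ in Claim \ref{C4}), Proposition \ref{2.7} gives
\begin{equation*}
\langle\Phi_f, P_{z,z',\xi}\rangle = \det(1+A_g\,AK_{z,z',\xi}A), \qquad
\langle\Phi_f, P_{z,z'}\rangle = \det(1+A_g\,AK_{z,z'}A),
\end{equation*}
\emph{provided} one first checks the hypotheses of Proposition \ref{2.7}, namely that $AK_{z,z',\xi}A\in\LL(H)$ and $AK_{z,z'}A\in\LL(H)$. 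For the pre--limit operators this follows from Proposition \ref{1.8} (trace class implies membership in $\LL(H)$, and conjugation by the bounded operator $A$ preserves the relevant ideals). For the limit operator the membership $AK_{z,z'}A\in\LL(H)$ is precisely the first assertion of Claim \ref{C4}, which I am entitled to assume.

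Once both sides are written as Fredholm determinants, the convergence reduces to the continuity of the determinant functional on $\LL(H)$. Recall from Subsection \ref{2-4} that $A\mapsto\det(1+A)$ is continuous on $\LL(H)$ in the combined topology (trace norm on diagonal blocks, Hilbert--Schmidt norm on off--diagonal blocks). Multiplication by the bounded diagonal operator $A_g$ is continuous in this topology: it preserves the block structure and does not worsen trace-class or Hilbert--Schmidt membership. Therefore, if $AK_{z,z',\xi}A \to AK_{z,z'}A$ in the combined topology of $\LL(H)$ as $\xi\to1$ --- which is exactly the second assertion of Claim \ref{C4} --- then $A_g\,AK_{z,z',\xi}A \to A_g\,AK_{z,z'}A$ in the same topology, and continuity of the determinant yields
\begin{equation*}
\lim_{\xi\to1}\det(1+A_g\,AK_{z,z',\xi}A) = \det(1+A_g\,AK_{z,z'}A),
\end{equation*}
which is the desired limit relation.

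The absolute integrability of $\Phi_f$ with respect to each measure is handled as a byproduct: it is part of the conclusion of Proposition \ref{2.7} once its hypotheses are verified, since that proposition is stated for functions of the form $f=gh^2$ with $A_hKA_h\in\LL(H)$. For the pre--limit measures one could alternatively invoke the fact that $P_{z,z',\xi}$ lives on finite configurations, so $\Phi_f$ is even a bona fide (eventually constant) product; but routing everything through Proposition \ref{2.7} keeps the argument uniform. I would also note that $P_{z,z'}$ is concentrated on $\Conf_\sparse(\Z')$ (Example \ref{2.4}), so $\Phi_f$ is defined $P_{z,z'}$--almost everywhere by Proposition \ref{2.3}, reconciling the determinant computation with the pointwise definition of the functional.

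The main obstacle is not located in this theorem at all: the entire analytic weight of the statement has been deliberately shifted into Claim \ref{C4} (equivalently Theorem \ref{4.3}), whose proof occupies Sections \ref{5} and \ref{6}. Given Claim \ref{C4}, the present argument is essentially a formal verification that the hypotheses of Proposition \ref{2.7} hold and an appeal to the continuity of the Fredholm determinant. The one genuinely nontrivial structural point --- and the reason the decomposition $f=gh^2$ with $h=|x|^{-1/2}$ is forced --- is that $A^2=A_{|x|^{-1}}$ is \emph{not} in $\LL(H)$ (the series $\sum|x|^{-1}$ diverges), so one cannot simply absorb $f$ into a single trace-class conjugation; splitting the weight symmetrically as $A\cdot A$ across the kernel is what brings the operator into $\LL(H)$, and verifying $AK_{z,z'}A\in\LL(H)$ is exactly where the work lies.
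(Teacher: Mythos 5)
Your proposal is correct and follows essentially the same route as the paper's own derivation of Theorem \ref{4.2} from Theorem \ref{4.3}: the decomposition $f=gh^2$ with $h(x)=|x|^{-1/2}$, the appeal to Proposition \ref{2.7} (with Proposition \ref{1.8} handling the pre--limit case), and the continuity of $\det(1+\cdot)$ on $\LL(H)$ are exactly the steps in the paper. Your closing observation that the analytic weight has been shifted entirely into Claim \ref{C4} is also how the paper organizes the argument.
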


\begin{proof}[Derivation of Theorem \ref{4.1} from Theorem \ref{4.2}]
Recall that the function $\mu_{z,z'}(\si,X)$ is a finite linear combination of
the multiplicative functionals $\Phi_f$ with $f(x)=O(|x|^{-1})$, see Definition
\ref{3.3}. Theorem \ref{4.2} says that such functionals are absolutely
integrable with respect to $P_{z,z'}$, which implies that so is
$\mu_{z,z'}(\si,X)$. Thus, $\mu_{z,z'}(\si,\,\cdot\,)P_{z,z'}$ is a finite
Borel measure.

The claim of Theorem \ref{4.1} is equivalent to the following one: For any
$\si\in\mathfrak S$,
$$
\tsi(P_{z,z'})=\mu_{z,z'}(\si,\,\cdot\,)P_{z,z'}.
$$
Recall that the cylinder functions on $\Conf(\Z')$ are dense in
$C(\Conf(\Z'))$, see Subsection \ref{2-1}. Therefore, it suffices to prove that
for any cylinder function $F$
$$
\langle F, \,\tsi(P_{z,z'})\rangle=\langle \mu_{z,z'}(\si,\,\cdot\,)F,\,
P_{z,z'}\rangle.
$$

Set
$$
F^{\si}(X)=F(\tsi(X))
$$
and observe that $F^\si$ is a cylinder function, too. We may rewrite the
desired equality in the form
\begin{equation}\label{f4.1}
\langle F^\si,\, P_{z,z'}\rangle=\langle \mu_{z,z'}(\si,\,\cdot\,)F,\,
P_{z,z'}\rangle.
\end{equation}

By the very definition of $\mu_{z,z',\xi}$, we have
$$
\tsi(P_{z,z',\xi})=\mu_{z,z',\xi}(\si,\,\cdot\,)P_{z,z',\xi},
$$
so that
\begin{equation}\label{f4.2}
\langle F^\si,\, P_{z,z',\xi}\rangle=\langle \mu_{z,z',\xi}(\si,\,\cdot\,)F,\,
P_{z,z',\xi}\rangle.
\end{equation}
A natural idea is to derive \eqref{f4.1} from \eqref{f4.2} by passing to the
limit as $\xi$ goes to 1.

We know that the measures $P_{z,z',\xi}$ weakly converge to the measure
$P_{z,z'}$ (Theorem \ref{1.1} above). Therefore, the left--hand side of
\eqref{f4.2} converges to the left--hand side of \eqref{f4.1}.

Consequently, to establish \eqref{f4.1} it remains to prove that the similar
limit relation holds for the right--hand sides, namely
\begin{equation}\label{f4.3}
\lim_{\xi\to1}\langle \mu_{z,z',\xi}(\si,\,\cdot\,)F,\, P_{z,z',\xi}\rangle =
\langle \mu_{z,z'}(\si,\,\cdot\,)F,\, P_{z,z'}\rangle.
\end{equation}

According to the definition of the function $\mu_{z,z'}(\si,X)$ (see Theorem
\ref{3.1} and Definition \ref{3.3}), the limit relation \eqref{f4.3} can be
reduced to the following one:
\begin{equation}\label{f4.4}
\lim_{\xi\to1}\langle \xi^k\Phi_f F,\, P_{z,z',\xi}\rangle =\langle \Phi_f F,\,
P_{z,z'}\rangle,
\end{equation}
where $k\in\Z$, $F$ is a cylinder function, and $f(x)=O(|x|^{-1})$.

Obviously, the factor $\xi^k$, which tends to $1$, is inessential and can be
neglected, so that \eqref{f4.4} can be simplified:
\begin{equation}\label{f4.5}
\lim_{\xi\to1}\langle \Phi_f F,\, P_{z,z',\xi}\rangle =\langle \Phi_f F,\,
P_{z,z'}\rangle.
\end{equation}

Next, fix a finite subset $Y\subset\X$, so large that $F(X)$ depends on the
intersection $X\cap Y$ only. It is readily verified that $F$ can be written as
a finite linear combination of multiplicative functionals $\Phi_{g_i}$, where
each $g_i$ vanishes outside $Y$ (this claim actually concerns functions on the
finite set $\{0,1\}^Y$). Observe that
$$
\Phi_f\Phi_{g_i}=\Phi_{f_i}\,, \qquad f_i:=f+g_i+fg_i
$$
(we have already used such an equality in the proof of Proposition \ref{3.1},
step 2). It follows that the product $\Phi_f F$ can be written as a finite
linear combination of multiplicative functionals $\Phi_{f_i}$, where each $f_i$
coincides with $f$ outside $Y$ and hence obeys the same decay condition,
$f_i(x)=O(|x|^{-1})$. Thus, we have reduced \eqref{f4.5} to the claim of
Theorem \ref{4.2}.
\end{proof}

The essence of difficulty in proving Theorem \ref{4.2} is that, for generic $f$
decaying as $|x|^{-1}$, the multiplicative functional $\Phi_f$  is unbounded
and so cannot be extended to a continuous function on the whole space
$\Conf(\Z')$. Thus, for our purpose, the fact of the weak convergence
$P_{z,z',\xi}\to P_{z,z'}$, that is, convergence on continuous test functions,
is insufficient: we have to enlarge the set of admissible test functions to
include the functions like $\Phi_f$. \footnote{The situation is formally
similar to that of weak convergence and moment convergence of probability
measures on $\R$: In general, the former does not imply the latter.}

The idea is to relate the required stronger convergence of the measures to an
appropriate convergence of their correlation operators.

Set $h(x)=|x|^{-1/2}$, where $x\in\Z'$, and recall that $A_h$ denotes the
operator of multiplication by $h$ in the Hilbert space $H=\ell^2(\Z')$. Below
we use the notions introduced in Subsection \ref{2-4}.

\begin{theorem}\label{4.3} {\rm(i)} The operator $A_hK_{z,z'}A_h$ lies in
$\LL(H)$.

{\rm(ii)} As $\xi$ goes to $1$, the operators $A_hK_{z,z',\xi}A_h$ converge to
the operator $A_hK_{z,z'}A_h$ in the topology of the space $\LL(H)$.
\end{theorem}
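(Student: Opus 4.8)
The plan is to treat the four blocks of $K_{z,z'}$ separately, using the relations between the blocks of $K$ and those of the orthogonal projection $\unK$ recorded in Subsection \ref{1-5}: on the diagonal $K_{++}=\unK_{++}$ and $K_{--}=1-A_\epsi\unK_{--}A_\epsi$, while off the diagonal $K_{+-}=\unK_{+-}A_\epsi$ and $K_{-+}=-A_\epsi\unK_{-+}$. Since $A_\epsi$ is unitary and commutes with $A_h$, conjugation by it leaves both the trace class norm and the Hilbert--Schmidt norm unchanged; hence one may work throughout with the blocks of $\unK_{z,z'}$ and of the pre--limit projections $\unK_{z,z',\xi}$ directly. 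The two diagonal blocks will be controlled in the trace norm and the two off--diagonal blocks in the Hilbert--Schmidt norm.

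For the diagonal blocks I would first dispose of (i). Because $\unK_{z,z'}$ is an orthogonal projection, its compression $\unK_{++}$ is a nonnegative contraction, and so is $A_\epsi\unK_{--}A_\epsi$; hence $A_hK_{++}A_h\ge0$ and $A_hK_{--}A_h\ge0$. For a nonnegative operator, membership in the trace class is equivalent to finiteness of the sum of its diagonal matrix entries, and that sum equals
\[
\sum_{x\in\Z'_\pm}|x|^{-1}K_{z,z'}(x,x)=\sum_{x\in\Z'_\pm}|x|^{-1}\rho_1^{(z,z')}(x),
\]
which converges by the asymptotics $\rho_1^{(z,z')}(x)\sim C(z,z')/|x|$ of Corollary \ref{1.6}. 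This settles the diagonal part of (i); the pre--limit operators $A_hK_{z,z',\xi}A_h$ lie in $\LL(H)$ automatically, since $K_{z,z',\xi}$ is trace class by Proposition \ref{1.8}. For the convergence in (ii) I would invoke the standard fact that, for nonnegative trace class operators, weak operator convergence together with convergence of traces implies convergence in the trace norm. Weak convergence is provided by Theorem \ref{1.3} (pointwise convergence of the kernels), and convergence of the traces reduces to $\sum_{x}|x|^{-1}K_{z,z',\xi}(x,x)\to\sum_x|x|^{-1}K_{z,z'}(x,x)$; since the summands converge pointwise, this is a dominated--convergence statement, and what is needed is a uniform bound $K_{z,z',\xi}(x,x)\le C|x|^{-1}$ valid for all $\xi$ close to $1$.

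For the off--diagonal blocks, membership in the Hilbert--Schmidt class and convergence there can be handled together by one dominated--convergence argument applied to
\[
\|A_h\unK_{+-}A_h\|_2^2=\sum_{x\in\Z'_+,\,y\in\Z'_-}|x|^{-1}|y|^{-1}\,|\unK_{z,z'}(x,y)|^2
\]
(and symmetrically for the $-+$ block). Using the integrable form of Theorem \ref{1.4} (or Theorem \ref{1.5}), I would estimate the numerator $\mathcal P(x)\mathcal Q(y)-\mathcal Q(x)\mathcal P(y)$ through the Gamma--ratio asymptotics $|\mathcal P(x)|\asymp|x|^{\operatorname{Re}(z-z')/2}$ and $|\mathcal Q(x)|\asymp|x|^{-\operatorname{Re}(z-z')/2}$ as $|x|\to\infty$, while the denominator contributes $|x-y|=|x|+|y|$. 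Because $\operatorname{Re}(z-z')$ is bounded in modulus by $1$ on both the principal and the complementary series, the resulting double sum converges, giving the off--diagonal part of (i); for (ii) one combines pointwise convergence of the kernels (Theorem \ref{1.3}) with a uniform--in--$\xi$ bound on $|\unK_{z,z',\xi}(x,y)|$ of the same shape and concludes by dominated convergence for the Hilbert--Schmidt sum.

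Thus the entire proof, apart from the kernel estimates, is soft: positivity, the trace--norm criterion for positive operators, and dominated convergence. The hard part — and the reason Sections \ref{5} and \ref{6} are needed — is precisely the family of uniform--in--$\xi$ pointwise bounds on $K_{z,z',\xi}(x,x)$ and on $\unK_{z,z',\xi}(x,y)$ for $x>0>y$, with the correct $|x|^{-1}$-type decay, in the delicate regime where $\xi\to1$ and $|x|,|y|\to\infty$ simultaneously. These bounds cannot be read off from the integrable form of the pre--limit kernel; I would derive them from the double contour integral representation of $\unK_{z,z',\xi}$, deforming the contours and performing a steepest--descent analysis arranged so that the estimates remain uniform as $\xi\to1$.
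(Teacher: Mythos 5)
Your soft framework is essentially the paper's: reduce to the blocks of $\unK$, use positivity of the diagonal blocks so that trace--class membership and trace--norm convergence reduce to finiteness and convergence of traces, and handle the off--diagonal blocks in the Hilbert--Schmidt norm. Two of your shortcuts are genuine simplifications of part (i): the finiteness of $\sum_x |x|^{-1}\rho_1^{(z,z')}(x)$ does follow at once from Corollary \ref{1.6} (the paper reproves this via contour integrals in Proposition \ref{5.2}, largely as a warm-up for Proposition \ref{5.4}), and the Hilbert--Schmidt bound for the limit $+-$ block can indeed be extracted from the integrable form of Theorems \ref{1.4}--\ref{1.5} together with $\Ga$--ratio asymptotics and the inequality $(|x|+|y|)^2\ge |x|^{1+\mu}|y|^{1-\mu}$, whereas the paper goes through a four--fold contour integral in Proposition \ref{6.2}.

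The gap is part (ii), which is the actual content of Theorem \ref{4.3} and of Sections \ref{5}--\ref{6}. Your argument rests entirely on uniform--in--$\xi$ pointwise bounds $K_{z,z',\xi}(x,x)\le C|x|^{-1}$ and $|\unK_{z,z',\xi}(x,y)|\le G(x,y)$ of the limiting shape, and these are nowhere established --- they are exactly the ``key technical result'' the paper is organized around, and they cannot be read off from anything quoted so far (Proposition \ref{1.8} gives trace--class for each fixed $\xi$ but with no uniformity, and Theorem \ref{1.3} gives only pointwise convergence of kernels). Note moreover that the paper never proves such pointwise uniform bounds: it proves the weaker, integrated statements directly, by interchanging the sum over $m$ (and $n$) with the contour integration, controlling the resulting series $\sum_m (m+\frac12)^{-1}\bigl|\tfrac{1-\sqrt\xi\om}{\om-\sqrt\xi}\bigr|^{2m}$ via the uniform contour estimate of Lemma \ref{5.3}, and then showing that after the substitution $\om=1/\sqrt\xi+(1-\xi)u$ the inner parts of the contours reproduce the limit integrals while the prefactors $1-\xi$, $(1-\xi)^2$ kill the outer parts; for the $+-$ block this additionally requires replacing \eqref{f5.1} by the modified representation \eqref{f6.1} (via $\om_2\to\om_2^{-1}$), since otherwise the summation under the integral diverges. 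So your pointwise dominated--convergence route asserts a strictly stronger uniform estimate than the one the paper verifies, and ``steepest descent'' is not quite the right mechanism here (there is no large phase; the analysis is a local rescaling at the critical point $\om=1$ plus the smallness of the prefactor elsewhere). Until those uniform estimates are supplied, the proof of (ii) --- for both the traces and the Hilbert--Schmidt norms --- is missing.
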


\medskip
\noindent{\it Comments\/.} Note that the pre--limit operators
$A_hK_{z,z',\xi}A_h$ also lie in $\LL(H)$, because the operators $K_{z,z',\xi}$
are of trace class, see Proposition \ref{1.8}. According to the definition of
$\LL(H)$ (see Subsection \ref{2-4}), the claim of the theorem means that the
diagonal blocks converge in the trace class norm while the off--diagonal blocks
converge in the Hilbert--Schmidt norm. I do not know whether, in the case of
the off--diagonal blocks, the Hilbert--Schmidt norm can be replaced by the
trace class norm.
\medskip

\begin{proof}[Derivation of Theorem \ref{4.2} from Theorem \ref{4.3}]
The assumption on the function $f$ allows one to write it as $f=gh^2$, where
$|g(x)|$ is bounded (recall that $h(x)=|x|^{-1/2}$). By virtue of Proposition
\ref{2.7} and claim (i) of Theorem \ref{4.3}, $\Phi_f$ is absolutely integrable
with respect to the limit measure $P_{z,z'}$, and the same also holds for the
pre--limit measures $P_{z,z',\xi}$ (see the comments above). Moreover,
$$
\langle \Phi_f,\, P_{z,z',\xi}\rangle=\det(1+A_gA_hK_{z,z',\xi}A_h), \qquad
\langle \Phi_f,\, P_{z,z'}\rangle=\det(1+A_gA_hK_{z,z'}A_h).
$$

Finally, claim (ii) of Theorem \ref{4.2} implies that the operators
$A_gA_hK_{z,z',\xi}A_h$ converge to the operator $A_gA_hK_{z,z'}A_h$ in the
topology of $\LL(H)$. Therefore, the corresponding determinants converge, too.
Here we use the fact that the function $A\mapsto\det(1+A)$ is continuous on
$\LL(H)$, see Subsection \ref{2-4}.
\end{proof}

Thus, we have reduced Theorem \ref{4.2}, and hence Theorem \ref{4.1}, to
Theorem \ref{4.3}. The latter theorem is proved separately for the diagonal and
off--diagonal blocks in Sections \ref{5} and \ref{6}, respectively.

\section{Convergence of diagonal blocks in the topology of the trace class
norm}
\label{5}

Here we prove the claims of Theorem \ref{4.3} for the diagonal blocks
$(\cdot)_{++}$ and $(\cdot)_{--}$ of the operators in question. Due to the
symmetry relation of Proposition \ref{1.9}, the latter block is obtained from
the former one by a simple change of the basic parameters, $(z,z')\to(-z,-z')$.
Thus, it suffices to focus on the limit behavior of the block $(\cdot)_{++}$.
That is, we have to prove that the operator $(A_hK_{z,z'}A_h)_{++}$ in the
Hilbert space $\ell^2(\Z'_+)$ is of trace class and
$$
\lim_{\xi\to1}\Vert (A_hK_{z,z',\xi}A_h)_{++}-(A_hK_{z,z'}A_h)_{++}\Vert_1=0,
$$
where $\Vert\cdot\Vert_1$ is the trace class norm.

\medskip
Since the proof is long, let us describe its scheme. First of all, observe
that, as long as we are dealing with the $++$ block, there is no difference
between $K_{z,z',\xi}$ and $\unK_{z,z',\xi}$ (and the same for the limit
operators). Indeed, this follows from the relation between the both kind of
operators, see Subsection \ref{1-5}.

In Proposition \ref{5.1} we rederive the weak convergence $\unK_{z,z',\xi}\to
\unK_{z,z'}$, which implies the weak convergence
$(A_hK_{z,z',\xi}A_h)_{++}\to(A_hK_{z,z'}A_h)_{++}$.

Recall that $\unK_{z,z',\xi}$ is a projection operator (see Subsection
\ref{1-4}). This implies that its $++$ block is a nonnegative operator, so that
$(A_hK_{z,z',\xi}A_h)_{++}$ is also a nonnegative operator. Therefore, the
limit operator $(A_hK_{z,z'}A_h)_{++}$ is nonnegative, too.

Consequently, to prove that the operator $(A_hK_{z,z'}A_h)_{++}$ is of trace
class it suffices to prove that its trace is finite. This is done in
Proposition \ref{5.2}.

The similar fact for the pre--limit operator $(A_hK_{z,z',\xi}A_h)_{++}$ is a
trivial consequence of Proposition \ref{1.8}.

In Proposition \ref{5.4} we establish the convergence of traces,
$$
\lim_{\xi\to1}\tr\left((A_hK_{z,z',\xi}A_h)_{++}\right)
=\tr\left((A_hK_{z,z'}A_h)_{++}\right).
$$
This concludes the proof, because, for nonnegative operators, weak convergence
together with convergence of traces is equivalent to convergence in the trace
class norm (see, e.g., \cite[Proposition A.9]{BOO}).
\medskip

Let us proceed to the detailed proof.

Our starting point is the double contour integral representation \eqref{f5.1}
(see below) for the kernel $\unK_{z,z',\xi}(x,y)$ on the lattice $\Z'$. Formula
\eqref{f5.1} is a particular case of a more general formula obtained in
\cite[\S9, p. 148]{BO3}. \footnote{Another integral representation, given in
\cite[Thm.3.3]{BO4} and \cite[Thm. 6.3]{BO3}, seems to be less suitable for our
purpose.}
\begin{multline}\label{f5.1}
\unK_{z,z',\xi}(x;y)\\
=\frac{\Gamma(-z'-x+\frac12)\Gamma(-z-y+\frac12)}
{\bigl(\Gamma(-z-x+\frac12)\Gamma(-z'-x+\frac12)
\Gamma(-z-y+\frac12)\Gamma(-z'-y+\frac12)\bigr)^{\frac12}} \\
\times\frac{1-\xi}{(2\pi i)^2}\oint\limits_{\{\om_1\}}\oint\limits_{\{\om_2\}}
\left(1-\sqrt{\xi}\om_1\right)^{z'+x-\tfrac12}
\left(1-\frac{\sqrt{\xi}}{\om_1}\right)^{-z-x-\tfrac12}\\
\times\left(1-\sqrt{\xi}\om_2\right)^{z+y-\tfrac12}
\left(1-\frac{\sqrt{\xi}}{\om_2}\right)^{-z'-y-\tfrac12} \frac{\om_1^{-x-\frac
12}\om_2^{-y-\frac 12}}{\omega_1\omega_2-1} \,{d\om_1}{d\om_2}.
\end{multline}

Let us explain the notation. Here $\{\om_1\}$ and $\{\om_2\}$ are arbitrary
simple, positively oriented loops in $\C$ with the following properties:

$\bullet$ Each of the contours surrounds the finite interval $[0,\sqrt\xi]$ and
leaves outside the semi--infinite interval $[1/\sqrt\xi,+\infty)\subset\R$.

$\bullet$ On the direct product of the contours, $\om_1\om_2\ne1$, so that the
denominator $\om_1\om_2-1$ in \eqref{f5.1} does not vanish.

The simplest contours satisfying these conditions are the circles centered at
0, with radii slightly greater than 1. However, to pass to the limit as
$\xi\to1$, we will deform these contours to a more sophisticated form, as
explained below.

To make the integrand meaningful, we have to specify the branches of the power
functions entering \eqref{f5.1}, and this is done in the following way. For the
terms $(1-\sqrt{\xi}\om)^\al$ (where $\om$ stands for $\om_1$ or $\om_2$ and
$\al$ equals $z'+x-\frac12$ or $z+y-\frac12$), we use the fact that $\{\om\}$
is contained in the simply connected region $\C\setminus[1/\sqrt\xi,+\infty)$
and specify the branch by setting $\arg(1-\sqrt{\xi}\om)=0$ for real negative
values of $\om$. Likewise, the terms $\om\mapsto (1-\sqrt{\xi}/\om)^\al$ are
well defined in the simply connected region
$(\C\cup\{\infty\})\setminus[0,\sqrt\xi]$, with the convention that
$\arg(1-\sqrt{\xi}/\om)=0$ for real $\om$ greater than $\sqrt\xi$.

Finally, note that the $\Ga$--factors in the numerator are not singular because
their arguments are not integers. Indeed, parameters $z$ and $z'$ are forbidden
to take integral values while $x-\frac12$ and $y-\frac12$ are integers. As for
the $\Ga$--factors in the denominator, their product is strictly positive,
again by virtue of the basic conditions on the parameters. Thus, we may and do
assume that the square root extracted from this product is positive, too.

To perform the limit transition as $\xi\to1$ we will need a special contour
$C(R,r,\xi)$ in the complex $\om$--plane. This contour depends on the
parameters $R>0$, $r>0$, and $\xi$, and looks as follows (see Fig.~1):

\medskip

\begin{center}
\scalebox{1.0}{\includegraphics{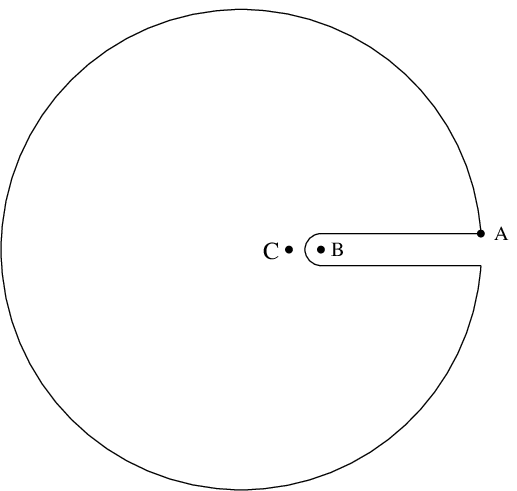}}

Figure 1. The contour $C(R,r,\xi)$: $A=Re^{i\theta}$, $B=1/\sqrt\xi$,
$C=\sqrt\xi$.
\end{center}

\medskip

\noindent Here we assume that the parameter $r>0$ is small enough while the
parameter $R>0$ is big enough. The integration path starts at the point $A=R
e^{i\theta}$ with small $\theta>0$ such that $\Im\om=R\sin\theta=r$, first goes
along the circle $|\omega|=R$ in the positive direction till the point
$Re^{-i\theta}$, then goes in parallel to the real line until the point
$1/\sqrt{\xi}-ir$, further goes to the point $1/\sqrt{\xi}+ir$ along the left
semicircle $|\om-1/\sqrt{\xi}|=r$ (so that 0 and $\sqrt\xi$ are left on the
left), and finally returns to the initial point $Re^{i\theta}$ in parallel to
the real line. \footnote{In \cite{BO3} the definition of the contour was
slightly different: there we assumed that before and after going around 0, the
path goes exactly on the positive real axis. In the context of the present
section such a definition works equally well but it is not suitable for the
integral representation appearing in Section \ref{6}. This is why we have to
slightly modify the definition of \cite{BO3}.} Note that $|\om|>1$ along the
whole contour provided that $r$ is so small that $1/\sqrt\xi-r>1$. This
condition also implies that $\sqrt\xi$ lies inside the contour, as required.

Next, consider the following contour in the complex $u$--plane (see Fig.~2).
Here $\rho>0$ is the parameter, the integration path starts at infinity, goes
towards 0 in the right half--plane, along the line $\Im u=-\rho$, then turns
around 0 in the negative direction along the semicircle $|u|=\rho$, and finally
returns to infinity in the right half--plane, along the line $\Im u=\rho$. Let
us denote this contour as $[+\infty-i\rho,0-,+\infty+i\rho]$.

\medskip

\begin{center}
\scalebox{1.0}{\includegraphics{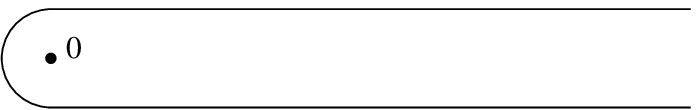}}

Figure 2. The contour $[+\infty-i\rho,0-,+\infty+i\rho]$.
\end{center}

\medskip

\begin{proposition}\label{5.1} Assume  $x,y\in\Z'$ are fixed. There exists the
limit
$$
\lim_{\xi\to1}\unK_{z,z',\xi}(x,y)=\unK_{z,z'}(x,y)
$$
with
\begin{multline}\label{f5.2}
\unK_{z,z'}(x,y)=
 \frac{\Ga(-z'-x+\frac12)\Ga(-z-y+\frac12)}
 {\bigl(\Ga(-z-x+\frac12)\Ga(-z'-x+\frac12)
 \Ga(-z-y+\frac12)\Ga(-z'-y+\frac12)\bigr)^{\frac12}}\\
\times\frac1{(2\pi i)^2}\oint\limits_{\{u_1\}} \oint\limits_{\{u_2\}}
\frac{(-u_1)^{z'+x-\frac12}(1+u_1)^{-z-x-\frac12}
(-u_2)^{z+y-\frac12}(1+u_2)^{-z'-y-\frac12}\,du_1 du_2}{u_1+u_2+1},
\end{multline}
where both contours $\{u_{1,2}\}$ are of the form
$[+\infty-i\rho,0-,+\infty+i\rho]$ as defined above, with $\rho<\frac12$.
\end{proposition}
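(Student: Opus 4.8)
The plan is to start from the double contour integral \eqref{f5.1}, deform the loops $\{\om_1\},\{\om_2\}$ to the contour $C(R,r,\xi)$ of Figure~1, and then introduce a scaling substitution that zooms into a neighbourhood of the two branch points $\sqrt\xi$ and $1/\sqrt\xi$, which merge at $\om=1$ as $\xi\to1$. Since the value of \eqref{f5.1} does not depend on the particular admissible pair of loops, replacing the circles by $C(R,r,\xi)$ is legitimate once I verify the two bullet conditions (that $C(R,r,\xi)$ surrounds $[0,\sqrt\xi]$, leaves $[1/\sqrt\xi,+\infty)$ outside, and keeps $\om_1\om_2\ne1$ on the product contour). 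I would take $r=\rho(1-\xi)/\sqrt\xi$, so that after the substitution the horizontal parts $\Im\om=\pm r$ land exactly on the lines $\Im u=\pm\rho$ of Figure~2.

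The substitution is $\om_j=\dfrac{1+(1-\xi)u_j}{\sqrt\xi}$, equivalently $u_j=\dfrac{\sqrt\xi\,\om_j-1}{1-\xi}$, which sends $\om=1/\sqrt\xi$ to $u=0$ and $\om=\sqrt\xi$ to $u=-1$. A direct computation gives
$$1-\sqrt\xi\,\om_j=-(1-\xi)u_j,\qquad 1-\frac{\sqrt\xi}{\om_j}=\frac{(1-\xi)(1+u_j)}{1+(1-\xi)u_j},$$
$$\om_1\om_2-1=\frac{(1-\xi)\bigl(1+u_1+u_2+(1-\xi)u_1u_2\bigr)}{\xi},\qquad d\om_j=\frac{1-\xi}{\sqrt\xi}\,du_j.$$
Collecting powers of $(1-\xi)$, the four power functions contribute $-2$, the Jacobian $+2$, the denominator $-1$, and the explicit prefactor of \eqref{f5.1} contributes $+1$; these sum to zero. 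Hence the transformed integrand carries no overall power of $(1-\xi)$, and as $\xi\to1$ the residual correction factors, which combine to $(1+(1-\xi)u_1)^{z}(1+(1-\xi)u_2)^{z'}$ together with harmless powers of $\xi$ and the extra term $(1-\xi)u_1u_2$ in the denominator, all tend to their trivial limits. The pointwise limit of the integrand is precisely the integrand of \eqref{f5.2}, and the $\Ga$--prefactor is $\xi$--independent and already coincides with the one in \eqref{f5.2}. I would also record that under $1-\sqrt\xi\,\om=-(1-\xi)u$ the prescription ``$\arg(1-\sqrt\xi\,\om)=0$ for $\om$ real negative'' turns into the branch of $(-u)^{\cdots}$ used in \eqref{f5.2}, so the branches match, and that the standing assumption $\rho<\tfrac12$ guarantees $\Re(u_1+u_2+1)\ge 1-2\rho>0$ along the contour, so the denominator $u_1+u_2+1$ never vanishes and the limit integral \eqref{f5.2} converges (its numerator decays like $|u_j|^{-1}$ in each variable, improved to $|u_j|^{-2}$ by the denominator).

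Next I would establish the contour correspondence and justify passing the limit under the integral sign. Under the substitution the horizontal parts and the small left semicircle of $C(R,r,\xi)$ map onto the lines $\Im u=\pm\rho$ and the semicircle $|u|=\rho$ of Figure~2, while the circular arc $|\om|=R$ maps to a far arc sitting at $|u|\sim R/(1-\xi)$, which recedes to infinity as $\xi\to1$. I would split each contour into a \emph{near part} of bounded $|u|$ and a \emph{far part} (the image of the arc together with the distant ends of the horizontal rays). On the near part the integrand converges pointwise to that of \eqref{f5.2} and is dominated, uniformly for $\xi$ near $1$, by a fixed integrable function of $(u_1,u_2)$, so dominated convergence applies; a final $3\epsilon$ argument, letting first $\xi\to1$ and then $R\to\infty$, would identify the integral over $C(R,r,\xi)$ with the integral over $[+\infty-i\rho,0-,+\infty+i\rho]$ in \eqref{f5.2}.

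The hard part will be controlling the far part uniformly. There the correction factors $(1+(1-\xi)u_j)^{z}$ and $(1+(1-\xi)u_j)^{z'}$ \emph{grow} precisely where $(1-\xi)|u_j|$ is of order one, and the denominator acquires the term $(1-\xi)u_1u_2$; for parameters with large $\Re z$ a crude absolute-value estimate on the arc does not give a vanishing contribution (it over-counts, since the true kernel entries are bounded). The genuine smallness comes from the oscillatory cancellation along the arc, so the far-part bound must be obtained by the careful contour analysis of the integrand on $C(R,r,\xi)$ developed in \cite{BO3}, rather than by naive domination. Controlling this uniform far-part estimate, and not the (routine) pointwise computation of the limiting integrand, is where the real work lies.
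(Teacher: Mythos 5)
Your computational core---the deformation to $C(R,r,\xi)$, the rescaling substitution, the cancellation of all powers of $1-\xi$, and the identification of the limiting integrand---matches the paper's Steps 2--3 exactly (your $u$ differs from the paper's by an innocuous factor $\sqrt\xi$). But there are two gaps, and the decisive one is that you do not actually prove the far-part estimate and you mischaracterize what it requires: you assert that a crude absolute-value bound on the big arc fails for large $\Re z$ and that one needs ``oscillatory cancellation.'' In fact no cancellation is needed. The paper's Step 4 observes that when at least one variable lies on the arc $|\om|=R$ one has $|\om_1\om_2|\ge R>1$, so the denominator $\om_1\om_2-1$ stays bounded away from $0$ and the integral of the \emph{modulus} factorizes into a product of two one-dimensional integrals. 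The arc integral is bounded uniformly in $\xi$ (every factor of the integrand is uniformly bounded on $|\om|=R$), and the $C^+$ integral, after your own substitution, is bounded in modulus by a constant times $(1-\xi)^{\mu}\int_C^{C_1/(1-\xi)}u^{\mu-1}\,du$ with $\mu=\Re(z'-z)$. The admissible parameters force $\mu\in(-1,1)$ ($\mu=0$ on the principal series, $|\mu|<1$ on the complementary series), so this quantity grows strictly slower than $(1-\xi)^{-1}$ in every case, and the explicit prefactor $1-\xi$ kills it. Your worry about large $\Re z$ is a red herring: $\Re z$ and $\Re z'$ enter the exponents with opposite signs, and only their difference $\mu$ survives the absolute-value estimate. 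As written, your proof delegates its hardest step to ``the careful contour analysis of \cite{BO3}'' under a wrong diagnosis of what that analysis does.

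The second, smaller gap is your justification of the absolute convergence of the limit integral \eqref{f5.2}. The numerator does not decay like $|u_j|^{-1}$ in each variable separately: its modulus is of order $|u_1|^{\mu-1}|u_2|^{-\mu-1}$ at infinity, which for $\mu$ near $\pm1$ barely decays in one of the variables. Convergence requires distributing the decay supplied by the denominator between the two variables, e.g.\ via $|u_1+u_2+1|^{-1}\le u_1^{-\nu}u_2^{-(1-\nu)}$ with $\nu=\tfrac12+\tfrac12\mu$, as in the paper's Step 1. Once that convergent majorant is in hand, your passage to the limit on the interior parts is fine (the correction factor $(1+(1-\xi)u_1)^{z}(1+(1-\xi)u_2)^{z'}$ is uniformly bounded there because $|\om_\xi(u)|\le R$ and its argument is small), and no $R\to\infty$ limit is needed: $R$ stays fixed, and the truncated contour exhausts $[+\infty-i\rho,0-,+\infty+i\rho]$ automatically as $\xi\to1$.
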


\medskip
\noindent{\it Comments\/}. To give a sense to the function $(-u)^\al$ with
$\al\in\C$ we cut the complex $u$--plane along $[0,+\infty)$ and agree that the
argument of $-u$ equals 0 when $u$ intersects the negative real axis
$(0,-\infty)$. This is equivalent to say that the argument of $-u$ equals
$+\pi$ just below the cut and $-\pi$ just above the cut. Thus, one could remove
the minus sign from $(-u_1)^{z'+x-\frac12}$ and $(-u_2)^{z+y-\frac12}$ and put
instead in front of the integral the extra factor $e^{i\pi(z'+z+x+y-1)}$, with
the understanding that the argument of $u$ is equal to 0 just below the cut and
to $-2\pi$ just above it.

We also assume that the branch of $(1+u)^C$ (where $u$ is $u_1$ or $u_2$ and
$C$ equals $-z-x-\frac12$ or $z+y-\frac12$) is defined with the understanding
that the argument of $1+u$ is in $(-\pi/2,\pi/2)$.
\medskip

\begin{proof} The existence of the limit kernel was first established in
\cite{BO2}. In that paper we worked with the integrable form of the kernels
$\unK_{z,z',\xi}(x,y)$ and $\unK_{z,z'}(x,y)$, as written down above in
Subsection \ref{1-4}. In the present form, the claim of the proposition is a
particular case of a more general result obtained in \cite[\S9]{BO3}. I will
reproduce, with minor variations and with more details, the argument of
\cite{BO3} because all its steps will be employed in the sequel.
\footnote{Incidentally I will also correct minor inaccuracies in
\cite[\S9]{BO3}.}

{\it Step\/} 1. Let us check that the double integral in \eqref{f5.2} is
absolutely convergent:
\begin{equation}\label{f5.3}
\oint\limits_{\{u_1\}} \oint\limits_{\{u_2\}}
\left|\frac{(-u_1)^{z'+x-\frac12}(1+u_1)^{-z-x-\frac12}
(-u_2)^{z+y-\frac12}(1+u_2)^{-z'-y-\frac12}\,du_1
du_2}{u_1+u_2+1}\right|<+\infty
\end{equation}

First of all, the restriction $\rho<\frac12$ guarantees that the denominator
$u_1+u_2+1$ remains separated from 0 as $u_1$ and $u_2$ range over the
contours.

Set $\mu=\Re(z'-z)$. For the principal series $\mu=0$, and for the
complementary series $-1<\mu<1$.

Note that the modulus of the integrand is bounded from above, so that we have
to check the convergence only in the case when at least one of the variables
goes to infinity.

Fix a constant $C>0$ large enough. In the region where $|u_1|\le C$ and
$|u_2|\le C$, as was already pointed out, there is no problem of convergence.

Assume $|u_2|\le C$ while $|u_1|\ge C$. Then we may exclude $u_2$. That is, we
replace the quantity $\left|(-u_2)^{z+y-\frac12}(1+u_2)^{-z'-y-\frac12}\right|$
by an appropriate constant and also use the bound
$$
\frac1{|u_1+u_2+1|}\le\const\frac1{|u_1|}.
$$
This allows one to discard integration over $u_2$. Further, for large $|u_1|$,
on our contour, the arguments of $|u_1|$ and $|1+u_1|$ are small, which makes
it possible to replace both $u_1$ and $1+u_1$ by the real variable $u=\Re u_1$.
Then we are lead to the one--dimensional integral
$$
\int_C^{+\infty}u^{\mu-2}du
$$
whose convergence is obvious because $\mu<1$. Interchanging $u_1\leftrightarrow
u_2$ gives the same effect, due to the symmetry $z\leftrightarrow z'$.

Assume now that both variables are large, $|u_1|\ge C$ and $|u_2|\ge C$. Then
the same argument as above leads to the real integral
$$
\iint\limits_{u_1\ge C, u_2\ge
C}\frac{u_1^{\mu-1}u_2^{-\mu-1}}{u_1+u_2}du_1du_2.
$$
To handle it we use the bound
$$
\frac1{u_1+u_2}\le\frac{1}{u_1^\nu u_2^{1-\nu}}\,,
$$
which holds for any $\nu\in(0,1)$. Let us choose $\nu=\frac12+\frac12\mu$; then
$1-\nu=\frac12-\frac12\mu$. Since $\mu\in(-1,1)$, the requirement $\nu\in(0,1)$
is satisfied. This bound reduces our double integral to the product of two
simple integrals of the form
$$
\int_{u\ge C}u^{\pm\frac12\mu-\frac32}du,
$$
which are convergent.

{\it Step\/} 2. Let us turn to the kernel \eqref{f5.1}. Consider the contour
$\{\om\}=C(R, r,\xi)$ where $r=r(\xi)=(1-\xi)\rho$ with $\rho<\frac12$, as
above, and $R>0$ is large enough and fixed. It is readily verified that
$1/\sqrt\xi-r(\xi)>1$. As mentioned above, this inequality guarantees that
$|\om|>1$ on the whole contour, so that  the both contours in \eqref{f5.1} can
be deformed to the form $C(R,(1-\xi)\rho,\xi)$ without changing the value of
the double integral.

Let us split each contour on two parts, the big arc on the circle $|\om|=R$,
which we will denote as $C^-(R, (1-\xi)\rho,\xi)$, and the rest (inside the
circle), denoted as $C^+(R, (1-\xi)\rho,\xi)$.

The $\Ga$--factors in \eqref{f5.1} and in \eqref{f5.2} are the same, so that we
may ignore them. On the contrary, the prefactor $1-\xi$ in \eqref{f5.1} will
play the key role.

Our plan for the remainder of the proof is as follows: First, we show that when
we restrict the double integral in \eqref{f5.1} (together with the prefactor
$1-\xi$) on the product of two copies of $C^+(R, (1-\xi)\rho,\xi)$ and pass to
the limit as $\xi\to1$, we get the integral in \eqref{f5.2}. Next, we check
that the contribution from the rest of the double integral is asymptotically
negligible.

{\it Step\/} 3. Here we assume that both $\om_1$ and $\om_2$ range over the
contour $C^+(R, (1-\xi)\rho,\xi)$. Make the change of variables $\om_1\to u_1$,
$\om_2\to u_2$ according to the relation
\begin{equation}\label{f5.4}
\om=\om_\xi(u)=\frac1{\sqrt\xi}+(1-\xi)u.
\end{equation}
After this transformation, the contour $C^+(R, (1-\xi)\rho,\xi)$ turns into a
truncation of the contour $[+\infty-i\rho,0-,+\infty+i\rho]$ (we have to impose
the constraint $\Re u\le (1-\xi)^{-1}\wt R$, where $\wt R=R-(1/\sqrt\xi)\approx
R-1$). As $\xi$ goes to 1, the threshold of the truncation shifts to the right,
and in the limit we get the whole contour $[+\infty-i\rho,0-,+\infty+i\rho]$.

Substituting
\begin{gather*}
1-\sqrt\xi\om=-(1-\xi)\sqrt\xi\cdot u\\
\om-\sqrt\xi=(1-\xi)\left(u+\frac1{\sqrt\xi}\right)\\
\om_1\om_2-1=\frac{1-\xi}\xi\,\left(\sqrt\xi(u_1+u_2)+1+\xi(1-\xi)u_1u_2\right)\\
d\om_1 d\om_2=(1-\xi)^2du_1du_2
\end{gather*}
into the integrand of \eqref{f5.1}, we see that all the terms consisting of
various powers of $1-\xi$, including the prefactor $1-\xi$, cancel out.  The
integrand that we get can be written as the product of the integrand of
\eqref{f5.2} with the following expression depending on $\xi$:
\begin{equation}\label{f5.5}
\begin{gathered}
\xi^{\frac12(z+z'+x+y-1)}\left(\frac{1+u_1}{\frac1{\sqrt\xi}+u_1}\right)^{z+x+\frac12}
\left(\frac{1+u_2}{\frac1{\sqrt\xi}+u_2}\right)^{z'+y+\frac12}\\
\times\frac{u_1+u_2+1}{\sqrt\xi(u_1+u_2)+1+\xi(1-\xi)u_1u_2}\,
(\om_\xi(u_1))^z(\om_\xi(u_2))^{z'}
\end{gathered}
\end{equation}

As $\xi$ goes to 1, this expression converges to 1 pointwise (note that
$\om_\xi(u)\to1$). On the other hand, as $u_1$ and $u_2$ range over the contour
$[+\infty-i\rho, 0-,+\infty+i\rho]$, the modulus of this expression remains
bounded, uniformly on $\xi$. Indeed, this is obvious for the first four
factors. As for the last term, $|(\om_\xi(u_1))^z(\om_\xi(u_2))^{z'}|$, it is
bounded because $|\om_\xi(u)|\le R$ and the argument of $\om_\xi(u)$ is close
to 0 for $\xi$ close to 1 (recall that $\om_\xi(u)$ is close to $[1,+\infty)$).

Consequently, by virtue of step 1, our integral converges absolutely and
uniformly on $\xi$, so that we may pass to the limit under the sign of the
integral, which results in the desired integral \eqref{f5.2}.

{\it Step\/} 4. On this last step, we will check that the contribution from the
remaining parts of the contours, together with the prefactor $1-\xi$, is
asymptotically negligible. To do this, let us evaluate the modulus of the
integrand in \eqref{f5.1}.

The crucial observation is that the factor $\om_1\om_2-1$ in the denominator
remains separated from 0. Indeed, when at least one of the variables ranges
over the big arc $C^-(R,(1-\xi)\rho,\xi)$ (which is just our case), we have
$|\om_1\om_2|\ge R>1$. Consequently, we may ignore this factor, and then our
double integral factorizes into the product of two one--dimensional integrals:
one is
$$
\oint\limits_{\{\om_1\}} \left|\left(1-\sqrt{\xi}\om_1\right)^{z'+x-\tfrac12}
\left(1-\frac{\sqrt{\xi}}{\om_1}\right)^{-z-x-\tfrac12}
\om_1^{-x-\frac12}\,d\om_1\right|
$$
and the other has the same form, only  $x$ is replaced by $y$ and $z$ is
interchanged with $z'$:
$$
\oint\limits_{\{\om_2\}} \left|\left(1-\sqrt{\xi}\om_2\right)^{z+y-\tfrac12}
\left(1-\frac{\sqrt{\xi}}{\om_2}\right)^{-z'-y-\tfrac12}
\om_2^{-y-\frac12}\,d\om_2\right|.
$$

If one of the variables $\om_1, \om_2$ ranges over the big arc
$C^-(R,(1-\xi)\rho,\xi)$, then the integrand of the corresponding integral is
bounded uniformly on $\xi$, so that this integral remains bounded. Thus, the
case when both $\om_1$ and $\om_2$ range over the big arc is trivial: the
prefactor $1-\xi$ forces the whole expression to go to 0.

Examine now the case when one of the variables (say, $\om_1$) ranges over
$C^+(R,(1-\xi)\rho,\xi)$, while the other variable (hence, $\om_2$) ranges over
the big arc. Then we are left with the first integral together with the
prefactor $1-\xi$.

Let us make the same change of a variable as above: $\om_1=\om_\xi(u)$.  Then
the integral reduces to
$$
(1-\xi)^\mu(\sqrt\xi)^{z'+x-\frac12}\oint\limits_{\{u\}}
\left|(-u)^{z'+x-\frac12}\left(u+\frac1{\sqrt\xi}\right)^{-z-x-\frac12}
(\om_\xi(u))^zdu\right|.
$$
The quantity $|(\om_\xi(u))^z|$ is bounded uniformly on $\xi$ and hence may be
ignored. The factor $(\sqrt\xi)^{z'+x-\frac12}$ is inessential, too. Further,
using the same argument as on step 1, we reduce our expression  to the real
integral
$$
(1-\xi)^\mu\int_{C}^{C_1/(1-\xi)}u^{\mu-1}du
$$
where the upper limit arises due to the fact that $\Re(\om_\xi(u))\le R$.
Recall that $\mu=\Re(z'-z)\in(-1,1)$.

If $\mu\in(-1,0)$ then the integral is uniformly convergent, so that the whole
expression grows as  $(1-\xi)^\mu$.

If $\mu=0$ then the integral grows as $\log\left((1-\xi)^{-1}\right)$, and so
is the growth of the whole expression.

If $\mu\in(0,1)$ then the integral grows as $(1-\xi)^{-\mu}$ and the whole
expression remains bounded.

In all these cases the small prefactor $(1-\xi)$ in \eqref{f5.1} dominates and
makes the result asymptotically negligible.
\end{proof}

\begin{proposition}\label{5.2} The matrix
$$
\left[(m+\tfrac12)^{-1/2}
(K_{z,z'})_{++}(m+\tfrac12,n+\tfrac12)(n+\tfrac12)^{-1/2}\right]_{m,n=0,1,2,\dots}
$$
is of trace class.
\end{proposition}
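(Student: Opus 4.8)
The plan is to identify the displayed matrix with the $++$ block of the operator $A_hK_{z,z'}A_h$ on $\ell^2(\Z')$ and to derive the result from two soft inputs: nonnegativity and the density asymptotics of Corollary \ref{1.6}. First, as was already noted in the scheme preceding the proposition, on the $++$ block there is no difference between $K_{z,z'}$ and $\unK_{z,z'}$: by the block relations recorded in Subsection \ref{1-5} one has $(K_{z,z'})_{++}=(\unK_{z,z'})_{++}$. Since $\unK_{z,z'}$ is an orthogonal projection (Subsection \ref{1-4}), it is a nonnegative operator, and the compression of a nonnegative operator to the subspace $H_+=\ell^2(\Z'_+)$ — which is exactly the block $(\unK_{z,z'})_{++}$ — is again nonnegative. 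Conjugation by the self--adjoint multiplication operator $A_h$ preserves nonnegativity, and $A_h$ restricted to $\Z'_+$ is bounded because $h(x)=|x|^{-1/2}\le\sqrt2$ there. Hence the matrix in the statement represents a bounded, nonnegative operator on $\ell^2(\Z'_+)$.

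I would then invoke the elementary fact that a bounded nonnegative operator is of trace class if and only if the sum of its diagonal entries in an orthonormal basis is finite. So everything reduces to verifying that
$$
\sum_{m=0}^\infty(m+\tfrac12)^{-1}\,(K_{z,z'})_{++}(m+\tfrac12,m+\tfrac12)
=\sum_{m=0}^\infty(m+\tfrac12)^{-1}\,\unK_{z,z'}(m+\tfrac12,m+\tfrac12)<\infty.
$$

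To estimate the diagonal I would use Corollary \ref{1.6}. By the very definition of a determinantal measure, $\unK_{z,z'}(x,x)$ is the value at $x$ of the density of $\unP_{z,z'}$, which on $\Z'_+$ coincides with the density $\rho_1^{(z,z')}(x)$ of $P_{z,z'}$, and the corollary gives $\rho_1^{(z,z')}(x)\sim C(z,z')/|x|$ as $|x|\to\infty$. Consequently the $m$--th summand behaves like $C(z,z')/(m+\tfrac12)^2$, a convergent series, so the trace is finite and the operator is of trace class. In fact there is no serious obstacle in this proposition: the entire argument is a soft consequence of the projection property together with the $|x|^{-1}$ decay of the diagonal already established in Corollary \ref{1.6}. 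The only points that deserve a line of justification are that the compression of a projection to $H_+$ is nonnegative and that finiteness of the trace suffices for trace--class membership of a nonnegative operator. The genuinely delicate analytic work — the uniform, $\xi$--dependent control of the diagonal needed for the convergence of the pre--limit traces to this one — does not enter here and is carried out separately in the proof of the convergence of traces.
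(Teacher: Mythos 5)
Your proof is correct, and the reduction it starts from (the $++$ block of a projection is nonnegative, conjugation by the self--adjoint bounded operator $A_h$ preserves nonnegativity, and a bounded nonnegative operator is trace class iff its diagonal sum is finite) is exactly the reduction the paper itself performs in the scheme preceding Proposition \ref{5.2}. Where you genuinely diverge is in how the finiteness of $\sum_m (m+\tfrac12)^{-1}\unK_{z,z'}(m+\tfrac12,m+\tfrac12)$ is established. You get it essentially for free from Corollary \ref{1.6}: since $\unK_{z,z'}(x,x)=\rho_1^{(z,z')}(x)$ on $\Z'_+$ and this is $C(z,z')/x+O(x^{-2})$, the terms are $O((m+\tfrac12)^{-2})$ and the series converges. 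That is a legitimate and considerably softer argument, since the diagonal asymptotics come straight from the integrable form of the kernel (Theorems \ref{1.4}, \ref{1.5}) and the expansion of the $\psi$--function. The paper instead proves the same finiteness by inserting the factor $(m+\tfrac12)^{-1}$ into the double contour integral representation \eqref{f5.2}, summing over $m$ under the integral sign, and bounding the resulting logarithmic factor $F(u_1,u_2)$ by $\const|u_1|^\de|u_2|^\de$. The reason for this heavier route is not Proposition \ref{5.2} itself but Proposition \ref{5.4}: the absolutely convergent four--variable-free integral with the summed factor $F(u_1,u_2)$ is precisely the integrable majorant that justifies interchanging $\lim_{\xi\to1}$ with the sum over $m$ and the contour integration there. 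So your proof buys simplicity for the statement at hand, at the cost that the dominating-integral estimate would then have to be derived from scratch inside the proof of the convergence of traces — a cost you correctly anticipate in your closing remark, but which should be made explicit if your argument were substituted into the paper.
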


\begin{proof} We have to prove that
$$
\sum_{m=0}^\infty
(m+\tfrac12)^{-1}(K_{z,z'})_{++}(m+\tfrac12,m+\tfrac12)<+\infty.
$$

Since the $++$ blocks of $K_{z,z'}$ and $\unK_{z,z'}$ are the same, we may use
formula \eqref{f5.2}. On the diagonal $x=y=m+\frac12$, the ratio in
\eqref{f5.2} formed by the $\Ga$--factors equals 1, so that we may omit them.
Therefore,
\begin{multline*} (K_{z,z'})_{++}(m+\tfrac12,m+\tfrac12)\\=\frac1{(2\pi
i)^2}\oint\limits_{\{u_1\}}
\oint\limits_{\{u_2\}}\frac{(-u_1)^{z'+m}(1+u_1)^{-z-m-1}
(-u_2)^{z+m}(1+u_2)^{-z'-m-1}\,du_1 du_2}{u_1+u_2+1}
\end{multline*}
where the contours are the same as in \eqref{f5.2}.

We have
\begin{multline*}
 (K_{z,z'})_{++}(m+\tfrac12,m+\tfrac12)\\
 \le \frac1{4\pi^2}\oint\limits_{\{u_1\}} \oint\limits_{\{u_2\}}
 \left| \frac{(-u_1)^{z'+m}(1+u_1)^{-z-m-1}
(-u_2)^{z+m}(1+u_2)^{-z'-m-1}\,du_1 du_2}{u_1+u_2+1}\right|\\
=\frac1{4\pi^2}\oint\limits_{\{u_1\}}
\oint\limits_{\{u_2\}}\left|\frac{u_1u_2}{(1+u_1)(1+u_2)}\right|^m\cdot\left|
\frac{(-u_1)^{z'}(1+u_1)^{-z-1}(-u_2)^{z}(1+u_2)^{-z'-1}du_1
du_2}{u_1+u_2+1}\right|\,.
\end{multline*}

Introduce the factor $(m+\tfrac12)^{-1}$ inside the integral and sum over
$m=0,1,\dots$. Observe that
$$
\gathered \sum_{m=0}^\infty
(m+\tfrac12)^{-1}\left|\frac{u_1u_2}{(1+u_1)(1+u_2)}\right|^m\\
\le 2+2\sum_{m=1}^\infty m^{-1}\left|\frac{u_1u_2}{(1+u_1)(1+u_2)}\right|^m\\
=2+2\log\left(\dfrac1{1-\left|\dfrac{u_1u_2}{(1+u_1)(1+u_2)}\right|}\right)=:F(u_1,u_2).
\endgathered
$$
Thus, we have to prove that
$$
\oint\limits_{\{u_1\}} \oint\limits_{\{u_2\}}F(u_1,u_2)\cdot\left|
\frac{(-u_1)^{z'}(1+u_1)^{-z-1}(-u_2)^{z}(1+u_2)^{-z'-1}du_1
du_2}{u_1+u_2+1}\right|<+\infty.
$$

Without the factor $F(u_1,u_2)$, this integral coincides with the integral
\eqref{f5.3} with $x=y=\frac12$, whose convergence has already been verified
(see step 1 in the proof of Proposition \ref{5.1}). Let us show that the extra
factor $F(u_1,u_2)$ does not add very much. Indeed, it grows only as both $u_1$
and $u_2$ go to infinity, so that we may assume that both $u_1$ and $u_2$ are
far from the origin. If $u$ is a point on one of the contours, far from the
origin, then writing $u=|u|e^{i\theta}$ we have $|u|$ large and $\theta$ small.
Then
$$
\left|\frac{u}{1+u}\right|^2
=\frac{|u|^2}{|u|^2+2|u|\cos\theta+1}=1-2|u|^{-1}\cos\theta+O(|u|^{-2})
$$
and consequently
$$
\left|\frac{u}{1+u}\right|=1-|u|^{-1}\cos\theta+O(|u|^{-2})
$$
with $\cos\theta\to1$ as $u\to\infty$. This allows one to estimate the growth
of the logarithm in $F(u_1,u_2)$. Omitting unessential details we get that it
behaves roughly as
$$
\log\left(\frac1{|u_1|^{-1}+|u_2|^{-1}}\right)\le\const |u_1|^\de|u_2|^\de,
$$
where $\de>0$ can be chosen arbitrarily small.

Using this bound and examining the argument of step 1 in Proposition \ref{5.1}
we see that the same arguments work equally well with the extra factor
$F(u_1,u_2)$.
\end{proof}

In the sequel we use the notation
$$
\epsi=1-\xi.
$$

\begin{lemma}\label{5.3} Fix an arbitrary $c\in(0,\frac12)$. If $R$ is large enough
and $\rho<\frac12$ then for all sufficiently small $\epsi$ the following
estimate holds
$$
\left|\frac{1-\sqrt{\xi}\om}{\om-\sqrt\xi}\right|<1-c\epsi, \qquad \om\in
C(R,\epsi\rho,\xi).
$$
\end{lemma}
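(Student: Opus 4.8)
The plan is to exploit the fact that the quantity to be estimated is the modulus of a M\"obius transformation. Writing $T(\om) = \dfrac{1 - \sqrt\xi\,\om}{\om - \sqrt\xi}$, a direct computation gives the closed form
$$
|T(\om)|^2 = \frac{1 - 2\sqrt\xi\,\Re\om + \xi|\om|^2}{|\om|^2 - 2\sqrt\xi\,\Re\om + \xi},
$$
from which one reads off that $|T(\om)| = 1$ exactly on $|\om| = 1$, that $T$ fixes $\pm 1$, and that $T$ carries the exterior $\{|\om| > 1\}$ into the open unit disk. Since the contour $C(R,\epsi\rho,\xi)$ satisfies $|\om| > 1$ throughout, the crude bound $|T| < 1$ is immediate; the real issue is the sharp bound $|T| < 1 - c\epsi$. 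The source of difficulty is that at $\xi = 1$ the map degenerates to the constant $T \equiv -1$, so $|T| \equiv 1$, and we are tracking an $O(\epsi)$ perturbation of a marginal situation, uniformly along a contour that itself shrinks with $\epsi$. I would split the contour into two regimes: the two horizontal segments together with the small left semicircle of radius $\epsi\rho$ about $1/\sqrt\xi$, and the big arc $|\om| = R$.

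For the segments and the semicircle I would use the substitution $\om = \om_\xi(u) = \frac1{\sqrt\xi} + \epsi u$ of \eqref{f5.4}, under which these pieces map to bounded curves in the $u$-plane ($\Re u \ge 0$ on the segments, $|u| = \rho$ on the semicircle). A short calculation then yields $T = -\xi u/(1 + \sqrt\xi\, u)$, so that, after clearing the positive quantity $|1 + \sqrt\xi\, u|^2 = 1 + 2\sqrt\xi\,\Re u + \xi|u|^2$, the target inequality $|T|^2 < (1 - c\epsi)^2$ becomes
$$
(1 - c\epsi)^2\bigl(1 + 2\sqrt\xi\,\Re u\bigr) + \epsi\,\ka\,\xi|u|^2 > 0,
\qquad \ka := 1 - 2c + c^2\epsi > 0 .
$$
On these pieces one has $\Re u \ge -\rho$, and the hypothesis $\rho < \tfrac12$ forces $1 + 2\sqrt\xi\,\Re u \ge 1 - 2\rho > 0$; as the second term is nonnegative, the inequality holds strictly for every small $\epsi$. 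This is exactly where $\rho < \tfrac12$ enters: it keeps the contour far enough from the fixed point $\om = 1$ to win the marginal $O(\epsi)$ competition.

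For the big arc I would argue directly in $\om$. With $|\om| = R$ fixed, $|T(\om)|^2$ is a ratio of two affine functions of $p = \Re\om$, and a one-line differentiation gives $\frac{d}{dp}|T(\om)|^2 = -2\sqrt\xi\,(1-\xi)(R^2-1)/D(p)^2 < 0$ for $R>1$, so $|T|$ is maximal at the leftmost point $\om = -R$, which lies on the arc. There
$$
|T(-R)| = \frac{1 + \sqrt\xi\,R}{R + \sqrt\xi} = 1 - \frac{(1-\sqrt\xi)(R-1)}{(R+1) - (1-\sqrt\xi)},
$$
and since $1 - \sqrt\xi = \tfrac12\epsi + O(\epsi^2)$ the requirement $|T(-R)| < 1 - c\epsi$ reduces, in the limit $\epsi \to 0$, to $\frac{R-1}{2} > c(R+1)$, i.e.\ $R > \frac{1+2c}{1-2c}$. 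Taking $R$ strictly above this threshold makes the limiting inequality strict, and by continuity it persists for all sufficiently small $\epsi$; this is where ``$R$ large enough'' is used.

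I expect the main obstacle to be the uniform bookkeeping of the $O(\epsi)$ terms rather than any single hard estimate: because $|T| \equiv 1$ at $\xi = 1$, every bound is marginal at leading order and only the first-order correction in $\epsi$ decides the sign. The two hypotheses are each tailored to one marginal region --- $\rho < \tfrac12$ for the neighborhood of the fixed point $\om = 1$ (the semicircle) and $R$ large for the neighborhood of $\om = -R$ on the outer arc --- while on the intermediate horizontal segments, where $\Re u \ge 0$, the inequality holds with room to spare.
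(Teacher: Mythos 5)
Your argument is correct: the identity $(1-c\epsi)^2\xi-\xi^2=\epsi\xi(1-2c+c^2\epsi)$ that drives the estimate near $1/\sqrt\xi$, the monotonicity of $|T|^2$ in $\Re\om$ on the circle $|\om|=R$, and the resulting threshold $R>\frac{1+2c}{1-2c}$ all check out, and the two regimes together do cover the whole contour.

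The route differs from the paper's in execution, though not in substance. The paper also starts from the M\"obius map $T$, but instead of estimating $|T|$ forward along the contour it inverts $T$ and uses the fact that the preimage of the circle $|\om'|=1-c\epsi$ is a circle $S$ symmetric about the real axis; it then locates the two real intersection points, $\om_-=\frac{c+\frac12}{c-\frac12}+O(\epsi)=-\frac{1+2c}{1-2c}+O(\epsi)$ and $\om_+=1+O(\epsi^2)$, and concludes that the contour lies in the exterior of $S$ because $\om_+$ sits to the left of the leftmost point $1/\sqrt\xi-\epsi\rho\approx1+(\frac12-\rho)\epsi$ of the inner part (this is exactly where $\rho<\frac12$ enters) and because a circle of sufficiently large radius $R$ encloses $S$. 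Note that $|\om_-|$ tends precisely to your threshold $\frac{1+2c}{1-2c}$, so the two critical regions and the two numerical thresholds coincide in the two proofs. What your forward computation buys is that you never need to invoke the circle-to-circle property of M\"obius maps, nor the (tacit) observation that $[\om_-,\om_+]$ is a diameter of $S$; the price is the explicit $u$-substitution and a separate monotonicity argument on the arc. One cosmetic remark: the horizontal segments do not map to \emph{bounded} curves in the $u$-plane (their images have length of order $(R-1)/\epsi$), but your inequality uses only $\Re u\ge-\rho$, so nothing is affected.
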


\begin{proof} Consider the transform $\om\mapsto
\om'=\frac{1-\sqrt{\xi}\om}{\om-\sqrt\xi}$. Its inverse has the form
$\om=\frac{1+\sqrt{\xi}\om'}{\om'+\sqrt\xi}$ and sends the interior part of the
circle $|\om'|=1-c\epsi$ to the exterior part $S^{\operatorname{ext}}$ of a
circle $S$. Thus, we have to check that $C(R,\epsi\rho,\xi)\subset
S^{\operatorname{ext}}$.

The circle $S$ is symmetric relative the real axis and intersects it at the
points $\om_\mp$ corresponding to $\om'_\mp=\mp(1-c\epsi)$. Let us prove that
both $\om_-$ and $\om_+$ are on the left of $\frac1{\sqrt\xi}-\epsi\rho$, the
leftmost point of the contour $C^+(R,\epsi\rho,\xi)$.

We have
$$
\om_-=\frac{1-\sqrt{\xi}(1-c\epsi)}{-(1-c\epsi)+\sqrt\xi}
=\frac{1-(1-\tfrac12\epsi+O(\epsi^2))(1-c\epsi)}{-(1-c\epsi)+(1-\tfrac12\epsi+O(\epsi^2))}
=\frac{c+\tfrac12}{c-\tfrac12}+O(\epsi)
$$
and
$$
\om_+=\frac{1+\sqrt{\xi}(1-c\epsi)}{(1-c\epsi)+\sqrt\xi}
=1+\frac{(1-\sqrt{\xi})c\epsi}{(1-c\epsi)+\sqrt\xi} =1+O(\epsi^2).
$$

Since $c<\frac12$, $\om_-$ lies on the left of 0. As for $\om_+$, it lies on
the left of
$$
\frac1{\sqrt\xi}-\epsi\rho\approx 1+(\tfrac12-\rho)\epsi,
$$
because $\rho<\frac12$ by the assumption.

This shows that the interior part of the contour, $C^+(R,\epsi\rho,\xi)$ lies
in $S^{\operatorname{ext}}$. The same also holds for the exterior part,
$C^-(R,\epsi\rho,\xi)$, because $R$ can be made arbitrarily large. Indeed, as
seen from the above expressions for the points $\om_-$ and $\om_+$, they do not
run to infinity as $\xi\to1$, so that if $R$ is chosen large enough, the circle
of radius $R$ will enclose the circle $S$ for any $\xi$.
\end{proof}

\begin{proposition}\label{5.4}
\begin{multline*}
\lim_{\xi\to1}\left(\sum_{m\ge0}(m+\tfrac12)^{-1}
(K_{z,z',\xi})_{++}(m+\tfrac12,m+\tfrac12)\right)\\
=\sum_{m\ge0}(m+\tfrac12)^{-1} (K_{z,z'})_{++}(m+\tfrac12,m+\tfrac12).
\end{multline*}
\end{proposition}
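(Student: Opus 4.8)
The plan is to express the trace as a double contour integral, carry out the summation over the diagonal inside the integral, and then rerun the contour-deformation argument of Proposition \ref{5.1} on the resulting summed integrand, using Proposition \ref{5.2} to supply an integrable majorant. Since on the $++$ block $K_{z,z',\xi}$ and $\unK_{z,z',\xi}$ agree (Subsection \ref{1-5}), I substitute $x=y=m+\frac12$ into \eqref{f5.1}, where the $\Ga$-factors cancel to $1$. Separating the $m$-dependent part of the integrand, one finds it equals $\bigl(\frac{1-\sqrt\xi\,\om_1}{\om_1-\sqrt\xi}\cdot\frac{1-\sqrt\xi\,\om_2}{\om_2-\sqrt\xi}\bigr)^{m}$ times an $m$-independent factor. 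By Lemma \ref{5.3} the modulus of this ratio is $<(1-c\epsi)^{2}<1$ on $C(R,\epsi\rho,\xi)$, uniformly in $m$; hence for each fixed $\xi$ the geometric series $\sum_{m\ge0}(m+\frac12)^{-1}(\,\cdots)^m$ converges absolutely and uniformly along the contours, and Fubini's theorem lets me interchange $\sum_m$ with the double integral. The pre-limit trace thus becomes a single double contour integral whose integrand is the product of $\sum_{m\ge0}(m+\frac12)^{-1}(\,\cdots)^m$ with the $m$-independent factor and the prefactor $\epsi$; likewise the limit trace is the analogous integral over the fixed $u$-contours obtained from \eqref{f5.2}, the summed geometric factor now being $\sum_{m\ge0}(m+\frac12)^{-1}\bigl(\frac{u_1u_2}{(1+u_1)(1+u_2)}\bigr)^m$.

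With the trace rewritten as a double integral, I would repeat the deformation of Proposition \ref{5.1}: deform both $\om$-contours to $C(R,\epsi\rho,\xi)$, split each into its inner arc $C^+$ and big arc $C^-$, and on $C^+\times C^+$ perform the substitution $\om_i=\om_\xi(u_i)$ of \eqref{f5.4}. As in step 3 there, the prefactor $\epsi$, the Jacobian, and all powers of $\epsi$ cancel, the bracketed ratio turns into $-\xi u_i/(1+\sqrt\xi u_i)\to -u_i/(1+u_i)$, and the truncated $u$-contour exhausts $[+\infty-i\rho,0-,+\infty+i\rho]$; consequently the integrand converges pointwise to the limit summed integrand. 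On the parts of the product that meet $C^-$ the denominator $\om_1\om_2-1$ stays bounded away from $0$, the ratio is still $<1$ by Lemma \ref{5.3}, and the prefactor $\epsi$ forces these contributions to vanish exactly as in step 4 of Proposition \ref{5.1}. Thus in the limit only the $C^+\times C^+$ part survives, and it reproduces the limit integral.

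The one real difficulty is a $\xi$-independent integrable majorant for the surviving $C^+\times C^+$ integral, needed to pass the limit under the integral sign. A crude use of the bound $(1-c\epsi)^{2m}$ from Lemma \ref{5.3}, pulled outside the sum, is hopeless: after the substitution the remaining factors carry a compensating $\epsi^{-1}$, so that estimate would blow up like $\log(1/\epsi)$. The correct majorant keeps the dependence on $u$: after the substitution the summed geometric factor is bounded by a constant multiple of $F(u_1,u_2)=2-2\log\bigl(1-|u_1u_2/((1+u_1)(1+u_2))|\bigr)$ uniformly for $\xi$ near $1$ (a routine consequence of $\om_\xi(u)\to1$ together with Lemma \ref{5.3}), which is precisely the function appearing in the proof of Proposition \ref{5.2}. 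That proposition establishes exactly that $F$ times the remaining integrand is absolutely integrable over the $u$-contours, so it furnishes the required $\xi$-independent majorant. Dominated convergence then gives convergence of the $C^+\times C^+$ integrals, and hence of the traces.
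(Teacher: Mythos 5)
Your proposal is correct and follows essentially the same route as the paper's proof: reduce to the contour representations \eqref{f5.1} and \eqref{f5.2} on the diagonal, treat the $C^+\times C^+$ part via the substitution \eqref{f5.4} with the logarithmic majorant $F(u_1,u_2)$ from Proposition \ref{5.2}, and kill the big-arc contributions using Lemma \ref{5.3} (whose summed factor grows only like $\log(\epsi^{-1})$) together with the prefactor $1-\xi$. The only difference is organizational — you sum over $m$ inside the integral first, whereas the paper keeps the sum outside and compares integrands term by term via the ratio \eqref{f5.5} — and both orderings are justified by the same estimates.
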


\begin{proof} We will argue as in the proof of Proposition \ref{5.1}, steps 2--4. The
role of step 1 in that proof will be played by Proposition \ref{5.2}.

First of all, observe that on the diagonal $x=y$, the expression formed by the
$\Ga$--factors in front of the integrals in \eqref{f5.1} and \eqref{f5.2}
equals 1. Consequently, we may ignore these factors.

Let $F_{m;\xi}(\om_1,\om_2)$ denote the integrand in the integral \eqref{f5.1}
corresponding to $x=y=m+\frac12$. Likewise, let $F_m(u_1,u_2)$ denote the
integrand in the integral \eqref{f5.2} with $x=y=m+\frac12$.  We have to prove
that
\begin{multline}\label{f5.6}
\lim_{\xi\to1}\left((1-\xi)\sum_{m=0}(m+\tfrac12)^{-1}
\oint\limits_{\{\om_1\}}\oint\limits_{\{\om_2\}}F_{m;\xi}(\om_1,\om_2)d\om_1d\om_2\right)\\
=\sum_{m=0}(m+\tfrac12)^{-1}
\oint\limits_{\{u_1\}}\oint\limits_{\{u_2\}}F_m(u_1,u_2)du_1du_2\,.
\end{multline}

We already dispose of all the necessary information to conclude that
\eqref{f5.6} holds provided that we truncate the both contours in the
left--hand side to $C^+(R,\epsi\rho,\xi)$. Indeed, the ratio
$$
\frac{(1-\xi)F_{m;\xi}(\om_\xi(u_1),\om_\xi(u_2))}{F_m(u_1,u_2)}
$$
is the particular case of \eqref{f5.5} corresponding to $x=y=m+\frac12$. The
argument of step 3 in Proposition \ref{5.1} shows  that this expression goes to
1 pointwise. Moreover, its modulus is bounded uniformly on both $\xi$ and $m$:
To see this observe that
$$
\left|\frac{1+u}{\frac1{\sqrt\xi}+u}\right|\le 1, \qquad u=u_1,u_2,
$$
and recall that $|\om_\xi(u)|\le R_1$, $u=u_1,u_2$.

Together with the result of Proposition \ref{5.2} this implies the claim.

Now we have to check that the contribution of the remaining parts of the
contours to the left--hand side of \eqref{f5.6} is asymptotically negligible.
This can be done by slightly modifying the argument of step 4 in Proposition
\ref{5.1}.

Indeed, the integrand $F_{m,\xi}(\om_1,\om_2)$ can be written in the form
$$
F_{m;\xi}(\om_1,\om_2)=F_{0;\xi}(\om_1,\om_2)
\left(\frac{1-\sqrt{\xi}\om_1}{\om_1-\sqrt\xi}\right)^m
\left(\frac{1-\sqrt{\xi}\om_2}{\om_2-\sqrt\xi}\right)^m .
$$
Compare our task with the one we had on step 4 in Proposition \ref{5.1} (where
we set $x=y=\frac12$). The only difference is that now we have in the integral
the extra factor equal to
\begin{equation}\label{f5.7}
\sum_{m=0}^\infty(m+\tfrac12)^{-1}\left|\left(\frac{1-\sqrt{\xi}\om_1}{\om_1-\sqrt\xi}\right)
\left(\frac{1-\sqrt{\xi}\om_2}{\om_2-\sqrt\xi}\right)\right|^m.
\end{equation}
By Lemma \ref{5.3}, the quantity \eqref{f5.7} is majorated by
$$
\sum_{m=0}^\infty(m+\tfrac12)^{-1}(1-c\xi)^{2m},
$$
which grows as $\log(\epsi^{-1})$. Such an extra factor does not affect the
estimates on step 4 of Proposition \ref{5.1}.
\end{proof}

\section{Convergence of off--diagonal blocks in Hilbert--Schmidt norm}\label{6}

Here we prove the claims of Theorem \ref{4.3} for the off--diagonal blocks. As
in Section \ref{5}, we apply the symmetry relations of Proposition \ref{1.9} to
reduce the case of the $-+$ block to that of the $+-$ block, and due to the
relations of Subsection \ref{1-5}, we may freely switch from $K_{z,z',\xi}$ to
$\unK_{z,z',\xi}$.

However, to compute the Hilbert--Schmidt norm of the $+-$ block we cannot use
anymore the basic contour integral representation \eqref{f5.1} as we did in
Section \ref{5}, because this would lead to divergent series in the integrand.
The reason of this is that the variable $y$, which previously ranged over
$\Z'_+$, will now range over $\Z'_-$. It turns out that we still may employ
essentially the same estimates and arguments as in Section \ref{5} but
beforehand we have to change the integral representation \eqref{f5.1}.

Return to the initial version of the contours $\{\om_1\}$ and $\{\om_2\}$ in
\eqref{f5.1}, when they were circles slightly greater than the unit circle, and
make the change of a variable $\om_2\to\om_2^{-1}$. Then the former condition
$\om_1\om_2\ne1$ will turn into the requirement that the second contour must
lie inside the first contour. Further, we deform the contours so that they take
the form $C(R_1,\epsi\rho_1,\xi)$ and $C(R_2,\epsi\rho_2,\xi)$, respectively,
where $R_1>R_2$ and $\rho_1<\rho_2$: these inequalities just guarantee that the
contour $C(R_2,\epsi\rho_2,\xi)$ lies inside $C(R_1,\epsi\rho_1,\xi)$.

After the transform $\om_2\to\om_2^{-1}$ the formula \eqref{f5.1} turns into
\begin{equation}\label{f6.1}
\begin{aligned}
 \unK_{z,z',\xi}&(x;y)\\
&=\frac{\Gamma(-z'-x+\frac12)\Gamma(-z-y+\frac12)}
{\bigl(\Gamma(-z-x+\frac12)\Gamma(-z'-x+\frac12)
\Gamma(-z-y+\frac12)\Gamma(-z'-y+\frac12)\bigr)^{\frac12}} \\
&\times\frac{1-\xi}{(2\pi i)^2}\oint\limits_{\{\om_1\}}\oint\limits_{\{\om_2\}}
\left(1-\sqrt{\xi}\om_1\right)^{z'+x-\tfrac12}
\left(1-\frac{\sqrt{\xi}}{\om_1}\right)^{-z-x-\tfrac12}\\
&\times\left(1-\sqrt{\xi}\om_2\right)^{-z'-y-\tfrac12}
\left(1-\frac{\sqrt{\xi}}{\om_2}\right)^{z+y-\tfrac12} \frac{\om_1^{-x-\frac
12}\om_2^{y-\frac 12}}{\omega_1-\omega_2} \,{d\om_1}{d\om_2}.
\end{aligned}
\end{equation}

According to Proposition \ref{5.1}, for any fixed $x,y\in\Z'$, there exists a
limit
$$
\lim_{\xi\to1}\unK_{z,z',\xi}(x,y)=\unK_{z,z'}(x,y),
$$
for which we dispose of the double contour integral representation
\eqref{f5.2}. However, now we will need a different integral representation,
which is consistent with \eqref{f6.1}:

\begin{proposition}\label{6.1} The following formula holds
\begin{multline}\label{f6.2}
\unK_{z,z'}(x,y)=
 \frac{\Ga(-z'-x+\frac12)\Ga(-z-y+\frac12)}
 {\bigl(\Ga(-z-x+\frac12)\Ga(-z'-x+\frac12)
 \Ga(-z-y+\frac12)\Ga(-z'-y+\frac12)\bigr)^{\frac12}}\\
\times\frac1{(2\pi i)^2}\oint\limits_{\{u_1\}} \oint\limits_{\{u_2\}}
\frac{(-u_1)^{z'+x-\frac12}(1+u_1)^{-z-x-\frac12}
(-u_2)^{-z'-y-\frac12}(1+u_2)^{z+y-\frac12}\,du_1 du_2}{u_1-u_2}.
\end{multline}
where the contours have the form
$$
\{u_1\}=[+\infty-i\rho_1,0-,+\infty+i\rho_1],\qquad
\{u_2\}=[+\infty-i\rho_2,0-,+\infty+i\rho_2]
$$
and $\rho_1<\rho_2$.
\end{proposition}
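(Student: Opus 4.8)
The plan is to prove \eqref{f6.2} by repeating, essentially verbatim, the limit--transition argument of Proposition \ref{5.1}, but starting from the representation \eqref{f6.1} for $\unK_{z,z',\xi}(x,y)$ instead of \eqref{f5.1}. Since \eqref{f6.1} arises from \eqref{f5.1} by the exact change of variable $\om_2\to\om_2^{-1}$, it represents the same pre--limit kernel, and by Proposition \ref{5.1} we already know that $\unK_{z,z',\xi}(x,y)\to\unK_{z,z'}(x,y)$ as $\xi\to1$. Hence it suffices to show that the right--hand side of \eqref{f6.1} converges, as $\xi\to1$, to the right--hand side of \eqref{f6.2}; the value of the limit is then automatically $\unK_{z,z'}(x,y)$. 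The $\Ga$--prefactors in \eqref{f6.1} and \eqref{f6.2} are literally identical and do not involve the contour variables, so throughout I factor them out and work only with the double integrals.

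First I would verify the analogue of step 1, namely absolute convergence of the double integral in \eqref{f6.2}. Two points differ from the proof of Proposition \ref{5.1}. The denominator is now $u_1-u_2$ instead of $u_1+u_2+1$; the hypothesis $\rho_1<\rho_2$ guarantees that the contours $[+\infty-i\rho_1,0-,+\infty+i\rho_1]$ and $[+\infty-i\rho_2,0-,+\infty+i\rho_2]$ are disjoint (on the horizontal stretches $|\Im(u_1-u_2)|\ge\rho_2-\rho_1$, and near the origin they run on semicircles of different radii), so that $|u_1-u_2|$ stays bounded away from $0$. For the behaviour at infinity, the second variable now enters as $(-u_2)^{-z'-y-\frac12}(1+u_2)^{z+y-\frac12}$, whose modulus decays like $|u_2|^{-\mu-1}$ with $\mu=\Re(z'-z)$, i.e. the mirror image under $z\leftrightarrow z'$ (that is $\mu\mapsto-\mu$) of the decay $|u_1|^{\mu-1}$ of the first variable. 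Thus the same splitting $\frac1{|u_1-u_2|}\le|u_1|^{-\nu}|u_2|^{-(1-\nu)}$ with $\nu=\frac12+\frac12\mu$ reduces the integral to a product of convergent one--dimensional integrals of the form $\int_C^{+\infty}u^{\pm\frac12\mu-\frac32}\,du$. This plays the role of step 1 and also supplies the domination needed below.

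Next I would carry out the limit transition as in steps 2--4. I deform the contours in \eqref{f6.1} to $C(R_1,\epsi\rho_1,\xi)$ and $C(R_2,\epsi\rho_2,\xi)$ with $R_1>R_2$, split each into its big arc on $|\om_i|=R_i$ and its inner part $C^+$, and on the product of the two inner parts perform the change of variable $\om_i=\om_\xi(u_i)=\frac1{\sqrt\xi}+(1-\xi)u_i$ of \eqref{f5.4}. Using $\om_1-\om_2=(1-\xi)(u_1-u_2)$ together with the substitutions listed after \eqref{f5.4}, one checks that all powers of $1-\xi$, including the prefactor $1-\xi$ and the factor $(1-\xi)^{-1}$ coming from the new denominator, cancel exactly, leaving the integrand of \eqref{f6.2} multiplied by a $\xi$--dependent correction factor analogous to \eqref{f5.5}. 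This correction factor again tends to $1$ pointwise and is bounded uniformly in $\xi$: the factors of the form $\left(\frac{1+u_i}{1/\sqrt\xi+u_i}\right)^{\bullet}$ have modulus $\le1$ exactly as in the proof of Proposition \ref{5.4}, and $|\om_\xi(u_i)^{\bullet}|$ is bounded because $|\om_\xi(u_i)|\le R_i$ with argument close to $0$. By the absolute convergence established above I may then pass to the limit under the integral sign and obtain the integral of \eqref{f6.2} over the full contours.

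Finally I would show that the contribution of the remaining parts of the contours is asymptotically negligible, as in step 4. The key simplification is that on the big arcs the denominator is harmless: if $\om_1$ ranges over $|\om_1|=R_1$ then $|\om_1-\om_2|\ge R_1-R_2>0$, so this factor may be discarded and the double integral factorizes into two one--dimensional integrals. The $\om_1$--integral is identical to the one in step 4, while the $\om_2$--integral, after the change of variable, produces a factor $(1-\xi)^{-\mu}\int_C^{C_1/(1-\xi)}u^{-\mu-1}\,du$, which is the $\mu\mapsto-\mu$ image of the estimate in step 4; since $-\mu\in(-1,1)$, in all three cases $\mu\lessgtr0$ and $\mu=0$ it is dominated by the prefactor $1-\xi$. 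Hence every configuration in which at least one variable stays on the big arc contributes $o(1)$, and combining this with the previous paragraph yields \eqref{f6.2}. The step requiring the most care is the bookkeeping ensuring that $\rho_1<\rho_2$ and $R_1>R_2$ keep $u_1-u_2$ (respectively $\om_1-\om_2$) bounded away from zero everywhere along the deformation, since this is precisely what replaces the role played in Section \ref{5} by the restriction $\rho<\frac12$ and by the factor $\om_1\om_2-1$.
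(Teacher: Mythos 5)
Your overall strategy is the paper's: check absolute convergence of the limit integral, pass to the limit on the inner parts of the contours via $\om=\om_\xi(u)$, and kill the rest with the prefactor $1-\xi$. But two of your key estimates are wrong, and both occur exactly where the $+-$ case genuinely differs from Section \ref{5}. First, in your step 1 the inequality $\frac1{|u_1-u_2|}\le|u_1|^{-\nu}|u_2|^{-(1-\nu)}$ is false: take $u_1=u_2+\rho$ with $u_2=T$ large, so the left side is $1/\rho$ while the right side tends to $0$. (The analogous bound in Proposition \ref{5.1} works only because the denominator there is $u_1+u_2+1$, which does grow when both variables are large.) Nor can you simply use $|u_1-u_2|\ge\rho_2-\rho_1$, since $\int u_1^{\mu-1}du_1$ and $\int u_2^{-\mu-1}du_2$ cannot both converge at infinity. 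The convergence of $\iint u_1^{\mu-1}u_2^{-\mu-1}|u_1-u_2|^{-1}du_1du_2$ really does depend on the decay of $|u_1-u_2|^{-1}$ off the near-diagonal region; the paper handles this by substituting $u_1=u+a$, $u_2=u$ with $a\ge\rho$ and estimating $\int_{a\ge\rho}a^{-2}\,da\int_{u\ge C/a}(u+1)^{\mu-1}u^{-\mu-1}du$, with three cases according to the sign of $\mu$. Your argument as written does not establish the absolute convergence that the whole dominated-convergence scheme rests on.

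Second, your closing claim that $R_1>R_2$ keeps $|\om_1-\om_2|$ bounded away from zero everywhere is also false, and this is precisely the ``exception'' the paper flags. When $\om_1$ ranges over the \emph{interior} part $C^+(R_1,\epsi\rho_1,\xi)$ and $\om_2$ over the big arc $C^-(R_2,\epsi\rho_2,\xi)$, the horizontal stretches of the $\om_1$-contour (at heights $\pm\epsi\rho_1$) pass right by the endpoints of the $\om_2$-arc (at $\approx R_2\pm i\epsi\rho_2$), so the two contours come within distance of order $\epsi(\rho_2-\rho_1)$ near the real point $R_2$. Your factorization argument only covers the case where $\om_1$ sits on its big arc of radius $R_1$, and your sentence ``every configuration in which at least one variable stays on the big arc contributes $o(1)$'' is not justified for this mixed configuration. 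The paper resolves it by dissecting $C^+(R_1,\epsi\rho_1,\xi)$ with a vertical line: on the left piece (separated from $R_2$) the denominator can be discarded as in step 4 of Proposition \ref{5.1}, while on the right piece the rest of the integrand is uniformly bounded and $|\om_1-\om_2|^{-1}$ has only an integrable, $(a^2+b^2)^{-1/2}$-type singularity, so the prefactor $1-\xi$ still wins. You need to supply an argument of this kind; asserting uniform separation of the contours will not do.
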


\begin{proof} The $\Ga$--factors in \eqref{f6.1} and \eqref{f6.2} are the same,
so that it suffices to prove that  the integral in \eqref{f6.1} together with
the prefactor $(1-\xi)$ converges to the integral in \eqref{f6.2}. We will
follow the scheme of the proof of Proposition \ref{5.1}.

{\it Step\/} 1. Let us check that that the integral \eqref{f6.2} is absolutely
convergent. Arguing as on step 1 of Proposition \ref{5.1} we reduce this claim
to finiteness of the real integral
\begin{equation}\label{f6.3}
\iint\limits_{\substack{u_1,u_2\ge C\\ |u_1-u_2|\ge \rho}}
\frac{u_1^{\mu-1}u_2^{-\mu-1}} {|u_1-u_2|}du_1 du_2,
\end{equation}
where $\mu=\Re(z'-z)\in(-1,1)$, $\rho=\rho_2-\rho_1>0$, and $C$ is a positive
constant.

By symmetry we may assume $u_1\ge u_2$. Making the change of variables
$u_1=u+a$, $u_2=u$, where $u\ge C$, $a\ge \rho$, we get the integral
\begin{gather*}
\int_{a\ge \rho}a^{-1}da\int_{u\ge C} (u+a)^{\mu-1}u^{-\mu-1}du\\
=\int_{a\ge \rho}a^{-2}da\int_{u\ge C/a} (u+1)^{\mu-1}u^{-\mu-1}du.
\end{gather*}
Consider the interior integral in the second line: For large $u$, the integrand
decays as $u^{-2}$, which is integrable near infinity, while for $u$ near 0,
the integrand behaves as $u^{-\mu-1}$. It follows that if $\mu<0$ then the
integral over $u$ converges uniformly on $a$; if $\mu=0$ then it grows as $\log
a$ for large $a$; and if $\mu>0$ then it grows as $a^{\mu}$. Consequently, the
double integral can be estimated by one of the three convergent integrals
$$
\int_{a\ge \rho}a^{-2}da, \qquad \int_{a\ge \rho}a^{-2}\log a\, da,  \qquad
\int_{a\ge \rho}a^{\mu-2}da \quad (0<\mu<1).
$$

{\it Step\/} 2. Relying on the result of step 1, we can verify the required
convergence of the integrals provided that we restrict the integration in
\eqref{f6.1} to interior parts of the contours. This is done exactly as on step
3 of Proposition \ref{5.1}. A minor simplification is that under the change of
a variable \eqref{f5.4}, the quantity $\om_1-\om_2$ is transformed simpler than
$\om_1\om_2-1$.

{\it Step\/} 3. Finally, we have to prove that the contributions from the
remaining parts of the contours is asymptotically negligible due to the
prefactor $1-\xi$. Here we argue exactly as on step 4 of Proposition \ref{5.1},
with the only exception: In the case when $\om_1$ ranges over the interior part
of the contour, $C^+(R_1,\epsi\rho_1,\xi)$, while $\om_2$ ranges over the
exterior part, $C^-(R_2,\epsi\rho_2,\xi)$, we cannot automatically discard the
denominator $\om_1-\om_2$. The reason is that in this special case, the two
parts of the contours come close at the distance of order $\epsi$, which
happens near the point $R_2$.

This difficulty can be resolved in the following way. Dissect
$C^+(R_1,\epsi\rho_1,\xi)$ into two parts by a vertical line $\Re \om_1=\const$
so that the points on the left be separated from $R_2$ while the points on the
right be separated from $1/\sqrt\xi$. Now we have two cases:

When $\om_1$ ranges over the left part, we may discard the denominator
$\om_1-\om_2$ and argue as on step 4 of Proposition \ref{5.1}.

When $\om_1$ ranges over the right part, the argument is different: Observe
that the modulus of the whole integrand in \eqref{f6.1}, except the denominator
$\om_1-\om_2$, is bounded uniformly on $\xi$, while the quantity
$|\om_1-\om_2|^{-1}$ is integrable, so that the prefactor $1-\xi$ makes the
contribution negligible. Checking the integrability of $|\om_1-\om_2|^{-1}$ is
easy: Indeed, here it is even unessential that $C^-(R_2,\epsi\rho_2,\xi)$ does
not contain a small arc around the point $R_2$. What is important is that the
singularity $|\om_1-\om_2|^{-1}$ arising near the point $R_2$ is of the same
kind as the singularity $(a^2+b^2)^{-1/2}$ in the real $(a,b)$--plane near the
origin.
\end{proof}

Although formulas \eqref{f6.1} and \eqref{f6.2} are valid for any $x,y\in\Z'$,
we will deal exclusively with $x\in\Z'_+$ and $y\in\Z'_-$. Below we set
$$
x=m+\tfrac12, \quad y=-n-\tfrac12, \qquad m,n\in\Z_+
$$
and use the notation
\begin{gather*}
(K_{z,z',\xi})_{+-}(m,n)=(-1)^n\unK_{z,z',\xi}(m+\tfrac12,-n-\tfrac12)\\
(K_{z,z'})_{+-}(m,n)=(-1)^n\unK_{z,z'}(m+\tfrac12,-n-\tfrac12).
\end{gather*}
Here the factor $(-1)^n$ comes from the factor $\epsi(y)$ in \eqref{f1.2}.

The next result is similar to Proposition \ref{5.2}:

\begin{proposition}\label{6.2} The matrix
$$
\left[(m+\tfrac12)^{-1/2}(K_{z,z'})_{+-}
(m,n)(n+\tfrac12)^{-1/2}\right]_{m,n\in\Z'_+}
$$
belongs to the Hilbert--Schmidt class.
\end{proposition}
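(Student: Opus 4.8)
The plan is to show directly that the Hilbert--Schmidt norm is finite, i.e.\ that
$$
\sum_{m,n\ge0}(m+\tfrac12)^{-1}(n+\tfrac12)^{-1}\bigl|(K_{z,z'})_{+-}(m,n)\bigr|^2<\infty .
$$
Since $\bigl|(K_{z,z'})_{+-}(m,n)\bigr|=\bigl|\unK_{z,z'}(m+\tfrac12,-n-\tfrac12)\bigr|$, everything reduces to a good pointwise bound on the Gamma kernel evaluated at one positive and one negative node. After separating the boundary row $m=0$ and column $n=0$ (controlled by the same estimate with one index fixed, giving even faster decay), it suffices to treat the tail $m,n\ge1$.

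First I would derive the estimate
$$
\bigl|\unK_{z,z'}(m+\tfrac12,-n-\tfrac12)\bigr|\le C\,\frac{m^{-\mu/2}n^{\mu/2}+m^{\mu/2}n^{-\mu/2}}{m+n+1},\qquad \mu:=\Re(z'-z)\in(-1,1),
$$
reading it off the integrable form of Theorem \ref{1.4}. There the denominator $x-y=m+n+1$ supplies the crucial $1/(m+n)$ decay, while the numerator $\mathcal P(x)\mathcal Q(y)-\mathcal Q(x)\mathcal P(y)$ is controlled by the asymptotics of $\mathcal P,\mathcal Q$. On the positive side Stirling gives $|\mathcal P(m+\tfrac12)|\sim\const\, m^{-\mu/2}$ and $|\mathcal Q(m+\tfrac12)|\sim\const\, m^{\mu/2}$; on the negative side one first applies the reflection formula $\Ga(z-n)=(-1)^n\pi/(\sin(\pi z)\Ga(n+1-z))$ to turn the large negative arguments into large positive ones, and then Stirling, obtaining $|\mathcal P(-n-\tfrac12)|\sim\const\, n^{-\mu/2}$ and $|\mathcal Q(-n-\tfrac12)|\sim\const\, n^{\mu/2}$. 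Multiplying out the two cross terms yields the displayed bound (for the principal series $\mu=0$ and the numerator is simply $O(1)$).

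Granting this, squaring gives $|\unK_{z,z'}(m+\tfrac12,-n-\tfrac12)|^2\le C'(m^{-\mu}n^{\mu}+m^{\mu}n^{-\mu}+2)/(m+n+1)^2$, so the Hilbert--Schmidt sum is dominated by
$$
\sum_{m,n\ge1}\frac{m^{-\mu}n^{\mu}+m^{\mu}n^{-\mu}+2}{mn\,(m+n)^2}.
$$
Using $(m+n)^2\ge4mn$ I bound a typical term by $\tfrac14 m^{-\mu-2}n^{\mu-2}$, so the double sum factorizes as $\bigl(\sum_m m^{-\mu-2}\bigr)\bigl(\sum_n n^{\mu-2}\bigr)$; both factors converge precisely because $|\mu|<1$ forces the exponents $-\mu-2$ and $\mu-2$ to lie below $-1$. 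The two remaining terms are handled identically (or are even smaller), which finishes the argument.

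The main obstacle is the pointwise bound, and specifically the appearance of a nontrivial prefactor: in Proposition \ref{5.2} the analogous Gamma prefactor was identically $1$ on the diagonal, which is what kept that estimate clean, whereas here, for one positive and one negative argument, it grows polynomially like $m^{-\mu/2}n^{\mu/2}$, and pinning down these exponents is exactly where the reflection formula and Stirling must be combined carefully. If instead one insists on staying within the contour--integral machinery of this section and argues from representation \eqref{f6.2} --- mirroring Proposition \ref{5.2} by writing $|\unK|^2=\unK^2$ as a product of two copies of \eqref{f6.2} and summing the geometric-type factors $\sum_k(k+\tfrac12)^{-1}r^k\sim\log\frac1{1-r}$ to produce logarithmic weights --- then the difficulty migrates to recovering the $1/(m+n+1)$ decay, which a crude absolute-value bound on \eqref{f6.2} simply does not see. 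In that route one must instead carry the extra polynomial prefactor and logarithmic weights through the genuine steepest-descent estimates of Proposition \ref{6.1} (its Step 1), which is the real technical work.
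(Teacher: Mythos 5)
Your argument is correct, and it takes a genuinely different route from the paper. The paper never leaves the contour--integral machinery: it writes $\bigl((K_{z,z'})_{+-}(m,n)\bigr)^2$ as a product of two copies of the representation \eqref{f6.2} with $z$ and $z'$ swapped in the second copy (which cancels the $\Ga$--prefactors), interchanges the sum over $m,n$ with the resulting $4$--fold integral, bounds the logarithmic weights $\sum_k(k+\frac12)^{-1}r^k$ by $\const\,|u|^{\de}$ factors, and reduces everything to the real integral \eqref{f6.8}, exactly as in Step 1 of Proposition \ref{6.1} --- so, as you anticipate, the $1/(m+n)$ decay is never made explicit; it is traded for integrability of $|u_1-u_2|^{-1}$ on the contours. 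Your route instead extracts a genuine pointwise bound $\bigl|\unK_{z,z'}(m+\tfrac12,-n-\tfrac12)\bigr|\le C\,(m^{-\mu/2}n^{\mu/2}+m^{\mu/2}n^{-\mu/2})/(m+n+1)$ from the integrable form of Theorem \ref{1.4} via Stirling and the reflection formula, after which the Hilbert--Schmidt sum converges by the elementary estimate $(m+n)^2\ge 4mn$; your exponent bookkeeping ($|\mu|<1$ forcing both $\sum m^{-\mu-2}$ and $\sum n^{\mu-2}$ to converge) is right, and for the principal series one even has $|\mathcal P|=|\mathcal Q|\equiv1$. This is more elementary and more informative (it exhibits the actual decay of the off--diagonal kernel), while the paper's version has two advantages: it treats all parameter cases uniformly, and it manufactures precisely the $4$--fold integral \eqref{f6.5}--\eqref{f6.6} that is reused verbatim in Proposition \ref{6.3} to prove convergence of the Hilbert--Schmidt norms, which your pointwise bound on the limit kernel alone would not give. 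One small gap to patch: the degenerate case $z=z'$ of the complementary series is not covered by Theorem \ref{1.4}; there Theorem \ref{1.5} gives a numerator $\psi(z+m+1)-\psi(z-n)=O(\log m+\log n)$ rather than $O(1)$, which still yields a convergent sum $\sum(\log m+\log n)^2/\bigl(mn(m+n)^2\bigr)$, but the logarithm should be acknowledged.
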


\begin{proof}
Since all the matrix entries are real (Remark \ref{1.7}), the claim means that
the series
\begin{equation}\label{f6.4}
\sum_{m,n=0}^\infty(m+\tfrac12)^{-1}(n+\tfrac12)^{-1}((K_{z,z'})_{+-} (m,n))^2
\end{equation}
is finite.

To get the squared matrix entry we multiply out two copies of the double
integral representation \eqref{f6.2}, where in the second copy we swap $z$ and
$z'$. This operation does not change the kernel, as it follows from a series
expansion for the kernel (see \cite[(3.3)]{BO4}) and the fact that the
functions entering this expansion depend symmetrically on $z$ and $z'$. On the
other hand, after this operation the $\Ga$--factors in front of the integral
will cancel out (this trick is borrowed from \cite{BO3} and \cite{BO4}, see,
e.g., the proof of Proposition 2.3 in \cite{BO4}). The resulting expression for
the sum \eqref{f6.4} can be written in the form
\begin{multline}\label{f6.5}
\sum_{m,n\in\Z'_+}(m+\tfrac12)^{-1}(n+\tfrac12)^{-1}\\
\times\frac1{16\pi^4}\oint\limits_{\{u_1\}}\oint\limits_{\{u_2\}}
\oint\limits_{\{u'_1\}}\oint\limits_{\{u'_2\}}
F_{mn}(u_1,u_2)F'_{mn}(u'_1,u'_2) du_1d u_2 du'_1 du'_2,
\end{multline}
where
\begin{gather*}
\{u_1\}=\{u'_1\}=[+\infty-i\rho_1,0-,+\infty+i\rho_1], \\
\{u_2\}=\{u'_2\}=[+\infty-i\rho_2,0-,+\infty+i\rho_2]
\end{gather*}
and
\begin{gather*}
F_{mn}(u_1,u_2)= \frac{(-u_1)^{z'+m}(1+u_1)^{-z-m-1}
(-u_2)^{-z'+n}(1+u_2)^{z-n-1}}{u_1-u_2}\,,\\
F'_{mn}(u'_1,u'_2)= \frac{(-u'_1)^{z+m}(1+u'_1)^{-z'-m-1}
(-u'_2)^{-z+n}(1+u'_2)^{z'-n-1}}{u'_1-u'_2}
\end{gather*}
(in the second line $z$ and $z'$ are interchanged).

To show that the sum \eqref{f6.5} is finite we replace the integrand by its
modulus and then interchange summation and integration. Then we get the
integral
\begin{multline}\label{f6.6}
\frac{1}{16\pi^4}\oint\limits_{\{u_1\}}\oint\limits_{\{u_2\}}
\oint\limits_{\{u'_1\}}\oint\limits_{\{u'_2\}}
\sum_{m,n\in\Z'_+}(m+\tfrac12)^{-1}(n+\tfrac12)^{-1}\\
\times\left|F_{mn}(u_1,u_2)F'_{mn}(u'_1,u'_2) du_1d u_2 du'_1 du'_2\right|.
\end{multline}
It suffices to check that it is finite.

We have
\begin{multline*}
\sum_{m,n=0}^\infty(m+\tfrac12)^{-1}(n+\tfrac12)^{-1}\left|F_{mn}(u_1,u_2)F'_{mn}(u'_1,u'_2)\right|
=\left|F_{00}(u_1,u_2)F'_{00}(u'_1,u'_2)\right|\\
\times\sum_{m,n=0}^\infty(m+\tfrac12)^{-1}(n+\tfrac12)^{-1}\left|\frac{u_1u'_1}{u_1+u'_1}\right|^m
\left|\frac{u_2u'_2}{u_2+u'_2}\right|^n.
\end{multline*}
The same argument as in the proof of Proposition \ref{5.2} shows that for the
latter sum there exists the upper bound of the form
$$
\sum_{m,n=0}^\infty(m+\tfrac12)^{-1}(n+\tfrac12)^{-1}\left|\frac{u_1u'_1}{u_1+u'_1}\right|^m
\left|\frac{u_2u'_2}{u_2+u'_2}\right|^n \le\const|u_1|^\de |u'_1|^\de |u_2|^\de
|u'_2|^\de,
$$
where $\de>0$ can be chosen as small as is needed.

Substituting this estimate into the 4--fold integral \eqref{f6.6} leads to its
splitting into the product of two double integrals, one of which is
\begin{equation}\label{f6.7}
\oint\limits_{\{u_1\}}\oint\limits_{\{u_2\}} |u_1|^\de
|u_2|^\de\left|\frac{(-u_1)^{z'}(1+u_1)^{-z-1}
(-u_2)^{-z'}(1+u_2)^{z-1}}{u_1-u_2} du_1d u_2 \right|
\end{equation}
and the other has the similar form, with $z$ and $z'$ interchanged. Therefore,
it suffices to prove the finiteness of the integral \eqref{f6.7}.

Arguing as on step 1 of Proposition \ref{5.1} we reduce this integral to the
real integral
\begin{equation}\label{f6.8}
\iint\limits_{\substack{u_1,u_2\ge C\\ |u_1-u_2|\ge \rho}}
\frac{u_1^{\mu+\de-1}u_2^{-\mu+\de-1}} {|u_1-u_2|}du_1 du_2.
\end{equation}
This integral only slightly differs from the integral \eqref{f6.3} examined on
step 1 of Proposition \ref{6.1}, and the same argument as in Proposition
\ref{6.1} shows that \eqref{f6.8} is finite provided that $\de$ is chosen small
enough.
\end{proof}

\begin{proposition}\label{6.3} As $\xi$ goes to $1$, the matrices
$$
\left[(m+\tfrac12)^{-1/2}(K_{z,z',\xi})_{+-}
(m+\tfrac12,-n-\tfrac12)(n+\tfrac12)^{-1/2}\right]_{m,n\in\Z'_+}
$$
converge to the matrix
$$
\left[(m+\tfrac12)^{-1/2}(K_{z,z'})_{+-}
(m+\tfrac12,-n-\tfrac12)(n+\tfrac12)^{-1/2}\right]_{m,n\in\Z'_+}
$$
in the topology of the Hilbert--Schmidt norm.
\end{proposition}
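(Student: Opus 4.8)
The plan is to use that the Hilbert--Schmidt operators form a Hilbert space, so that \emph{weak convergence together with convergence of the norms implies convergence in norm}. Denote by $A_\xi$ and $A$ the two weighted matrices appearing in the statement, regarded as Hilbert--Schmidt operators (equivalently, as square--summable matrices), and write $\langle\,\cdot\,,\,\cdot\,\rangle_{\mathrm{HS}}$ for the Hilbert--Schmidt inner product. From the identity
\begin{equation*}
\Vert A_\xi-A\Vert_2^2=\Vert A_\xi\Vert_2^2-2\langle A_\xi,A\rangle_{\mathrm{HS}}+\Vert A\Vert_2^2
\end{equation*}
one sees that it suffices to prove two things: \textbf{(a)} $A_\xi\to A$ weakly in the Hilbert--Schmidt space, whence $\langle A_\xi,A\rangle_{\mathrm{HS}}\to\langle A,A\rangle_{\mathrm{HS}}=\Vert A\Vert_2^2$; and \textbf{(b)} $\Vert A_\xi\Vert_2^2\to\Vert A\Vert_2^2$. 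Indeed, (a) and (b) together force $\Vert A_\xi-A\Vert_2^2\to0$.

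Claim (a) is the easy part. By Proposition \ref{5.1} we have $\unK_{z,z',\xi}(x,y)\to\unK_{z,z'}(x,y)$ pointwise, and since the weights $(m+\tfrac12)^{-1/2}$ are fixed, every entry of $A_\xi$ converges to the corresponding entry of $A$. Equivalently, $\langle A_\xi,E\rangle_{\mathrm{HS}}\to\langle A,E\rangle_{\mathrm{HS}}$ for every matrix unit $E$, hence for every finite--rank operator. Since finite--rank operators are dense in the Hilbert--Schmidt space, this pointwise convergence upgrades to weak convergence as soon as we know $\sup_\xi\Vert A_\xi\Vert_2<\infty$; and this uniform bound is a by--product of (b), which gives $\Vert A_\xi\Vert_2^2\to\Vert A\Vert_2^2<\infty$.

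The heart of the matter is (b), which is the exact analog of Proposition \ref{5.4} with the Hilbert--Schmidt norm in place of the trace. Here $\Vert A_\xi\Vert_2^2$ is the sum \eqref{f6.4} with $K_{z,z'}$ replaced by $K_{z,z',\xi}$. Exactly as in the proof of Proposition \ref{6.2}, I would form the square of a matrix entry by multiplying two copies of the representation \eqref{f6.1}, with $z$ and $z'$ interchanged in the second copy, so that the $\Ga$--factors cancel; summing over $m,n$ then turns $\Vert A_\xi\Vert_2^2$ into a fourfold contour integral over two copies of $C(R_1,\epsi\rho_1,\xi)$ and two copies of $C(R_2,\epsi\rho_2,\xi)$, carrying the prefactor $(1-\xi)^2$; the claim is that this converges to the fourfold integral \eqref{f6.5}. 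Following the scheme of Propositions \ref{5.1} and \ref{5.4}, I would split each $\om$--contour into its interior part $C^+$ and its big arc $C^-$. On the product of the four interior parts the change of variables $\om=\om_\xi(u)$ absorbs all powers of $1-\xi$; the residual $\xi$--dependent factor (a fourfold analog of \eqref{f5.5}) tends to $1$ pointwise and is bounded uniformly in $\xi$ and in $m,n$, and the summed geometric series is controlled by the bound $\const\,|u_1|^\de|u_1'|^\de|u_2|^\de|u_2'|^\de$ obtained in Proposition \ref{6.2}. Dominated convergence, with majorant supplied by the absolute convergence established in the proof of Proposition \ref{6.2}, then yields the limit \eqref{f6.5}.

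The main obstacle, as in Section \ref{6} generally, is to show that every contribution in which at least one $\om$--variable ranges over a big arc $C^-$ is killed by the prefactor $(1-\xi)^2$. When the big--arc variable stays away from its partner, the denominators $u_1-u_2$ and $u_1'-u_2'$ are harmless and the estimate of step 4 of Proposition \ref{5.1} applies, the extra summation factor being the small power $|u|^\de$ of Proposition \ref{6.2} rather than a logarithm, which is absorbed by taking $\de$ small. The genuinely delicate case is the one met in step 3 of Proposition \ref{6.1}: when an interior variable on $C^+(R_1,\epsi\rho_1,\xi)$ approaches the big arc on the nested contour $C^-(R_2,\epsi\rho_2,\xi)$ near the point $R_2$, the difference of the two variables is only of order $\epsi$, and the denominator cannot simply be discarded. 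I would handle it exactly as in Proposition \ref{6.1}: dissect the interior contour by a vertical line $\Re\om_1=\const$, discard the denominator and use the step--4 estimate away from $R_2$, and near $R_2$ keep the denominator but use that the singularity $|\om_1-\om_2|^{-1}$ is integrable (of the type $(a^2+b^2)^{-1/2}$ at the origin in the plane), so that the bounded integrand times the prefactor $(1-\xi)^2$ still tends to $0$.
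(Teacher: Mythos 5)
Your proposal is correct and follows essentially the same route as the paper: the paper likewise reduces the statement to entrywise (weak) convergence plus convergence of the squared Hilbert--Schmidt norms, and then establishes the latter by multiplying two copies of the representation \eqref{f6.1} with $z$ and $z'$ swapped, summing over $m,n$ to get a fourfold contour integral converging to \eqref{f6.5}, with the big-arc contributions killed by the prefactor $(1-\xi)^2$ (the paper bounds the summed geometric series there via Lemma \ref{5.3}, yielding a harmless $(\log(\epsi^{-1}))^2$ factor) and the near-$R_2$ degeneration handled exactly as in Proposition \ref{6.1}. Your more explicit justification of the reduction via the Hilbert-space identity and density of finite-rank operators is just a spelled-out version of the paper's one-line argument.
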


\begin{proof} We know that the convergence takes place in the weak operator
topology, that is, for the matrix entries (this follows from Proposition
\ref{5.1}). Consequently, it suffices to prove the convergence of the squared
Hilbert--Schmidt norms:
\begin{multline*}
\sum_{m,n=0}^\infty(m+\tfrac12)^{-1}(n+\tfrac12)^{-1}((K_{z,z',\xi})_{+-}
(m,n))^2\\ \to
\sum_{m,n=0}^\infty(m+\tfrac12)^{-1}(n+\tfrac12)^{-1}((K_{z,z'})_{+-} (m,n))^2
\end{multline*}
(recall that our matrices are real).

We will follow the arguments of Propositions \ref{5.4}, \ref{6.1}, and
\ref{5.1}.

Write the left--hand side in the form similar to \eqref{f6.5}. Namely, let
$F_{mn;\xi}(\om_1,\om_2)$ denote the integrand in \eqref{f6.1}, where we
substitute $x=m+\frac12$ and $y=-n-\frac12$:
\begin{multline*}
 F_{mn;\xi}(\om_1,\om_2)=\left(1-\sqrt{\xi}\om_1\right)^{z'+m}
\left(1-\frac{\sqrt{\xi}}{\om_1}\right)^{-z-m-1}\\
\times\left(1-\sqrt{\xi}\om_2\right)^{-z'+n}
\left(1-\frac{\sqrt{\xi}}{\om_2}\right)^{z-n-1}
\frac{\om_1^{-m-1}\om_2^{-n-1}}{\omega_1-\omega_2}\,,
\end{multline*}
and let $F'_{mn;\xi}(\om'_1,\om'_2)$ be the similar quantity with $z$ and $z'$
interchanged:
\begin{multline*}
F'_{mn;\xi}(\om'_1,\om'_2)=\left(1-\sqrt{\xi}\om_1\right)^{z+m}
\left(1-\frac{\sqrt{\xi}}{\om_1}\right)^{-z'-m-1}\\
\times\left(1-\sqrt{\xi}\om_2\right)^{-z+n}
\left(1-\frac{\sqrt{\xi}}{\om_2}\right)^{z'-n-1}
\frac{(\om'_1)^{-m-1}(\om'_2)^{-n-1}}{\omega'_1-\omega'_2}\,.
\end{multline*}
As above, swapping $z$ and $z'$ kills the $\Ga$--factors. In this notation, the
series in question takes the form
\begin{multline}\label{f6.9}
\frac{1}{16\pi^4}\sum_{m,n\in\Z'_+}(m+\tfrac12)^{-1}(n+\tfrac12)^{-1}\\
\times\oint\limits_{\{\om_1\}}\oint\limits_{\{\om_2\}}
\oint\limits_{\{\om'_1\}}\oint\limits_{\{\om'_2\}} (1-\xi)^2
F_{mn;\xi}(\om_1,\om_2)F'_{mn;\xi}(\om'_1,\om'_2) d\om_1d\om_2d\om'_1d\om'_2\,,
\end{multline}
where
$$
\{\om_1\}=\{\om'_1\}=C(R_1,\epsi\rho_1,\xi), \qquad
\{\om_2\}=\{\om'_2\}=C(R_2,\epsi\rho_2,\xi),
$$
and $R_1>R_2$, $r_1<r_2$, $\epsi=1-\xi$, as before.

First, we truncate all the contours in \eqref{f6.9} keeping only their interior
parts $C^+(R_i,\epsi\rho_i,\xi)$, and then make the change of variables
according to the rule \eqref{f5.4}, as we did before. The prefactor $(1-\xi)^2$
disappears, we compare the resulting integrand with that in \eqref{f6.5}, and
check that their ratio has uniformly bounded modulus and converges pointwise to
1. This is done exactly as in step 3 of the proof of Proposition \ref{5.4}. By
virtue of Proposition \ref{6.3}, we get the desired convergence provided that
the both contours in \eqref{f6.9} are truncated.

Next, we check that the contribution from the remaining parts of the contours
in \eqref{f6.9} is asymptotically negligible due to the prefactor $(1-\xi)^2$.
Again, the proof goes as in the situation of Proposition \ref{5.4}. We replace
the integrand by its modulus, interchange summation and integration, and
evaluate the double sum over $m$ and $n$ using Lemma \ref{5.3}. This produces a
factor growing like $(\log(\epsi^{-1}))^2$, which we add to our small prefactor
$(1-\xi)^2$. Due to this bound, the 4--fold integral reduces to the product of
two double integrals,
$$
\oint\limits_{\{\om_1\}}\oint\limits_{\{\om_2\}}
|F_{00;\xi}(\om_1,\om_2)d\om_1d\om_2| \qquad \text{\rm and} \qquad
\oint\limits_{\{\om'_1\}}\oint\limits_{\{\om'_2\}}
|F'_{00;\xi}(\om'_1,\om'_2)d\om'_1d\om'_2|,
$$
where each of the 4 variables ranges over $C^\pm(R_i,\epsi\rho_i,\xi)$, and for
at least one of them the corresponding superscript has to be ``$-$''.

These integrals were already examined in the proof of Proposition \ref{6.1}.
For each of the integrals, there are two possible cases: Either (a) both
variables range over interior parts of the contours or (b) one of the variables
ranges over the interior part while the other variable ranges over the exterior
part. We know that in case (a) the integral grows as $(1-\xi)^{-1}$, while in
case (b) the growth is suppressed by the small factor $(1-\xi)$, even if one
adds the extra growing factor $(\log(\epsi^{-1}))^2$. Since case (b) occurs for
at least one of the double integrals, we see that the small prefactor
$(1-\xi)^2$ suffices to make the whole contribution negligible.

This completes the proof.
\end{proof}

\end{document}